\def\ps@pprintTitle{%
 \let\@oddhead\@empty
 \let\@evenhead\@empty
 \def\@oddfoot{\footnotesize\itshape
       Published Open Access in Journal of Differential Equations \hfill \today}%
 \let\@evenfoot\@oddfoot}
\theoremstyle{plain}
\newtheorem{thm}{\protect\theoremname}
\theoremstyle{plain}
\newtheorem{prop}[thm]{\protect\propositionname}
\theoremstyle{remark}
\newtheorem{rem}[thm]{\protect\remarkname}
\theoremstyle{plain}
\newtheorem{lem}[thm]{\protect\lemmaname}
\theoremstyle{definition}
\newtheorem{defn}[thm]{\protect\definitionname}
\theoremstyle{definition}
\newtheorem{example}[thm]{\protect\examplename}
\DeclareFontFamily{OML}{cyr}{}
\DeclareFontShape{OML}{cyr}{m}{n}{
   <5> <6> <7> <8> <9> gen * wncyr
   <10> <10.95> <12> <14.4> <17.28> <20.74> <24.88> wncyr10
  }{}
\DeclareSymbolFont{rusletters}{OML}{cyr}{m}{n}
\DeclareSymbolFontAlphabet{\rusmath}{rusletters}
\DeclareMathSymbol\re{\rusmath}{rusletters}{"03}
\providecommand{\definitionname}{Definition}
\providecommand{\examplename}{Example}
\providecommand{\lemmaname}{Lemma}
\providecommand{\propositionname}{Proposition}
\providecommand{\remarkname}{Remark}
\providecommand{\theoremname}{Theorem}
\begin{document}

\begin{frontmatter}

\title{\vspace{-2cm} 
    {\footnotesize \textnormal{Published Open Access in \textit{Journal of Differential Equations}. 
    \href{https://doi.org/10.1016/j.jde.2026.114334}{Download the official version here.}}} \\ [1ex]
    Nonlocal pseudosymmetries and Bäcklund transformations as $\mathcal{C}$-morphisms}
    
\author[rvt]{Diego Catalano Ferraioli\fnref{fn1}\corref{cor1}}
\ead{diego.catalano@ufba.br}
\author[focal]{Tarcísio Castro Silva\fnref{fn2}}
\fntext[fn1]{Partially supported by CNPq grant 405427/2025-5.}
\fntext[fn2]{Partially supported by CNPq grant 405427/2025-5 and by FEMAT.}
\cortext[cor1]{Corresponding author}
\address[rvt]{Department of Mathematics, Universidade Federal da Bahia, Campus
de Ondina, Av. Milton Santos, S/N, Ondina - CEP 40.170.110 - Salvador,
BA - Brazil, e-mail: diego.catalano@ufba.br.}
\address[focal]{Department of Mathematics, Universidade de Brasilia, Brasilia DF
70910-900, Brazil, e-mail: tarcisiocastro@unb.br. }
\begin{abstract}
In this paper, we show how factorisation with respect to nonlocal
pseudosymmetries enables us to obtain Bäcklund transformations, which
are described as nonlocal $\mathcal{C}$-morphisms of differential
equations. It follows that, in this approach, Bäcklund transformations
are determined by basic invariants of the exploited nonlocal pseudosymmetries.
As illustrated in several representative examples, including one involving
a new integrable equation, our resulting framework enables a very
general structural approach, distinct from the case-by-case ones often
adopted in the literature---especially in the analysis of cases where
the sought Bäcklund transformations also affect the independent variables.
\end{abstract}
\begin{keyword}
Bäcklund transformations \sep Nonlocal pseudosymmetries
\sep  $\mathcal{C}$-morphisms \sep Differentiable coverings \sep Jet spaces \sep Geometry of PDEs  
\MSC[2020] 58J70 \sep 58J72 \sep 35A30  \sep 58A20  \sep  35B06 \sep 58A15 \sep 58A17 \sep 58A30

\end{keyword}
\end{frontmatter}{}

\medskip{}

\section{Introduction}
\label{sec1}

B\"{a}cklund transformations, although originally arising within the realm
of classical differential geometry, have come to play a significant role
in the modern analysis of nonlinear phenomena. Indeed, this type of transformation
is named after the mathematician traditionally associated with a theorem
that studies the transformation of surfaces with constant negative curvature
in $\mathbb{R}^{3}$, realizing them as focal surfaces of a pseudospherical
congruence of lines. In practice, starting from a surface with constant
negative Gaussian curvature, B\"{a}cklund's theorem allows for the construction
of a one-parameter family of surfaces by integrating only a system of ordinary
differential equations (ODEs). Since such surfaces correspond, from an
analytical point of view, to solutions of the sine-Gordon equation
$z_{xt}=\sin z$, B\"{a}cklund's theorem provides a method for generating
new solutions of the sine-Gordon equation from a given one, again through
the integration of a system of ODEs. In doing so, this transformation acts
as an automorphism of the sine-Gordon solution space --- a property that
justifies the term auto-B\"{a}cklund transformation.

In its modern formulation, an auto-B\"{a}cklund transformation for a differential
equation $\mathcal{E}$ refers to a map that associates, to any given solution
of $\mathcal{E}$, a family of new solutions of $\mathcal{E}$, typically
by integrating a system of ODEs. Then, more generally, a B\"{a}cklund transformation
is understood as a transformation that, via the integration of a system
of ordinary differential equations, maps solutions of one differential
equation $\mathcal{E}$ to solutions of another equation
$\mathcal{E}'$.

Given a nonlinear differential equation, the problem of determining the
existence of its auto-B\"{a}cklund transformations---or, more generally,
of B\"{a}cklund transformations between this equation and other equations
of interest---is an important problem that has attracted the attention
of many mathematicians (see, for instance
\cite{And-Ibr,Lab74,Lamb80,Nimmo,Pirani1,Rogers-Schief,Zvyagin} and references
therein). However, it is a difficult problem which is still open in its
full generality, despite having been studied for a long time and, in recent
years, interesting progress having been made in some particular cases (see,
for instance, the works
\cite{AndFel2015,AndFel2016,Clelland2002,Clelland2009,Clelland2018,Yuhao1,Yuhao2}).

On the other hand, a notion closely related to that of a B\"{a}cklund transformation
is that of a differential substitution. In fact, a differential substitution
from an equation $\mathcal{E}$ to an equation $\mathcal{E}'$ is a transformation
that maps solutions of $\mathcal{E}$ to solutions of $\mathcal{E}'$, without
requiring the integration of a system of ordinary differential equations.
Classical examples of differential substitutions include the Cole--Hopf
transformation, which maps solutions of the heat equation to solutions
of Burgers equation, and the Miura transformation, which maps solutions
of the modified Korteweg-de Vries (mKdV) equation to solutions of the Korteweg-de
Vries (KdV) equation.

As with B\"{a}cklund transformations, given a differential equation
$\mathcal{E}$, it is often of great interest to know whether a differential
substitution exists that maps its solutions into those of another equation
$\mathcal{E}'$. Regarding this issue, in the context of evolutionary equations,
Sokolov took a significant step forward in \cite{Sokolov}, highlighting
a particularly important relationship between factorisation via pseudosymmetries
and differential substitutions. However, although Sokolov underscored the
importance of pseudosymmetries through this discovery, these have been
surprisingly underexplored in the literature. Indeed, aside from the works
of Sokolov himself, pseudosymmetries have been little studied in their
full generality. In a simplified form, they have been used in several works
mainly to reduce and integrate ordinary differential equations (see, for
example, \cite{Diego-lambda,Diego-Paola,Diego-Giuseppe,Muriel-Romero} and
the references therein). While for the particular case of integrable pseudosymmetries,
some possible uses have been discussed in the study of differential coverings
\cite{Chet,Chet-2} (see \cite{Kras-Vin} for the notion of differentiable
covering) or as a tool to explore the effectiveness and generating power
of an already known B\"{a}cklund transformation \cite{Marvan}. Therefore,
no previous work on using pseudosymmetries addresses the important problem
of determining B\"{a}cklund transformations.

A particularly convenient approach to B\"{a}cklund transformations is based
on the notion of $\mathcal{C }$-morphism. Indeed, all B\"{a}cklund
transformations described in the literature so far can be treated as
$\mathcal{C }$-morphisms connecting differential equations.
After discussing this approach to B\"{a}cklund transformations, this paper
presents a previously unexplored connection between B\"{a}cklund transformations
and a type of pseudosymmetry, hereafter referred to as nonlocal pseudosymmetries.
Indeed, we demonstrate how factorisation by nonlocal pseudosymmetries enables
the derivation of B\"{a}cklund transformations, determined by the basic
invariants of the exploited nonlocal pseudosymmetries. This is done in
the case of equations admitting zero-curvature representations (ZCRs),
where it is natural to consider the nonlocal pseudosymmetries defined in
a kind of Riccati-type differentiable coverings, which are shown to be
determined by such ZCRs.

As illustrated by the application to several representative examples, including
one involving a novel integrable equation, our framework allows for a very
general structural approach, distinct from the case-by-case ones often
adopted in the literature---especially in the analysis of cases where the
sought B\"{a}cklund transformations also affect the independent variables.

The paper is organised as follows. Section~\ref{sec:Preliminaries} is a section of preliminaries,
devoted to some general aspects of the geometry of differential equations
and a discussion of more specific notions that will play a particularly
significant role in the remainder of the paper. This section is divided
into the following four subsections: the first is devoted to a brief introduction
to differential equations as submanifolds of jet spaces; the second deals
with $\mathcal{C}$-morphisms; the third, after a short review of the notion
of symmetry, provides a discussion of the notion of pseudosymmetry together
with a brief description of how this notion leads to a factorisation procedure;
finally the fourth subsection reviews the theory of differential coverings.
In particular, some significant examples are included in this section to
illustrate the central notions discussed here. Next, starting from ZCRs
of $\mathcal{E}$, in Section~\ref{subsec:Riccati-type-differentiable-cove} we show how to determine Riccati-type differential
coverings of $\mathcal{E}$ and corresponding nonlocal conservation laws.
This type of covering, with the related conservation laws, is particularly
useful for determining possible nonlocal pseudosymmetries of
$\mathcal{E}$. Finally, Section~\ref{sec:last} shows how factorisation using nonlocal
pseudosymmetries of an equation $\mathcal{E}$ can provide B\"{a}cklund
transformations for $\mathcal{E}$. This approach to determining B\"{a}cklund
transformations is illustrated here through several representative examples,
including one involving a novel integrable equation, which demonstrate
not only the method's degree of generality but also its applicability.

Throughout the paper the following main notations and conventions are adopted:
\begin{itemize}
\item[{-}] all objects in the paper, e.g., manifolds, mappings, functions,
vector fields, etc, are supposed to be smooth;
\item[{-}] in addition to vector fields, intended as derivations of the
algebras of smooth functions, we will also consider
\textsl{relative
vector fields} (also referred to in the literature as vector fields along
maps) \cite{BG,JN} that naturally occur when considering projections of
vector fields in a fiber bundle;
\item[{-}] for jet bundles $J^{k}(\pi )$ and $J^{\infty}(\pi )$ of a fiber
bundle $\pi :E\rightarrow M$ (in particular for total derivatives, Cartan
distributions, prolongations, symmetries, horizontal and vertical differentials,
conservation laws, zero-curvature representations, and differential coverings)
we will use the notations and conventions revised in Subsections \ref{subsec:Jet-spaces-and}, \ref{subsec:Symmetries-of-differential} and \ref{subsec:Cov-BT};
\item[{-}] $(x_{i},u_{\sigma}^{j})$ usually denote the
\textit{canonical coordinates} in a $k$-order jet bundle
$J^{k}(\pi )$;
\item[{-}] the algebra of smooth functions on a manifold $N$ is usually
denoted by $C^{\infty}(N)$, whereas $\mathcal{F}_{k}(\pi )$ and
$\mathcal{F}(\pi )$ will denote the algebra of
\textit{smooth functions on} $J^{k}(\pi )$ and $J^{\infty}(\pi )$, respectively;
\item[{-}] by considering a system of differential equations
$\mathcal{E}\subset J^{k}(\pi )$, we usually describe it as
$\mathcal{E}=\{F^{1}(x,u_{\sigma})=0,...,F^{r}(x,u_{\sigma})=0\}$;
\item[{-}] when $\mathcal{E},\mathcal{V}\subset J^{k}(\pi )$ are such that
$\mathcal{V}$ is a $q$-dimensional differentiable extension of
$\mathcal{E}$, i.e., $\mathcal{V}$ defines a $q$-dimensional differentiable
covering
$\mathcal{V}^{(\infty )}\rightarrow \mathcal{E}^{(\infty )}$, we will usually
distinguish the fiber coordinates $\{u^{1},...,u^{m}\}$ of $\pi $ into
the two subsets $\{z^{1},...,z^{m-q}\}$ (also referred to as
\textit{local variables}) and $\{v^{1},...,v^{q}\}$ (also referred to as
\textit{nonlocal variables}), such that
$\mathcal{E}=\{F^{1}(x,z_{\sigma})=0,...,F^{r}(x,z_{\sigma})=0\}$ and
$\mathcal{V}=\{F^{h}(x,z_{\sigma})=0,\,v_{i}^{s}=X_{i}^{s}(x,z_{
\sigma},v)\}$, where $h=1,...,r$, $s=1,...q$ and $i$ enumerates the independent
variables $x_{1},...,x_{n}$;
\item[{-}] for Riccati-type differentiable coverings of an equation
$\mathcal{E}$, we will usually denote nonlocal variables by
$\{\rho ^{j}\}$ and the differentiable extension of $\mathcal{E}$ by
$\mathcal{Y}$;
\item[{-}] by a ``basic system of invariants'', we mean a system of invariants
that generates all invariants through prolongations and functional combinations,
whereas by a ``complete system of invariants'', we mean a system of invariants
that generates all invariants only through functional combinations.
\end{itemize}

\section{Preliminaries}
\label{sec:Preliminaries}

We assume the reader is familiar with the geometric theory of differential
equations. However, to make the main results accessible to a wide range
of readers, we collected here some notations and basic facts of this theory
used throughout the paper. We also introduce lesser-known topics that will
be key in the forthcoming sections. The reader is referred to
\cite{B_KrV,KLV,olver,Stormark} and
\cite{And-Ibr,DiegoLuiz,Dun,Ibragimov,Marvan1,Tesi_Luiz,Sokolov}, as well
as references therein, for further details.

\subsection{Differential equations as submanifolds of jet spaces}
\label{subsec:Jet-spaces-and}

Consider a fiber bundle $\pi :E\rightarrow M$, with $dim\,M=n$ and
$dim\,E=n+m$. Given a smooth (local) section $s$ of $\pi $ at
$a\in M$, for any $k\in \mathbb{N}$, we denote by $[s]_{a}^{k}$ the
$k$-th order jet of $s$ at $a$; by definition $[s]_{a}^{k}$ is the equivalence
class of (local) smooth sections of $\pi $ that are $k$-order tangent (or,
have $k$-fold contact) to $s$ at $a$. The space $J^{k}(\pi )$ of $k$-th
order jets of sections of $\pi $ is naturally equipped with a differentiable
manifold structure, induced by bundle atlas of $\pi $. If
$\{x_{1},...,x_{n}\}$ are local coordinates on $M$ and
$\{u^{1},...,u^{m}\}$ local fiber coordinates of $\pi $, the induced
\textit{canonical coordinates} on $J^{k}(\pi )$ will be denoted by
$\{x_{i},u_{\sigma}^{j}\}$, where $i\in \{1,...,n\}$,
$j\in \{1,...,m\}$ and $\sigma =(\sigma _{1},...,\sigma _{n})$ is a multi-index
of order $|\sigma |=\sigma _{1}+...+\sigma _{n}$ such that
$0\leq |\sigma |\leq k$; by definition, if $\theta =[s]_{a}^{k}$, then
$x_{i}(\theta ):=x_{i}(a)$ and
$u_{\sigma}^{j}(\theta ):=
\frac{\partial ^{|\sigma |}s^{j}}{\partial x_{1}^{\sigma _{1}}...\partial x_{n}^{\sigma _{n}}}(a)$.
The manifold $J^{k}(\pi )$ is usually referred to as the $k$\textit{-order
jet bundle} of sections of $\pi $, since for any $k\in \mathbb{N}$ the
natural projection $\pi _{k}:J^{k}(\pi )\rightarrow M$,
$[s]_{a}^{k}\mapsto a$, is a fiber bundle; in canonical coordinates,
$\pi _{k}$ has the form $(x_{i},u_{\sigma}^{j})\mapsto (x_{i})$. In particular,
$J^{0}(\pi )$ can be identified with $E$ and $\pi _{0}$ with $\pi $.

Throughout the paper, by $\mathcal{F}_{k}(\pi )$ we will denote the algebra
of \textit{smooth functions on} $J^{k}(\pi )$, $k\geq 1$. Moreover, to emphasize
that $f\in \mathcal{F}_{k}(\pi )$ depends only on
$(x_{i},u_{\sigma}^{j})$, with $|\sigma |\leq k$, we sometimes use the
notation $f=f(x,u^{(k)})$. Also, for lower order $k$, notations like
$u_{x_{i}}^{j},u_{x_{i}x_{j}}^{j},...$ will usually be preferred to multi-index
notation in concrete computations.

Now, since for any $h>k$ the \textit{natural projections}
$\pi _{h,k}:J^{h}(\pi )\rightarrow J^{k}(\pi )$,
$[s]_{a}^{h}\rightarrow [s]_{a}^{k}$, are fiber bundles, one can also define
the \textit{infinite jet space} $J^{\infty}(\pi )$ as the inverse limit
of the sequence
$M\overset{\pi}{\longleftarrow}J^{0}(\pi )
\overset{\pi _{1,0}}{\longleftarrow}...
\overset{\pi _{k,k-1}}{\longleftarrow}J^{k}(\pi )
\overset{\pi _{k+1,k}}{\longleftarrow}...$. By definition,
$J^{\infty}(\pi )$ is the space of sequences
$\theta =\{\theta _{i}\}_{i\in \mathbb{N}}$ with
$\theta _{i}\in J^{i}(\pi )$ and such that
$\pi _{h,k}(\theta _{h})=\theta _{k}$, for all $h>k$. Despite it is not
a finite dimensional manifold, one can still introduce a differential calculus
on $J^{\infty}(\pi )$ by making use of standard constructions of differential
calculus over commutative algebras \cite{B_KrV,KLV}. For instance, one
can define the algebra $\mathcal{F}(\pi )$ of
\textit{smooth functions on} $J^{\infty}(\pi )$ as the filtered algebra
given by direct limit of the sequence of inclusions
$C^{\infty}(M)\overset{\pi ^{*}}{\longrightarrow}\mathcal{F}_{0}(\pi )
\overset{\pi _{1,0}^{*}}{\longrightarrow}...
\overset{\pi _{k,k-1}^{*}}{\longrightarrow}\mathcal{F}_{k}(\pi )
\overset{\pi _{k+1,k}^{*}}{\longrightarrow}...$. Analogously, one can
define the exterior algebra $\Lambda ^{*}(\pi )$ of
\textit{differential forms on} $J^{\infty}(\pi )$ as the filtered exterior
algebra provided by the direct limit of the sequence of inclusions
$\Lambda ^{*}(M)\overset{\pi ^{*}}{\longrightarrow}\Lambda ^{*}\left (J^{0}(
\pi )\right )\overset{\pi _{1,0}^{*}}{\longrightarrow}...
\overset{\pi _{k,k-1}^{*}}{\longrightarrow}\Lambda ^{*}\left (J^{k}(
\pi )\right )\overset{\pi _{k+1,k}^{*}}{\longrightarrow}...$. Thus, being
direct limits, any smooth function or form on an infinite jet space is
nothing but a smooth function or form on some finite order jet space. Therefore,
the \textit{exterior differential} $d$ naturally extends to differentiable
forms on $J^{\infty}(\pi )$. On the other hand, using
$\mathcal{F}(\pi )$, one can also think about \textit{vector fields} on
$J^{\infty}(\pi )$ as derivations of $\mathcal{F}(\pi )$. Throughout the
paper the $\mathcal{F}(\pi )$-module of vector fields on
$J^{\infty}(\pi )$ will be denoted by $\mathcal{D}(\pi )$. In canonical
coordinates these vector fields can be identified with formal series
$Z=\sum _{i}\alpha _{i}\partial _{x_{i}}+\sum _{\sigma}\sum _{j}
\beta _{\sigma}^{j}\partial _{u_{\sigma}^{j}}$, with
$\alpha _{i},\beta _{\sigma}^{j}\in \mathcal{F}(\pi )$. In particular,
one can say that a vector field $Z$ has \textit{filtration degree}
$r$ when it is the smallest $r\in \mathbb{N}$ such that
$Z(\mathcal{F}_{k}(\pi ))\subseteq \mathcal{F}_{k+r}(\pi )$,
$\forall k\in \mathbb{N}$, $k\geq 1$. Since in general vector fields on
$J^{\infty}(\pi )$ do not have an associated flow, a particularly important
case is that of vector fields with zero filtration degree, because any
such field $Z$ admits a flow that can be seen as an inverse limit of a
sequence of flows on finite order jet spaces. Also the
\textit{Lie derivative} of functions, vector fields or forms on
$J^{\infty}(\pi )$ can be defined in a completely algebraic way. For instance,
the Lie derivative of a function $f\in \mathcal{F}(\pi )$ along a vector
field $Z\in \mathcal{D}(\pi )$ is $L_{Z}(f):=Z(f)$, and the Lie derivative
of $Y\in \mathcal{D}(\pi )$ along $Z$ is
$L_{Z}Y:=\left [Z,Y\right ]=Z\circ Y-Y\circ Z$. Whereas, the Lie derivative
of a form $\omega \in \Lambda ^{*}(\pi )$ along $Z$ is defined as
$L_{Z}\omega :=i_{Z}(d\omega )+d(i_{Z}\omega )$, where $i_{Z}$ denotes
the \textit{insertion operator}
$i_{Z}:\Lambda ^{h}(\pi )\longrightarrow \Lambda ^{h-1}(\pi )$.

Jets spaces are naturally equipped with a tangent distribution which is
referred to as \textit{Cartan distribution}, or contact distribution. Indeed,
if $s$ is a (local) section of $\pi $, then the $k$\textsl{-th
order jet prolongation} $s^{(k)}$ of $s$ is the (local) section of
$\pi _{k}$ defined by $s^{(k)}(a)=[s]_{a}^{k}$, for any $a$ in the domain
of $s$. Then the \textit{Cartan distribution}
$\mathcal{C}^{k}(\pi )=\cup _{\theta \in J^{k}(\pi )}\mathcal{C}_{
\theta}^{k}(\pi )$ on the $k$-th order jet space $J^{k}(\pi )$ can be point-wise
defined by the spans $\mathcal{C}_{\theta}^{k}(\pi )$ of the tangent planes
at $\theta =[s]_{a}^{k}$ to the graphs of $k$-th order jet prolongations
$s'{}^{(k)}$ of sections $s'$ such that $[s']_{a}^{k}=[s]_{a}^{k}$. In
terms of canonical coordinates, the Cartan distribution
$\mathcal{C}^{k}(\pi )$ is described by the annihilator of the Pfaffian
system
$\{\omega _{\sigma}^{j}:\;0\leq |\sigma |\leq k-1,\;j=1,...,m\}$, with
$\omega _{\sigma}^{j}=du_{\sigma}^{j}-\sum _{i}u_{\sigma +1_{i}}^{j}dx_{i}$
denoting the so called Cartan forms. Dually, $\mathcal{C}^{k}(\pi )$ can
also be described as the distribution generated by the system of vector
fields
$\{\partial _{u_{\sigma}^{j}},\,D_{i}^{(k)}:\;|\sigma |=k,\;j=1,...,m,
\;i=1,...,n\}$, with
$D_{i}^{(k)}:=\partial _{x_{i}}+\sum _{|\sigma |\leq k-1}{
\displaystyle u_{\sigma +1_{i}}^{j}\partial _{u_{\sigma}^{j}}}$ denoting
the $k$-th order \textit{truncated total derivative}s.

Then by taking the inverse limit of the sequence of surjections
$\mathcal{C}^{1}(\pi )\overset{\pi _{2,1*}}{\longleftarrow}
\mathcal{C}^{2}(\pi )\overset{\pi _{3,2*}}{\longleftarrow}...
\overset{\pi _{k,k-1*}}{\longleftarrow}\mathcal{C}^{k}(\pi )
\overset{\pi _{k+1,k*}}{\longleftarrow}...$, one defines the
\textit{Cartan distribution} $\mathcal{C}(\pi )$ of
$J^{\infty}(\pi )$. One can see $\mathcal{C}(\pi )$ as the distribution
annihilating all Cartan forms
$\{\omega _{\sigma}^{j}=du_{\sigma}^{j}-\sum _{i}u_{\sigma +1_{i}}^{j}dx_{i}:
\;|\sigma |\geq 0,\;j=1,...,m\}$, or equivalently the distribution generated
by all \textit{total derivative}s
\begin{equation*}
D_{i}:=\partial _{x_{i}}+{\displaystyle \sum _{|\rho |\geq 0}\sum _{j=1}^{m}u_{
\rho +1_{i}}^{j}\partial _{u_{\rho}^{j}}},\qquad i=1,...,n.
\end{equation*}

It is easy to show that integral manifolds $\Sigma $ of Cartan distributions
with \textit{independence condition}
$\text{$\Omega$=}dx_{1}\wedge ...\wedge dx_{n}\neq 0$ (i.e.,
such that $\Omega |_{\Sigma}\neq 0$) are prolongations of sections of
$\pi $ (see for instance \cite{Stormark}).

Geometrically, the solutions of a $k$-th order differential equation (or
system) $\mathcal{E}=\{\mathbf{F}=\mathbf{0}\}\subset J^{k}(\pi )$ are
just sections $s$ of $\pi $ whose $k$-order prolongations $j_{k}(s)$ lay
on $\mathcal{E}$. Under regularity conditions for $\mathbf{F}$, if
$\mathcal{E}$ is a submanifold of $J^{k}(\pi )$, it is naturally equipped
with the \textit{induced Cartan distribution}
$\mathcal{C}^{k}(\mathcal{E}):=\mathcal{C}^{k}(\pi )\cap T\mathcal{E}$
and the solutions of $\mathcal{E}$ are sections of $\pi $ whose $k$-order
prolongation are integral manifolds of
$\mathcal{C}^{k}(\mathcal{E})$.

On the other hand, under further regularity conditions for
$\mathcal{E}$, for any $r\in \mathbb{N}$ one may also consider the
$r$-\textit{th order prolongation}
$\mathcal{E}^{(r)}=\{D_{\mu}\mathbf{F}=\mathbf{0}:\;0\leq |\mu |\leq r
\}$, where
$D_{\mu}:=\left (D_{1}\right )^{\mu _{1}}\circ ...\circ \left (D_{n}
\right )^{\mu _{n}}$. One says that $\mathcal{E}$ is
\textit{formally integrable} if and only if for any $r\in \mathbb{N}$ the
prolongations $\mathcal{E}^{(r)}$ are submanifolds of
$J^{k+r}(\pi )$ and the maps
$\pi _{k+r+1,k+r}:\mathcal{E}^{(r+1)}\rightarrow \mathcal{E}^{(r)}$ are
smooth fiber bundles.

Then the \textit{infinite prolongation} $\mathcal{E}^{(\infty )}$, of a
formally integrable equation $\mathcal{E}$, is defined as the inverse limit
of the sequence of fiber bundles
$\pi _{k+r+1,k+r}:\mathcal{E}^{(r+1)}\rightarrow \mathcal{E}^{(r)}$. Since
each $\mathcal{E}^{(r)}$ is naturally equipped with the induced Cartan
distribution $\mathcal{C}^{k+r}(\mathcal{E}^{(r)})$, also
$\mathcal{E}^{(\infty )}$ is equipped with an induced
\textit{Cartan distribution} $\mathcal{C}(\mathcal{E})$ defined by the inverse
limit of the sequence of surjections
$\pi _{k+r+1,k+r\,*}:\mathcal{C}^{k+r+1}\left (\mathcal{E}^{(r+1)}
\right )\rightarrow \mathcal{C}^{k+r}\left (\mathcal{E}^{(r)}\right )$.
One has that
$\mathcal{E}^{(\infty )}=\{D_{\mu}F=0:\;|\mu |\geq 0\}\subset J^{
\infty}(\pi )$ and
$\mathcal{C}(\mathcal{E})=\left \langle \bar{D}_{1},...,\bar{D}_{n}
\right \rangle =\text{Ann}\left \{ \bar{\omega}_{\sigma}^{j}:\,j=1,...,m,
\quad |\sigma |\geq 0\right \} $, where $\bar{D}_{i}$ and
$\bar{\omega}_{\sigma}^{j}$ are the restrictions to
$\mathcal{E}^{(\infty )}$ of the total derivatives and Cartan forms, respectively.
Moreover, by restricting $\Lambda ^{*}(\pi )$ to
$\mathcal{E}^{(\infty )}$ one gets the exterior algebra
$\Lambda ^{*}(\mathcal{E})$ of \textit{differential forms on}
$\mathcal{E}^{(\infty )}$ and in particular the algebra
$\mathcal{F}(\mathcal{E})$ of \textit{smooth functions on}
$\mathcal{E}^{(\infty )}$.

Now, since $\mathcal{C}(\pi )$ is totally horizontal with respect to the
mapping $\pi _{\infty}:J^{\infty}(\pi )\rightarrow M$, the tangent bundle
$\mathcal{T}\left (\pi \right )$ on $J^{\infty}(\pi )$ decomposes as
$\mathcal{T}\left (\pi \right )=\mathcal{V}(\pi )\oplus \mathcal{C}(
\pi )$, where
$\mathcal{V}(\pi ):=Ker\left (\pi _{\infty}\right )_{*}$. Dually one has
$\Lambda ^{1}(\pi )=\Lambda ^{(1,0)}(\pi )\oplus \Lambda ^{(0,1)}(
\pi )$, where
$\Lambda ^{(1,0)}(\pi ):=Ann\left (\mathcal{V}(\pi )\right )$ and
$\Lambda ^{(0,1)}(\mathcal{\pi}):=Ann\left (\mathcal{C}(\pi )\right )$
are the $\mathcal{F}(\pi )$-modules of horizontal and vertical $1$-forms
on $J^{\infty}(\pi )$ locally generated by $\{dx_{i}\}$ and Cartan forms
$\left \{ \omega _{\sigma}^{j}\right \} $, respectively. More in general,
by considering
$\Lambda ^{(p,q)}(\pi )=\left (\bigwedge ^{p}\Lambda ^{(1,0)}(\pi )
\right )\bigwedge \left (\bigwedge ^{q}\Lambda ^{(0,1)}(\pi )\right )$,
the $\mathcal{F}(\pi )$-module of $r$-forms on $J^{\infty}(\pi )$ decomposes
as
$\Lambda ^{r}\left (\pi \right )=\bigoplus _{p+q=r}\Lambda ^{(p,q)}
\left (\pi \right )$. By definition we set
$\mathcal{F}(\pi )=\Lambda ^{(0,0)}(\pi )$. Accordingly, the exterior differential
splits into the sum $d=d_{H}+d_{V}$ of the horizontal and vertical differentials
$d_{H}:\Lambda ^{(p,q)}(\mathcal{\pi})\rightarrow \Lambda ^{(p+1,q)}(
\pi )$ and
$d_{V}:\Lambda ^{(p,q)}(\pi )\rightarrow \Lambda ^{(p,q+1)}(\pi )$, satisfying
$d_{H}^{2}=d_{V}^{2}=0$ and $d_{H}\circ d_{V}=-d_{V}\circ d_{H}$. In coordinates,
these differentials can be easily computed since they act as graded derivations
on $\Lambda ^{*}(\pi )$ and for any function
$f\in \mathcal{F}(\pi )$ one has $d_{H}f:=\sum _{i}D_{i}f\,dx_{i}$ and
$d_{V}f:=\sum _{\sigma}\sum _{j}
\frac{\partial f}{\partial u_{\sigma}^{j}}\omega _{\sigma}^{j}$.

Analogously, given a formally integrable equation $\mathcal{E}$, since
$\mathcal{C}(\mathcal{E})$ is totally horizontal with respect to the mapping
$\bar{\pi}_{\infty}:\mathcal{E}^{(\infty )}\rightarrow M$, the tangent
bundle $\mathcal{T}\left (\mathcal{E}\right )$ on
$\mathcal{E}^{(\infty )}$ decomposes as
$\mathcal{T}\left (\mathcal{E}\right )=\mathcal{V}(\mathcal{E})
\oplus \mathcal{C}(\mathcal{E})$, where
$\mathcal{V}(\mathcal{E}):=Ker\left (\bar{\pi}_{\infty *}\right )$ is the
vertical bundle on $\mathcal{E}^{(\infty )}$. Hence the
$\mathcal{F}(\mathcal{E})$-modules $\Lambda ^{(1,0)}(\mathcal{E})$ and
$\Lambda ^{(0,1)}(\mathcal{E})$ of horizontal and vertical $1$-forms on
$\mathcal{E}^{(\infty )}$, locally generated by $\{dx^{i}\}$ and restricted
Cartan forms
$\left \{ \bar{\omega}_{\sigma}^{j}:=\left .\omega _{\sigma}^{j}
\right |_{\mathcal{E}^{(\infty )}}\right \} $, can be used to decompose
the $\mathcal{F}(\mathcal{E})$-module of $r$-forms on
$\mathcal{E}^{(\infty )}$ as
$\Lambda ^{r}\left (\mathcal{E}^{(\infty )}\right )=\bigoplus _{p+q=r}
\Lambda ^{(p,q)}\left (\mathcal{E}\right )$; in particular one has
$\mathcal{F}(\mathcal{E})=\Lambda ^{(0,0)}(\mathcal{E})$. Accordingly,
on $\mathcal{E}^{(\infty )}$ the exterior differential $d$ (still denoted
by $d$, for ease of notation) splits into the sum
$d=\bar{d}_{H}+\bar{d}_{V}$ of the horizontal and vertical differentials
$\bar{d}_{H}:\Lambda ^{(p,q)}(\mathcal{E})\rightarrow \Lambda ^{(p+1,q)}(
\mathcal{E})$ and
$\bar{d}_{V}:\Lambda ^{(p,q)}(\mathcal{E})\rightarrow \Lambda ^{(p,q+1)}(
\mathcal{E})$, which satisfy $\bar{d}_{H}^{2}=\bar{d}_{V}^{2}=0$ and
$\bar{d}_{H}\circ \bar{d}_{V}=-\bar{d}_{V}\circ \bar{d}_{H}$. Also in this
case, these differentials can be easily computed in coordinates, since
they act as graded derivations on
$\Lambda ^{*}(\mathcal{E}^{(\infty )})$ and for any function
$f\in \mathcal{F}(\mathcal{E})$ one has that
$\bar{d}_{H}f:=\sum _{i}\bar{D}_{i}f\,dx_{i}$ and
$\bar{d}_{V}f:=\sum _{\sigma}\sum _{j}
\frac{\partial f}{\partial u_{\sigma}^{j}}\bar{\omega}_{\sigma}^{j}$, where
$\bar{D}_{i}$ denote the total derivatives restricted to
$\mathcal{E}^{(\infty )}$. For notational convenience, from now on we will
denote $\Lambda ^{(p,0)}(\pi )$ and $\Lambda ^{(p,0)}(\mathcal{E})$ by
$\bar{\Lambda}^{p}(\pi )$ and $\bar{\Lambda}^{p}(\mathcal{E})$, respectively.

Starting from these algebraic structures in the algebra of differential
forms on $\mathcal{E}$, new constructions and new notions can be introduced
further. For instance, using the horizontal differentials
$\bar{d}_{H}:\overline{\Lambda}^{p}(\mathcal{E})\rightarrow
\overline{\Lambda}^{p+1}(\mathcal{E})$, one can consider the notion of
\textit{conservation law} for $\mathcal{E}$, which is a closed horizontal
form $\mu \in \overline{\Lambda}^{n-1}(\mathcal{E})$, i.e., a horizontal
$(n-1)$-form on $\mathcal{E}^{(\infty )}$ satisfying
$\bar{d}_{H}\mu =0$. On the other hand, since an exact horizontal form
is trivially closed, one can naturally limit itself to consider closed
horizontal $(n-1)$-forms up to exact horizontal forms. Thus, a conservation
law can also be understood as a cohomology class
$[\mu ]\in \overline{H}^{n-1}(\mathcal{E})$, i.e.,
$[\mu ]=\left \{ \mu +\bar{d}_{H}\rho :\,\rho \in \overline{\Lambda}^{n-2}(
\mathcal{E})\right \} $. For instance, when $n=2$, a conservation law is
locally described by an horizontal $1$-form $Adx_{1}+Bdx_{2}$ on
$\mathcal{E}^{(\infty )}$ such that $\bar{D}_{2}A-\bar{D}_{1}B=0$, i.e.,
$D_{2}A-D_{1}B=0$ on $\mathcal{E}^{(\infty )}$.

Moreover, given a matrix Lie algebra $\mathfrak{g}$, one may consider the
exterior algebras $\mathfrak{g}\otimes \Lambda ^{*}(\pi )$ and
$\mathfrak{g}\otimes \Lambda ^{*}(\mathcal{E})$ of $\mathfrak{g}$-valued
forms on $J^{\infty}(\pi )$ and $\mathcal{E}^{(\infty )}$, respectively.
Also, one can consider the graded algebra of $\mathfrak{g}$-valued horizontal
forms on $J^{\infty}(\pi )$ and $\mathcal{E}^{(\infty )}$, that will be
denoted here by
$\mathfrak{g}\otimes \bar{\Lambda}^{*}\left (\pi \right )=\bigoplus _{p}
\mathfrak{g}\otimes \bar{\Lambda}^{p}(\pi )$ and
$\mathfrak{g}\otimes \bar{\Lambda}^{*}\left ({\mathcal{E}}\right )=
\bigoplus _{p}\mathfrak{g}\otimes \bar{\Lambda}^{p}\left ({\mathcal{E}}
\right )$, respectively. By definition, $\mathfrak{g}$-valued horizontal
$p$-forms on $J^{\infty}(\pi )$ (resp., $\mathcal{E}^{(\infty )}$) are
generated by $\mathfrak{g}$-valued $p$-forms $A\omega $, with $A$ a
$\mathfrak{g}$-valued functions on $J^{\infty}(\pi )$ (resp.,
$\mathcal{E}^{(\infty )}$). Then, one may define a bilinear product
$[\;,\;]$ by linearly extending the product
$\left [A_{1}\omega _{1},A_{2}\omega _{2}\right ]:=\left [A_{1},A_{2}
\right ]\omega _{1}\wedge \omega _{2}$, between generators. One can check
that $[\;,\;]$ satisfies the following properties: (i)
$[\rho ,\sigma ]=-(-1)^{rs}[\sigma ,\rho ]$; (ii)
$(-1)^{rt}[\rho ,[\sigma ,\tau ]]+(-1)^{sr}[\sigma ,[\tau ,\rho ]]+(-1)^{ts}[
\tau ,[\rho ,\sigma ]]=0$; (iii)
$d_{H}[\rho ,\sigma ]=[d_{H}\rho ,\sigma ]+(-1)^{r}[\rho ,d_{H}
\sigma ]$, analogously for $\bar{d}_{H}$. Where $r,s$ and $t$ are the degrees
of the $\mathfrak{g}$-valued horizontal forms $\rho ,\sigma $ and
$\tau $, respectively. Also, one may define an exterior product
$\wedge $ on
$\mathfrak{g}\otimes \bar{\Lambda}^{*}\left (\pi \right )$, or
$\mathfrak{g}\otimes \bar{\Lambda}^{*}\left (\mathcal{E}\right )$, by linearly
extending the product
$A_{1}\,\omega _{1}\wedge A_{2}\,\omega _{2}=A_{1}A_{2}\,\omega _{1}
\wedge \omega _{2}$.

This allows one to introduce the notion of
\textit{zero-curvature representation} (ZCR) of ${\mathcal{E}}$, that is a
$\mathfrak{g}$-valued non-vanishing $1$-form
$\alpha \in \mathfrak{g}\otimes \bar{\Lambda}^{1}\left ({\mathcal{E}}
\right )$ such that
%
\begin{equation}
\bar{d}_{H}\alpha -\frac{1}{2}\left [\alpha ,\alpha \right ]=0.
\label{eq:ZCR}
\end{equation}
This is a very important notion in the theory of integrable equations with
$2$ independent variables
\cite{Ablowitz-Segur,DiegoLuiz,Dun,Taktajan,Zakharov}. In such a case,
by taking $\alpha =A\,dx_{1}+Bdx_{2}$, condition (\ref{eq:ZCR}) reads
$D_{2}A-D_{1}B+[A,B]=0$ on $\mathcal{E}^{(\infty )}$.

Here we notice that (\ref{eq:ZCR}) can also be equivalently written as
$\bar{d}_{H}\alpha -\alpha \wedge \alpha =0$. Moreover, since
$\mathcal{E}^{(\infty )}\subset J^{\infty}(\pi )$, any element of
$\mathfrak{g}\otimes \bar{\Lambda}^{1}(\mathcal{E})$ can be identified
with an element of $\mathfrak{g}\otimes \bar{\Lambda}^{1}(\pi )$. Hence,
in the outer geometry, (\ref{eq:ZCR}) can also be rewritten as
$d_{H}\alpha -\frac{1}{2}\left [\alpha ,\alpha \right ]=0\;\mbox{mod}
\,\mathcal{E}^{(\infty )}$. In particular, when
$\mathcal{E}=\{F^{j}=0,\,j=1,...,h\}$, under regularity assumptions (i.e.,
if any prolongation $\mathcal{E}^{(h )}$, $h\geq 0$, is totally non-degenerating
\cite{olver}) equation (\ref{eq:ZCR}) can also be rewritten in ``characteristic''
form
$d_{H}\alpha -\frac{1}{2}\left [\alpha ,\alpha \right ]=\sum D_{
\sigma}(F^{j})\gamma _{j}^{\sigma}$, where
$\gamma _{j}^{\sigma}\in \mathfrak{g}\otimes \bar{\Lambda}^{2}(\pi )$.
Hence in general (\ref{eq:ZCR}) holds modulo differential consequences
of $\mathcal{E}$, thus (\ref{eq:ZCR}) is not equivalent to
$\{F^{j}=0,\,j=1,...,h\}$.

\subsection{$\mathcal{C}$-morphisms (or Lie-B\"acklund maps)}
\label{subsec:morphisms}

In this subsection $J^{\infty}(\pi )$ and $J^{\infty}(\pi ')$ will denote
the infinite jet spaces of sections of two fiber bundles
$\pi :E\rightarrow M$ and $\pi ':E'\rightarrow M'$ with
\textit{canonical coordinates} $\{x_{i},u_{\sigma}^{j}\}$ and
$\{x'_{i},u'{}_{\sigma}^{j}\}$, respectively. In particular we assume that
$n=\dim M=\dim M'$, since in the paper we are mainly concerned with this
case.

A $\mathcal{C}$-morphism, also referred to as Lie-B\"{a}cklund transformations
\cite{And-Ibr,KLV}, is a smooth map
$\mathcal{B}:J^{\infty}(\pi )\rightarrow J^{\infty}(\pi ')$, such that
$\mathcal{B}_{*}\mathcal{C}_{\theta}(\pi )\subseteq \mathcal{C}_{
\mathcal{B}(\theta )}(\pi ')$, for any $\theta \in J^{\infty}(\pi )$, i.e.,
%
\begin{equation}
\mathcal{B}_{*}\mathcal{C}(\pi )\subseteq \mathcal{C}(\pi ').
\label{C-invar}
\end{equation}
Notice that
$\mathcal{B}_{*}\mathcal{C}(\pi )\subseteq \mathcal{C}(\pi ')$ is equivalent
to
$\mathcal{B}^{*}\left (\mathcal{I}_{\mathcal{C}(\pi ')}\right )
\subseteq \mathcal{I}_{\mathcal{C}(\pi )}$, where
$\mathcal{I}_{\mathcal{C}(\pi )}$ is the EDS generated by Cartan (or multi-contact)
forms
$\omega _{\sigma}^{j}=du_{\sigma}^{j}-\sum _{i}u_{\sigma +1_{i}}^{j}dx_{i}$
on $J^{\infty}(\pi )$, analogously
$\mathcal{I}_{\mathcal{C}(\pi ')}$.%

Notice that in general these transformations are generalizations of Lie
transformations (i.e., point and contact transformations) that need not
to be diffeomorphisms. Indeed, as shown by B\"{a}cklund
\cite{Backlund2} (see also \cite{And-Ibr, Backlund1,Lie}), only Lie transformations are
invertible. Moreover, by
$\mathcal{B}^{*}\left (\mathcal{I}_{\mathcal{C}(\pi ')}\right )
\subseteq \mathcal{I}_{\mathcal{C}(\pi )}$ it follows that under such a
transformation the image of an integral manifolds of Cartan distribution
is still an integral manifold.

In canonical coordinates, a $\mathcal{C}$-morphism (Lie-B\"{a}cklund transformation)
has the form
%
\begin{equation}
\left \{
\begin{array}{l}
x'_{i}=\xi ^{i}\left (x,u^{(k)}\right ),
\vspace{10pt}
\\
u'{}^{j}=\nu ^{j}\left (x,u^{(k)}\right ),
\vspace{10pt}
\\
\vdots
\\
u'{}_{\sigma}^{j}=\nu _{\sigma}^{j}\left (x,u^{(k+|\sigma |)}\right ),
\qquad |\sigma |\geq 0
\end{array}
\right .
\label{eq:BL_transf}
\end{equation}
where in view of
$\mathcal{B}^{*}\left (\mathcal{I}_{\mathcal{C}(\pi ')}\right )
\subseteq \mathcal{I}_{\mathcal{C}(\pi )}$ the functions $\xi ^{i}$ and
$\nu _{\sigma}^{j}$ are such that
$d\nu _{\sigma}^{j}-\sum _{i}\nu _{\sigma +1_{i}}^{j}d\xi ^{i}=0
\mod{\mathcal{I}_{\mathcal{C}}}$.

Like in the finite order case, integral manifolds $\Sigma $ of
$\mathcal{C}(\pi )$ with independence condition
$\text{$\Omega$=}dx_{1}\wedge ...\wedge dx_{n}\neq 0$ (i.e.,
such that $\Omega |_{\Sigma}\neq 0$) are $\infty $-th order prolongations
of sections of $\pi $. One has the following
%
\begin{prop}
\label{morp_sends_prol}%
A $\mathcal{C}$-morphism maps an integral manifold $\Sigma $ of
$\mathcal{C}(\pi )$ with independence condition
$dx_{1}\wedge ...\wedge dx_{n}\neq 0$ to an integral manifold
$\Sigma '$ of $\mathcal{C}(\pi ')$ with independence condition
$dx'_{1}\wedge ...\wedge dx'_{n}\neq 0$ whenever the regularity assumption
%
\begin{equation}
\det \left (D_{s}\xi ^{i}\right )\neq 0
\label{eq:xi_nondeg}
\end{equation}
is satisfied on $\Sigma $. \end{prop}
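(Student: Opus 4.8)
The plan is to verify directly that the image $\Sigma':=\mathcal{B}(\Sigma)$ is an $n$-dimensional integral submanifold of $\mathcal{C}(\pi')$ on which $dx'_1\wedge\cdots\wedge dx'_n$ does not vanish, with the regularity assumption (\ref{eq:xi_nondeg}) playing exactly the role of guaranteeing that $\mathcal{B}|_{\Sigma}$ is an immersion. First I would record the shape of $\Sigma$. By the characterization recalled just before the statement, an integral manifold of $\mathcal{C}(\pi)$ with $dx_1\wedge\cdots\wedge dx_n\neq0$ is the $\infty$-prolongation $j_\infty(s)(U)$ of a section $s$ over an open $U\subseteq M$, so that $x_1,\ldots,x_n$ restrict to coordinates on $\Sigma$. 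Writing $\iota:\Sigma\hookrightarrow J^{\infty}(\pi)$ for the inclusion, the elementary fact I will exploit throughout is that \emph{on such a prolongation total derivatives restrict to partial derivatives}, i.e. $\iota^{*}(D_s f)=\partial_{x_s}(\iota^{*}f)$ for every $f\in\mathcal{F}(\pi)$ and every $s$; this is just the chain rule applied to $f\circ j_\infty(s)$ together with $u_{\sigma}^{j}\circ j_\infty(s)=\partial^{|\sigma|}s^{j}/\partial x^{\sigma}$.

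Next, set $\phi:=\mathcal{B}\circ\iota:\Sigma\to J^{\infty}(\pi')$, whose image is $\Sigma'$. That $\phi$ annihilates the Cartan forms $\omega'{}_{\sigma}^{j}=du'{}_{\sigma}^{j}-\sum_i u'{}_{\sigma+1_i}^{j}\,dx'_i$ on $J^{\infty}(\pi')$ is immediate from the defining property of a $\mathcal{C}$-morphism: since $\mathcal{B}^{*}\bigl(\mathcal{I}_{\mathcal{C}(\pi')}\bigr)\subset\mathcal{I}_{\mathcal{C}(\pi)}$ while $\iota^{*}$ kills $\mathcal{I}_{\mathcal{C}(\pi)}$ (because $\Sigma$ is an integral manifold), one has $\phi^{*}\omega'{}_{\sigma}^{j}=\iota^{*}\mathcal{B}^{*}\omega'{}_{\sigma}^{j}=0$ for all $j,\sigma$. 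Thus $\phi$ is already an integral map of $\mathcal{C}(\pi')$; what remains is to upgrade this to the statement that $\Sigma'$ is a genuine $n$-dimensional integral submanifold meeting the independence condition.

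This is where (\ref{eq:xi_nondeg}) enters. Using $x_1,\ldots,x_n$ as coordinates on $\Sigma$ and the relation $\phi^{*}dx'_i=\iota^{*}\,d\xi^{i}=d(\iota^{*}\xi^{i})$, I would compute
\[
\phi^{*}\bigl(dx'_1\wedge\cdots\wedge dx'_n\bigr)=\det\!\bigl(\partial_{x_s}(\iota^{*}\xi^{i})\bigr)\,dx_1\wedge\cdots\wedge dx_n=\iota^{*}\!\bigl(\det(D_s\xi^{i})\bigr)\,dx_1\wedge\cdots\wedge dx_n,
\]
the last equality using the total-derivative-to-partial-derivative identity from the first paragraph. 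By assumption (\ref{eq:xi_nondeg}) this $n$-form is nowhere zero on $\Sigma$, hence the covectors $\phi^{*}dx'_1,\ldots,\phi^{*}dx'_n$ are pointwise independent, $d\phi$ has rank $n=\dim\Sigma$ everywhere, and $\phi$ is an immersion; consequently $\Sigma'$ is (locally) an $n$-dimensional submanifold. Factoring $\phi=\iota'\circ\bar\phi$ through the inclusion $\iota':\Sigma'\hookrightarrow J^{\infty}(\pi')$ with $\bar\phi:\Sigma\to\Sigma'$ a local diffeomorphism, injectivity of $\bar\phi^{*}$ transfers $\phi^{*}\omega'{}_{\sigma}^{j}=0$ to $\iota'{}^{*}\omega'{}_{\sigma}^{j}=0$ (so $\Sigma'$ is an integral manifold of $\mathcal{C}(\pi')$) and transfers the nonvanishing above to $\iota'{}^{*}\bigl(dx'_1\wedge\cdots\wedge dx'_n\bigr)\neq0$ (the required independence condition).

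The only genuinely delicate point is that a $\mathcal{C}$-morphism need not be injective, so a priori $\mathcal{B}(\Sigma)$ might collapse and fail to be an $n$-dimensional manifold; the determinant condition (\ref{eq:xi_nondeg}) is precisely the hypothesis that excludes this degeneration by forcing $\mathcal{B}|_{\Sigma}$ to be an immersion. Everything else is the formal bookkeeping of pulling back the Cartan exterior differential system.
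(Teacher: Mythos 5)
Your proposal is correct and follows essentially the same route as the paper: both arguments pull back the Cartan forms $\omega'{}_{\sigma}^{j}$ through $\mathcal{B}$ restricted to the prolongation $\Sigma$ to get the integral-manifold property, and both identify $\det\left(D_{s}\xi^{i}\right)$ as the Jacobian controlling the independence condition $dx'_{1}\wedge\ldots\wedge dx'_{n}\neq0$. The only (cosmetic) difference is in the final packaging: you conclude via an immersion/rank argument that $\Sigma'$ is an $n$-dimensional integral submanifold satisfying the independence condition, whereas the paper uses the same determinant to invert $x'=\xi(x,u^{(k)}(x))$ and exhibits $\Sigma'$ explicitly as the infinite prolongation of a concrete section $\mathfrak{s}'$ of $\pi'$.
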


\begin{proof}
Indeed, being $\Sigma $ the infinite prolongation of a section
$\mathfrak{s}$ locally described as $\mathfrak{s}(x)=(x,u(x))$, one has
$\left (\mathcal{B}\circ \mathfrak{s}^{(\infty )}\right )^{*}(\omega '{}_{
\sigma}^{j})=
\allowbreak
\left (\mathfrak{s}^{(\infty )}\right )^{*}\circ \mathcal{B}^{*}
\left (\omega '{}_{\sigma}^{j}\right )
\allowbreak
=0\mod{\mathcal{I}_{\mathcal{C}(\pi )}}$, since the graph of
$\mathfrak{s}^{(\infty )}$ is an integral manifold of
$\mathcal{I}_{\mathcal{C}(\pi )}$. Thus the graph $\Sigma '$ of
$\mathcal{B}\circ \mathfrak{s}^{(\infty )}$ describes another integral manifold
of $\mathcal{I}_{\mathcal{C}(\pi ')}$. On the other hand whenever
$\det \left (D_{s}\xi ^{i}\right )\neq 0$ along the graph of
$\mathfrak{s}^{(k)}$ (i.e.,
$\mathfrak{s}^{(k)}{}^{*}\left (\det \left (D_{s}\xi ^{i}\right )
\right )\neq 0$), the first $n$-equations
$\{x'_{i}=\xi ^{i}(x,u^{(k)}(x)),\,i=1,...,n\}$ of t{(\ref{eq:BL_transf})}
restricted to $\mathfrak{s}^{(k)}$ can be locally solved with respect to
$(x_{1},...,x_{n})$. Thus, there exists a local diffeomorphism
$x'=\mathcal{B}_{\mathfrak{s}}(x)=\left (\xi \circ \mathfrak{s}^{(k)}
\right )(x)$ that allows one to pass from the parametrization
$\left (\mathcal{B}\circ \mathfrak{s}^{(\infty )}\right )(x)$ of
$\Sigma ' $ to the new parametrization $\left (\mathcal{B}
\circ \mathfrak{s}^{(\infty )}\circ \mathcal{B}_{\mathfrak{s}}^{-1}
\right )(x')$.

Then, since
\begin{equation*}
\begin{array}{l@{\,}l}
\left (\mathcal{B}\circ \mathfrak{s}^{(\infty )}\circ \mathcal{B}_{
\mathfrak{s}}^{-1}\right )^{*}\left (dx'_{1}\wedge ...\wedge dx'_{n}
\right ) & =\left(\left (\mathcal{B}_{\mathfrak{s}}^{-1}\right )^{*}\circ s^{(
\infty )\,*}\circ \mathcal{B}^{*}\right)\left (dx'_{1}\wedge ...\wedge dx'_{n}
\right )
\vspace{5pt}
\\
& =\left(\left (\mathcal{B}_{\mathfrak{s}}^{-1}\right )^{*}\circ s^{(
\infty )\,*}\right) \left (d\left (\xi ^{1}(x,u^{(k)})\right )\wedge ...
\wedge d\left (\xi ^{n}\left (x,u^{(k)}\right )\right )\right )
\vspace{5pt}
\\
& =\left (\mathcal{B}_{\mathfrak{s}}^{-1}\right )^{*} \left (d
\left (\xi ^{1}\circ s^{(k)}(x)\right )\wedge ...\wedge d\left (\xi ^{n}
\circ s^{(k)}(x)\right )\right )
\vspace{5pt}
\\
& =dx'_{1}\wedge ...\wedge dx'_{n},
\end{array}
\end{equation*}
it turns out that
$\left (\mathcal{B}\circ \mathfrak{s}^{(\infty )}\circ \mathcal{B}_{
\mathfrak{s}}^{-1}\right )(x')$ is the infinite prolongation of the section
of $\pi '$
\begin{equation*}
\mathfrak{s}'(x')=\left (\mathcal{B}^{(0)}\circ \mathfrak{s}^{(k)}
\circ \mathcal{B}_{\mathfrak{s}}^{-1}\right )(x')=\left (x',\nu
\left (x,u^{(k)}\left (\mathcal{B}_{\mathfrak{s}}^{-1}(x')\right )
\right )\right ).\qedhere
\end{equation*}
\end{proof}
%
\begin{rem}
\label{rem:morph_prol}
t{Proposition~\ref{morp_sends_prol}} entails that a $\mathcal{C}$-morphism
sends infinite prolongations of (local) sections to infinite prolongations
of (local) sections, whenever t{(\ref{eq:xi_nondeg})} is satisfied. Indeed,
it is noteworthy to stress that the action of such a $\mathcal{C}$-morphism
$\mathcal{B}$ on the infinite prolongation
$\mathfrak{s}^{(\infty )}(x)$ of a (local) section $\mathfrak{s}(x)$ is
not necessarily defined for any $x$, because
$\det \left (D_{s}\xi ^{i}\right )(\mathfrak{s}^{(\infty )}(x))$ could
be zero at some points. Also, we stress that t{(\ref{eq:xi_nondeg})} entails
that the push-forward
$\mathcal{B}_{*\theta}:\mathcal{C}_{\theta}(\pi )\rightarrow
\mathcal{C}_{\mathcal{B}(\theta )}(\pi ')$ establishes an isomorphism for
any $\theta \in J^{\infty}(\pi )$.
\end{rem}

We will refer to a $\mathcal{C}$-morphism t{(\ref{eq:BL_transf})} satisfying
regularity assumption t{(\ref{eq:xi_nondeg})} as a
\textit{regular $\mathcal{C}$-morphism}. One also have the following
%
\begin{prop}
\label{prop:3}%
A regular $\mathcal{C}$-morphism t{(\ref{eq:BL_transf})} is completely determined
by its lower components
\begin{equation*}
\left \{
\begin{array}{l}
x'_{i}=\xi ^{i}\left (x,u^{(k)}\right ),
\vspace{10pt}
\\
u'{}^{j}=\nu ^{j}\left (x,u^{(k)}\right ),
\end{array}
\right .
\end{equation*}
through the prolongation formulas
%
\begin{equation}
\left (
\begin{array}{l@{\quad}l@{\quad}l}
\nu _{\sigma +1_{1}}^{1} & \cdots & \nu _{\sigma +1_{1}}^{m}
\\
\vdots & & \vdots
\\
\nu _{\sigma +1_{n}}^{1} & \cdots & \nu _{\sigma +1_{n}}^{m}
\end{array}
\right )=\left (
\begin{array}{l@{\quad}l@{\quad}l}
D_{1}\xi ^{1} & \cdots & D_{1}\xi ^{n}
\\
\vdots & & \vdots
\\
D_{n}\xi ^{1} & \cdots & D_{n}\xi ^{n}
\end{array}
\right )^{-1}\cdot \left (
\begin{array}{l@{\quad}l@{\quad}l}
D_{1}\nu _{\sigma}^{1} & \cdots & D_{1}\nu _{\sigma}^{m}
\\
\vdots & & \vdots
\\
D_{n}\nu _{\sigma}^{1} & \cdots & D_{n}\nu _{\sigma}^{m}
\end{array}
\right ).
\label{Prol_mor}
\end{equation}
\end{prop}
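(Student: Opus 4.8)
The plan is to read the asserted recursion straight off the defining property of a $\mathcal{C}$-morphism and then invert it using the regularity hypothesis. Recall from (\ref{C-invar}) that $\mathcal{B}$ satisfies $\mathcal{B}^{*}\left(\mathcal{I}_{\mathcal{C}(\pi')}\right)\subset\mathcal{I}_{\mathcal{C}(\pi)}$, and that, as observed immediately after (\ref{eq:BL_transf}), this amounts to
\[
\mathcal{B}^{*}(\omega'{}_{\sigma}^{j})=d\nu_{\sigma}^{j}-\sum_{i}\nu_{\sigma+1_{i}}^{j}\,d\xi^{i}\equiv0\mod{\mathcal{I}_{\mathcal{C}(\pi)}},\qquad|\sigma|\geq0.
\]
First I would split each exterior differential into horizontal and vertical parts, $d=d_{H}+d_{V}$. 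Since every vertical differential $d_{V}f=\sum_{\sigma,j}(\partial f/\partial u_{\sigma}^{j})\,\omega_{\sigma}^{j}$ already belongs to $\mathcal{I}_{\mathcal{C}(\pi)}$, the congruence above reduces to the vanishing of a purely horizontal $1$-form,
\[
d_{H}\nu_{\sigma}^{j}-\sum_{i}\nu_{\sigma+1_{i}}^{j}\,d_{H}\xi^{i}=0,
\]
because a horizontal $1$-form lying in $\mathcal{I}_{\mathcal{C}(\pi)}$ is necessarily zero ($\mathcal{I}_{\mathcal{C}(\pi)}$ meets $\Lambda^{1}(\pi)$ exactly in the $\mathcal{F}(\pi)$-span of the vertical Cartan forms, and horizontal and vertical $1$-forms are complementary).

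Next, inserting $d_{H}f=\sum_{s}D_{s}(f)\,dx_{s}$ and equating the coefficient of each $dx_{s}$, I obtain the scalar system
\[
D_{s}\bigl(\nu_{\sigma}^{j}\bigr)=\sum_{i=1}^{n}D_{s}\bigl(\xi^{i}\bigr)\,\nu_{\sigma+1_{i}}^{j},\qquad s=1,\dots,n,
\]
valid for every $j$ and every multi-index $\sigma$. For fixed $j$ and $\sigma$ this is a linear system in the $n$ unknowns $\nu_{\sigma+1_{1}}^{j},\dots,\nu_{\sigma+1_{n}}^{j}$ whose coefficient matrix is precisely $\bigl(D_{s}\xi^{i}\bigr)$, with $s$ the row index and $i$ the column index. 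The regularity assumption (\ref{eq:xi_nondeg}), $\det\bigl(D_{s}\xi^{i}\bigr)\neq0$, makes this matrix invertible; solving for the unknowns and assembling the results over $j=1,\dots,m$ into a single matrix equation reproduces exactly the prolongation formula (\ref{Prol_mor}).

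Finally, I would close the argument by induction on $|\sigma|$. The zeroth-order components $\xi^{i}$ and $\nu^{j}=\nu_{0}^{j}$ are the given lower data; applying (\ref{Prol_mor}) with $|\sigma|=0$ expresses each first-order component $\nu_{1_{i}}^{j}$ through total derivatives of these data, feeding these back into (\ref{Prol_mor}) determines the second-order components, and so on, so that every $\nu_{\sigma}^{j}$ is recovered from $\xi^{i}$ and $\nu^{j}$ alone. I do not expect a genuine obstacle: the only points requiring care are the reduction modulo $\mathcal{I}_{\mathcal{C}(\pi)}$ and the bookkeeping of indices, so that the coefficient matrix comes out as $\bigl(D_{s}\xi^{i}\bigr)$ and not its transpose. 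In particular one need not check any compatibility of the recursion (for instance, that $\nu_{\sigma+1_{i}+1_{l}}^{j}$ is independent of the order in which the indices $i$ and $l$ are adjoined), since $\mathcal{B}$ is assumed from the outset to be a bona fide $\mathcal{C}$-morphism; the claim is one of \emph{determination} of the higher components by the lower ones, not of their existence.
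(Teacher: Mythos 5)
Your proposal is correct and follows essentially the same route as the paper's proof: pull back the Cartan forms, split $d=d_{H}+d_{V}$, observe that the vertical part lies in $\mathcal{I}_{\mathcal{C}(\pi)}$ automatically so that the condition reduces to the vanishing of the horizontal coefficients $D_{s}\nu_{\sigma}^{j}-\sum_{i}\nu_{\sigma+1_{i}}^{j}D_{s}\xi^{i}=0$, and invert the matrix $\left(D_{s}\xi^{i}\right)$ using (\ref{eq:xi_nondeg}). Your added remarks on why the horizontal part must vanish identically and on the induction over $|\sigma|$ are sound refinements of the same argument.
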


\begin{proof}
Indeed, in view of the decomposition $d=d_{H}+d_{V}$, one readily gets
that
\begin{equation*}
\begin{array}{l@{\,}l}
\mathcal{B}^{*}\left (\omega '{}_{\sigma}^{j}\right ) & =\mathcal{B}^{*}
\left (du'{}_{\sigma}^{j}-\sum _{i}u'{}_{\sigma +1_{i}}^{j}dx'_{i}
\right )=d\nu _{\sigma}^{j}-\sum _{i}\nu _{\sigma +1_{i}}^{j}d\xi ^{i}
\vspace{5pt}
\\
& =d_{H}\nu _{\sigma}^{j}-\sum _{i}\nu _{\sigma +1_{i}}^{j}d_{H}\xi ^{i}+d_{V}
\nu _{\sigma}^{j}-\sum _{i}\nu _{\sigma +1_{i}}^{j}d_{V}\xi ^{i}
\vspace{5pt}
\\
& =\sum _{s}\left (D_{s}\nu _{\sigma}^{j}-\sum _{i}\nu _{\sigma +1_{i}}^{j}D_{s}
\xi ^{i}\right )dx_{s}\mod{\mathcal{I}_{\mathcal{C}(\pi )},}
\end{array}
\end{equation*}
in view of the identity
$d_{V}\nu _{\sigma}^{j}-\sum _{i}\nu _{\sigma +1_{i}}^{j}d_{V}\xi ^{i}=0
\mod{\mathcal{I}_{\mathcal{C}(\pi )}}$. Thus
$\mathcal{B}^{*}\left (\mathcal{I}_{\mathcal{C}(\pi ')}\right )
\subseteq \mathcal{I}_{\mathcal{C}(\pi )}$ if and only if
$D_{s}\nu _{\sigma}^{j}-\sum _{i}\nu _{\sigma +1_{i}}^{j}D_{s}\xi ^{i}=0$,
that is
\begin{equation*}
\left (
\begin{array}{l@{\quad}l@{\quad}l}
D_{1}\xi ^{1} & \cdots & D_{1}\xi ^{n}
\\
\vdots & & \vdots
\\
D_{n}\xi ^{1} & \cdots & D_{n}\xi ^{n}
\end{array}
\right )\cdot \left (
\begin{array}{l@{\quad}l@{\quad}l}
\nu _{\sigma +1_{1}}^{1} & \cdots & \nu _{\sigma +1_{1}}^{m}
\\
\vdots & & \vdots
\\
\nu _{\sigma +1_{n}}^{1} & \cdots & \nu _{\sigma +1_{n}}^{m}
\end{array}
\right )=\left (
\begin{array}{l@{\quad}l@{\quad}l}
D_{1}\nu _{\sigma}^{1} & \cdots & D_{1}\nu _{\sigma}^{m}
\\
\vdots & & \vdots
\\
D_{n}\nu _{\sigma}^{1} & \cdots & D_{n}\nu _{\sigma}^{m}
\end{array}
\right ).
\end{equation*}
Hence, in view of t{(\ref{eq:xi_nondeg})} one gets t{(\ref{Prol_mor})}.
\end{proof}
  Moreover, in view of t{(\ref{C-invar})}, one also has the following
%
\begin{lem}
\label{lem4}
For any $\mathcal{C}$-morphism
$\mathcal{B}:J^{\infty}(\pi )\rightarrow J^{\infty}(\pi ')$ one has
%
\begin{equation}
D_{i}\circ \mathcal{B}^{*}=\sum _{j}\alpha _{ij}\mathcal{B}^{*}\circ D_{j}',
\label{eq:morf_com1}
\end{equation}
for some smooth functions $\alpha _{ij}$ on $J^{\infty}(\pi )$. In particular,
if $\mathcal{B}$ is regular, one has
$\det \left (\alpha _{ij}\right )\neq 0$ and
%
\begin{equation}
\mathcal{B}^{*}\circ D_{j}'=\sum _{i}\alpha ^{ij}D_{i}\circ
\mathcal{B}^{*},
\label{eq:morf_com2}
\end{equation}
with $(\alpha ^{ij})=(\alpha _{ij})^{-1}$.
\end{lem}

\begin{proof}
The invariance condition t{(\ref{C-invar})} is equivalent to say that for
any $\theta \in J^{\infty}(\pi )$ one has
$\mathcal{B}_{*}(\left .D_{i}\right |_{\theta})=\sum _{j}a_{ij}\left .D_{j}
\right |_{\mathcal{B}(\theta )}$, where $a_{ij}$ are some constants. Hence,
for any function $f$ on $J^{\infty}(\pi ')$ one has that
$D_{i}\left (\mathcal{B}^{*}(f)\right )(\theta )=\sum _{j}a_{ij}D_{j}f
\,(\mathcal{B}(\theta ))=\sum _{j}a_{ij}\mathcal{B}^{*}\left (D_{j}f
\right )(\theta )$. Then, since this holds for any $\theta $ one readily
gets t{(\ref{eq:morf_com1})}.

Now, applying t{(\ref{eq:morf_com1})} to $f=x'_{s}$ and using the fact that
$\mathcal{B}^{*}\left (x'_{s}\right )=\xi ^{s}\left (x,u^{(k)}\right )$
and
$\mathcal{B}^{*}\left (D_{j}'\,x'_{s}\right )=\mathcal{B}^{*}\delta _{js}=
\delta _{js}$, one readily gets $D_{i}\xi ^{s}=\alpha _{is}$ and hence
t{(\ref{eq:morf_com2})} readily follows by t{(\ref{eq:xi_nondeg})} and t{(\ref{eq:morf_com1})}.
\end{proof}
We will use the following
%
\begin{defn}
\label{defn5}
Let $\mathcal{E}^{(\infty )}$ and $\mathcal{E}'{}^{(\infty )}$ be the infinite
prolongations of two formally integrable equations
$\mathcal{E}\subset J^{k}(\pi )$ and
$\mathcal{E}'\subset J^{l}(\pi ')$, respectively. By a (regular)
$\mathcal{C}$-morphism from $\mathcal{E}$ to $\mathcal{E}'$ we mean a regular
$\mathcal{C}$-morphism
$\mathcal{B}:J^{\infty}(\pi )\rightarrow J^{\infty}(\pi ')$ such that
$\mathcal{B}(\mathcal{E}^{(\infty )})\subseteq \mathcal{E}'{}^{(
\infty )}$.
\end{defn}

In view of t{Proposition~\ref{morp_sends_prol}} and t{Remark~\ref{rem:morph_prol}}, a regular $\mathcal{C}$-morphism from
$\mathcal{E}$ to $\mathcal{E}'$ transforms solutions of
$\mathcal{E}$ to solutions of $\mathcal{E}'$. Moreover, one has the following
%
\begin{prop}
\label{prop6}
A regular $\mathcal{C}$-morphism
$\mathcal{B}:J^{\infty}(\pi )\rightarrow J^{\infty}(\pi ')$ is a
$\mathcal{C}$-morphism from $\mathcal{E}=\{\mathbf{F}=\mathbf{0}\}$ to
$\mathcal{E}'=\{\mathbf{F'}=\mathbf{0}\}$ if and only if
%
\begin{equation}
\mathcal{B}^{*}(\mathbf{F'})=0
\mod{\{D{}_{\sigma}\mathbf{F}=\mathbf{0}:|\sigma |\geq 0\}}.
\label{eq:Cmorf_E}
\end{equation}
\end{prop}

\begin{proof}
Since
$\mathcal{E}^{(\infty )}=\{D{}_{\sigma}\mathbf{F}=\mathbf{0}:|\sigma |
\geq 0\}$ and
$\mathcal{E}'{}^{(\infty )}=\{D{}_{\rho}'\mathbf{F}'=\mathbf{0}:|
\rho |\geq 0\}$, the condition
$\mathcal{B}(\mathcal{E}^{(\infty )})\subseteq \mathcal{E}'{}^{(
\infty )}$ is equivalent to
$\mathcal{B}^{*}\left (D'_{\rho}\left (\mathbf{F}'\right )\right )=0
\mod{\mathcal{E}^{(\infty )}}$, for any multi-index $\rho $. Hence, in
view of t{(\ref{eq:morf_com2})},
$\mathcal{B}(\mathcal{E}^{(\infty )})\subseteq \mathcal{E}'{}^{(
\infty )}$ is equivalent to
%
\begin{equation}
D_{\mu}\mathcal{B}^{*}\left (\mathbf{F}'\right )=0
\mod{\{D{}_{\sigma}\mathbf{F}=\mathbf{0}:|\sigma |\geq 0\}},
\label{eq:aux1}
\end{equation}
for any multi-index $\mu $. Thus t{(\ref{eq:aux1})} implies
$\mathcal{B}^{*}(\mathbf{F'})=0
\mod{\{D{}_{\sigma}\mathbf{F}=\mathbf{0}:|\sigma |\geq 0\}}$, for
$\mu =0$. Conversely, by totally deriving
$\mathcal{B}^{*}(\mathbf{F'})=0
\mod{\{D{}_{\sigma}\mathbf{F}=\mathbf{0}:|\sigma |\geq 0\}}$ one gets t{(\ref{eq:aux1})}.
\end{proof}
  It is noteworthy to remark that, being the order of
$\mathcal{B}^{*}(\mathbf{F'})$ always finite, the analysis of condition
t{(\ref{eq:Cmorf_E})} in practice only requires the consideration of a finite
number of differential consequences of $\mathbf{F=0}$.

\begin{example}
\label{exa:Cole-Hopf-I}%
An example of regular $\mathcal{C}$-morphism between two equations is the
Cole-Hopf transformation from the heat equation
$\mathcal{E}=\{u_{t}-u_{xx}=0\}$ to the Burgers equation
$\mathcal{E}'=\{u'_{t}-u'_{xx}-2u'u'_{x}=0\}$, where $u=u(x,t)$ and
$u'=u'(x,t)$. Indeed the Cole-Hopf transformation is defined by
$u'=u_{x}/u$, at the points where $u\neq 0$. In this case, by repeatedly
applying t{(\ref{Prol_mor})}, one can readily check that
\begin{equation*}
u'_{x}=\frac{u_{xx}}{u}-\left (\frac{u_{x}}{u}\right )^{2},\qquad u'_{xx}=
\frac{u_{xxx}}{u}-3\left (\frac{u_{x}}{u}\right )\,\left (
\frac{u_{xx}}{u}\right )+2\left (\frac{u_{x}}{u}\right )^{3},\quad
\ldots \;.
\end{equation*}
On the other hand
\begin{equation*}
u'_{t}=\frac{u_{xt}-u_{t}\,u'}{u}=\frac{u_{xxx}}{u}-\left (
\frac{u_{xx}}{u}\right )\,\left (\frac{u_{x}}{u}\right )
\mod{\mathcal{E}^{(1)}},
\end{equation*}
hence $u'_{t}-u'_{xx}-2u'u'_{x}=0\mod{\mathcal{E}^{(1)}}$.
\end{example}

\begin{example}
\label{exa:Miura-I}%
Another example of regular $\mathcal{C}$-morphism between two equations
is the Miura transformation from the mKdV equation
$\mathcal{E}=\{u_{t}-u_{xxx}+6u^{2}u_{x}=0\}$ to the KdV equation
$\mathcal{E}'=\{u'_{t}-u'_{xxx}-6u'u'_{x}=0\}$, where $u=u(x,t)$ and
$u'=u'(x,t)$. For ease of comparison, here and in t{Example~\ref{exa:Miura-II}}, it will be used the form of KdV adopted in
\cite{Sokolov} where the Miura transformation is defined by
$u'=u_{x}-u^{2}$. In this case, by repeatedly applying t{(\ref{Prol_mor})},
one can readily check that
\begin{eqnarray*}[ll]
u'_{x}=-2\,uu_{x}+u_{xx},\qquad u'_{xx}=-2\,u_{x}^{2}-2\,uu_{xx}+u_{xxx},
\\ u'_{xxx}=-6\,u_{x}u_{xx}-2\,uu_{xxx}+u_{xxxx},\quad \ldots \;.
\end{eqnarray*}
On the other hand
\begin{equation*}
u'_{t}=u_{xt}-2\,uu_{t}=u_{xxxx}-2\,uu_{xxx}-6\,u^{2}u_{xx}-12\,u^{3}u_{x}-12
\,uu_{x}^{2}\mod{\mathcal{E}^{(1)}},
\end{equation*}
hence $u'_{t}-u'_{xxx}-6u'u'_{x}=0\mod{\mathcal{E}^{(1)}}$.
\end{example}

\subsection{Symmetries and pseudosymmetries}
\label{subsec:Symmetries-and-pseudosymmetries}

In this subsection, after reviewing the notion of symmetry of a differential
equation \cite{B_KrV,KLV,olver}, we will give an introduction to pseudosymmetries,
one of its possible generalizations proposed by Sokolov in the paper
\cite{Sokolov}, together with some of its key properties, which will be
particularly important in the forthcoming parts of the paper.

\subsubsection{Symmetries of differential equations}
\label{subsec:Symmetries-of-differential}

Finite symmetries of a smooth distribution
$\mathcal{D}$ on a manifold $N$ are diffeomorphisms
$\psi :N\rightarrow N$ such that
$\psi _{*}\mathcal{D}\subseteq \mathcal{D}$. Analogously, from the infinitesimal
point of view, infinitesimal symmetries of a smooth distribution
$\mathcal{D}$ on a manifold $N$ are smooth vector fields
$Y$ on $N$ such that $L_{Y}\mathcal{D}\subseteq \mathcal{D}$. Hence, the
flow of an infinitesimal symmetry of $\mathcal{D}$ is a $1$-parameter local
group of finite symmetries of $\mathcal{D}$. If $\mathcal{D}$ is generated
by a system of vector fields, i.e., $\mathcal{D}=<X_{1},...,X_{n}>$, the
symmetry condition is equivalent to
$[Y,X_{i}]=\sum _{s}\alpha _{i}^{s}X_{s}$, for any
$i\in \{1,...,n\}$ and some smooth functions $\alpha _{i}^{s}$.

Now, given a $k$-th order equation (or system)
$\mathcal{E}=\{\mathbf{F}=\mathbf{0}\}\subset J^{k}(\pi )$, the
\textit{classical finite symmetries} of $\mathcal{E}$ are finite symmetries
of the distribution $\mathcal{C}^{k}(\pi )$ which leave invariant the submanifold
$\mathcal{E}$. Analogously,
\textit{classical infinitesimal symmetries} of $\mathcal{E}$ are vector
fields on $J^{k}(\pi )$ which are infinitesimal symmetries of
$\mathcal{C}^{k}(\pi )$ and are tangent to $\mathcal{E}$. A finite symmetry
$\psi $ is called \textit{projectable} if
$\psi ^{*}(C^{\infty}(M))\subseteq C^{\infty}(M)$. Analogously, an infinitesimal
symmetry $X$ is called projectable if
$X(C^{\infty}(M))\subseteq C^{\infty}(M)$.

It can be seen (see \cite{B_KrV} for details) that infinitesimal symmetries
$X$ of $\mathcal{C}^{k}(\pi )$ are completely described by a
\textit{generating function} $\varphi =(\varphi ^{j})$, defined as
$\varphi ^{j}:=X\,\lrcorner \,\omega _{0}^{j}$ (where
$\omega _{0}^{j}=du^{j}-\sum _{i}u_{i}^{j}dx_{i}$). This function is always
of first order (i.e., only depend on $x_{i},u^{j},u_{i}^{j}$) and, whenever
$m>1$, it is always of the form
$\varphi ^{j}=\sum _{i}a_{i}(x,u)u_{i}^{j}+b^{j}(x,u)$. In particular,
in canonical coordinates $\{x_{i},u_{\sigma}^{j}\}$, one has
%
\begin{equation}
X=-\sum _{i}\frac{\partial \varphi ^{s}}{\partial u_{i}^{s}}D_{i}^{(k)}+
\sum _{|\sigma |=0}^{k-1}\,\sum _{j=1}^{m}D_{\sigma}^{(k)}\varphi ^{j}
\,\partial _{u_{\sigma}^{j}},
\label{eq:simm_m_arb}
\end{equation}
where
$D_{\sigma}^{(k)}:=\left (D_{1}^{(k)}\right )^{\sigma _{1}}\circ ...
\circ \left (D_{n}^{(k)}\right )^{\sigma _{n}}$ and $s$ is any fixed integer
in $\{1,...,m\}$.

Thus, in view of t{(\ref{eq:simm_m_arb})}, the higher order components of
an infinitesimal symmetry can be obtained from lower order ones by means
of a recurrence formula. In this sense one says that the infinitesimal
symmetries of $\mathcal{C}^{k}(\pi )$ are \textit{prolongations} of lower
order vector fields. For instance, when $m>1$ any infinitesimal symmetry
$X$ of $\mathcal{C}^{k}(\pi )$ is the prolongation of a vector field on
$J^{0}(\pi )$; in such a case, $X$ is usually referred to as a
\textit{point infinitesimal symmetry}. On the other hand, when $m=1$ an
infinitesimal symmetry $X$ of $\mathcal{C}^{k}(\pi )$ is in general the
prolongation of a symmetry of $\mathcal{C}^{1}(\pi )$, that is not necessarily
the prolongation of a vector field on $J^{0}(\pi )$. In particular, when
$m=1$, $X$ is usually referred to as a
\textit{contact infinitesimal symmetry} whenever it is not a point symmetry.

Thus, the computation of infinitesimal classical symmetries $X$ of an equation
$\mathcal{E}\subset J^{k}(\pi )$ reduces to the determination of functions
$\varphi =(\varphi ^{j})=(X\,\lrcorner \,\omega _{0}^{j})$ such that
$X$ is tangent to $\mathcal{E}$. This condition provides typically an overdetermined
linear system of PDEs for $\varphi $ that can be analyzed and explicitly
solved by considering its integrability conditions. The analysis of such
a system is generally more feasible if one uses some symbolic manipulation
package of the type developed in a computer algebra system like Maple.

On the other hand, when $\mathcal{E}$ is formally integrable, a noteworthy
extension of the notion of symmetry is possible. Indeed, one can say that
a vector field $X\in \mathcal{D}(\pi )$ is a
\textit{generalised symmetry} of $\mathcal{E}$ if, and only if, $X$ is a
symmetry of $\mathcal{C}(\pi )$ which is tangent to
$\mathcal{E}^{(\infty )}$. However, since $\mathcal{C}(\pi )$ is Frobenius,
the Lie algebra $\mathcal{DC}(\pi )$ of vector fields inscribed in
$\mathcal{C}(\pi )$ is an ideal of trivial (or characteristic) symmetries,
that one wants preliminarily gauge out from symmetries of
$\mathcal{C}(\pi )$. Thus, denoting by $\text{Sym}(\pi )$ the full Lie algebra
of symmetries of $\mathcal{C}(\pi )$, one can define the Lie algebra of
\textit{higher symmetries of} $\mathcal{C}(\pi )$ as the quotient algebra
$\text{sym}(\pi )=\text{Sym}(\pi )/\mathcal{DC}(\pi )$. Hence, since any
field of $\mathcal{DC}(\pi )$ is already tangent to
$\mathcal{E}^{(\infty )}$, all elements of a coset
$\left [X\right ]\in \text{sym}(\pi )$ (i.e.,
$\left [X\right ]=X\,\mod{\mathcal{DC}(\pi )}$, with
$X\in \text{Sym}(\pi )$) are tangent to $\mathcal{E}^{(\infty )}$, or neither
are they. Thus, since it make sense to say that an element of
$\text{sym}(\pi )$ is tangent or not to $\mathcal{E}^{(\infty )}$, one defines
the Lie algebra of \textit{higher symmetries of} $\mathcal{E}$ as the sub-algebra
$\text{sym}(\mathcal{E})\subset \text{sym}(\pi )$ of higher symmetries of
$\mathcal{C}(\pi )$ that are tangent to $\mathcal{E}^{(\infty )}$.

In coordinates, one can see that any $X\in \text{Sym}(\pi )$ has the form
%
\begin{equation}
X=\sum _{i}a_{i}D_{i}+\sum _{|\sigma |\geq 0}\sum _{j=1}^{m}D_{\sigma}
\varphi ^{j}\,\partial _{u_{\sigma}^{j}},
\label{eq:Sym}
\end{equation}
where $\varphi =(\varphi ^{j})=(X\,\lrcorner \,\omega _{0}^{j})$ and
$a_{i}$ are some differentiable functions. Thus, any higher symmetry
$\left [X\right ]$ can be identified with its vertical (or evolutionary)
representative
%
\begin{equation}
\re _{\varphi}=\sum _{|\sigma |\geq 0}\sum _{j=1}^{m}D_{\sigma}
\varphi ^{j}\,\partial _{u_{\sigma}^{j}},
\label{eq:Evol_sym}
\end{equation}
with generating function $\varphi =(\varphi ^{a})$. Moreover,
$\re _{\varphi}\in \text{sym}(\mathcal{E})$ whenever $\re _{\varphi}$ is
tangent to $\mathcal{E}^{(\infty )}$, i.e., $\varphi $ satisfies the system
of linear differential equations
%
\begin{equation}
\sum _{|\sigma |\geq 0}\sum _{j=1}^{m}D_{\sigma}(\varphi ^{j})\,
\partial _{u_{\sigma}^{j}}\mathbf{F}=0\mod{\mathcal{E}^{(\infty )}}.
\label{eq:eq_sym}
\end{equation}
For any $\varphi $ of fixed order, this is typically an overdetermined
linear system for $\varphi $ that involves only a finite number of differential
consequences of $\mathcal{E}$ and can be explicitly solved by considering
its integrability conditions. As with the symmetry condition for classical
symmetries, the analysis of the linear system t{(\ref{eq:eq_sym})} is generally
more feasible if one uses some symbolic manipulation package of the type
developed in a computer algebra system like Maple.

For further details on the theory of classical and generalised symmetries,
as well as for examples and explicit computations, see
\cite{B_KrV,olver}.

\subsubsection{Pseudosymmetries of differential equations and factorisation}
\label{subsec:Pseudosymmetries}

Pseudosymmetries were introduced by Sokolov in \cite{Sokolov} as a generalization
of infinitesimal symmetries.

In general, given a smooth distribution $\mathcal{D}$, a pseudosymmetry
of $\mathcal{D}$ can be understood as a smooth vector field $Y$ such that
$L_{Y}\mathcal{D}\subseteq \,<Y>+\mathcal{D}$. For instance, for a distribution
$\mathcal{D}=<X_{1},...,X_{n}>$, the pseudosymmetry condition is equivalent
to say that for any $i\in \{1,...,n\}$ one has
$[Y,X_{i}]=\alpha _{i}Y+\sum _{s}\beta _{i}^{s}X_{s}$, for some smooth
functions $\alpha _{i}$ and $\beta _{i}^{s}$.

Sokolov introduced also the more general notion of an $r$-pseudosymmetry
of $\mathcal{D}$, that can be seen as a system $\{Y_{1},...,Y_{r}\}$ of
vector fields such that
$L_{Y_{h}}\mathcal{D}\subseteq \,<Y_{1},...,Y_{r}>+\mathcal{D}$, for any
$h=1,...,r$. Hence, for a distribution
$\mathcal{D}=<X_{1},...,X_{n}>$, one has an $r$-pseudosymmetry
$\{Y_{1},...,Y_{r}\}$ if for any $i\in \{1,...,n\}$ the $r\times 1$ matrix
$\mathbb{Y}=\left [Y_{1},...,Y_{r}\right ]^{T}$ is such that
%
\begin{equation}
\left [\mathbb{Y},X_{i}\right ]=\mathbb{U}_{i}\mathbb{Y}+\mathbb{V}_{i}
\mathbb{X},
\label{eq:multi_pseudosim}
\end{equation}
where
$\left [\mathbb{Y},X_{i}\right ]:=\left [\left [Y_{1},X_{i}\right ],...,
\left [Y_{r},X_{i}\right ]\right ]^{T}$ and
$\mathbb{X}=\left [X_{1},...,X_{n}\right ]^{T}$, whereas
$\mathbb{U}_{i}$ and $\mathbb{V}_{i}$ are some $r\times r$ and
$r\times n$ matrix-valued smooth functions, respectively.

It is well known that the flow of an infinitesimal symmetry always transforms
an integral manifolds of $\mathcal{D}$ to another (possibly different)
integral manifolds. Also, in view of
$[Y,X_{i}]=\sum _{s}\alpha _{i}^{s}X_{s}$, it is readily seen that an infinitesimal
symmetry of $\mathcal{D}$ sends an invariant (or first integral) $I$ of
$\mathcal{D}$ to another invariant $Y(I)$. These properties are in general
not true for pseudosymmetries. However, pseudosymmetries share with symmetries
another important property described by the following
%
\begin{prop}
\label{prop:9}%
Let $\left \{ \xi ^{1},...,\xi ^{n},\nu ^{1},...,\nu ^{p}\right \} $ be
invariants of a pseudosymmetry $Y$ (or, an $r$-pseudosymmetry
$\{Y_{1},...,Y_{r}\}$) of $\mathcal{D}=<X_{1},...,X_{n}>$, such that
\begin{equation*}
\det \left (
\begin{array}{l@{\quad}l@{\quad}l}
X_{1}\left (\xi ^{1}\right ) & \cdots & X_{1}\left (\xi ^{n}\right )
\\
\vdots & & \vdots
\\
X_{n}\left (\xi ^{1}\right ) & \cdots & X_{n}\left (\xi ^{n}\right )
\end{array}
\right )\neq 0.
\end{equation*}
Then the functions $\nu _{i}^{j}$, defined by
%
\begin{equation}
\left (
\begin{array}{l@{\quad}l@{\quad}l}
\nu _{1}^{1} & \cdots & \nu _{1}^{p}
\\
\vdots & & \vdots
\\
\nu _{n}^{1} & \cdots & \nu _{n}^{p}
\end{array}
\right )=\left (
\begin{array}{l@{\quad}l@{\quad}l}
X_{1}\left (\xi ^{1}\right ) & \cdots & X_{1}\left (\xi ^{n}\right )
\\
\vdots & & \vdots
\\
X_{n}\left (\xi ^{1}\right ) & \cdots & X_{n}\left (\xi ^{n}\right )
\end{array}
\right )^{-1}\left (
\begin{array}{l@{\quad}l@{\quad}l}
X_{1}(\nu ^{1}) & \cdots & X_{1}(\nu ^{p})
\\
\vdots & & \vdots
\\
X_{n}(\nu ^{1}) & \cdots & X_{n}(\nu ^{p})
\end{array}
\right ),
\label{eq:Prol_invar_pseudo}
\end{equation}
are invariants of $Y$ (resp., $\{Y_{1},...,Y_{r}\}$).
\end{prop}

\begin{proof}
Consider the matrix identity
\begin{equation*}
\left (
\begin{array}{l@{\quad}l@{\quad}l}
X_{1}\left (\xi ^{1}\right ) & \cdots & X_{1}\left (\xi ^{n}\right )
\\
\vdots & & \vdots
\\
X_{n}\left (\xi ^{1}\right ) & \cdots & X_{n}\left (\xi ^{n}\right )
\end{array}
\right )\left (
\begin{array}{l@{\quad}l@{\quad}l}
\nu _{1}^{1} & \cdots & \nu _{1}^{p}
\\
\vdots & & \vdots
\\
\nu _{n}^{1} & \cdots & \nu _{n}^{p}
\end{array}
\right )=\left (
\begin{array}{l@{\quad}l@{\quad}l}
X_{1}(\nu ^{1}) & \cdots & X_{1}(\nu ^{p})
\\
\vdots & & \vdots
\\
X_{n}(\nu ^{1}) & \cdots & X_{n}(\nu ^{p})
\end{array}
\right ).
\end{equation*}
In order to save space and avoid ambiguity, since we have the parentheses
due to the derivatives along the field X, we will rewrite last identity
as
$\left [X_{i}(\xi ^{h})\right ]\left [\nu _{h}^{j}\right ]=\left [X_{i}(
\nu ^{j})\right ]$, instead of
$\left (X_{i}(\xi ^{h})\right )\left (\nu _{h}^{j}\right )=\left (X_{i}(
\nu ^{j})\right )$. The same will be done for the other matrix identities
that follow. Thus, by taking the Lie derivative of this identity with respect
to $Y$ on gets
\begin{equation*}
\left [Y\left (X_{i}(\xi ^{h})\right )\right ]\left [\nu _{h}^{j}
\right ]+\left [X_{i}(\xi ^{h})\right ]\left [Y(\nu _{h}^{j})\right ]=
\left [Y\left (X_{i}(\nu ^{j})\right )\right ].
\end{equation*}
On the other hand, in view of
$[Y,X_{i}]=\sum _{s}\alpha _{i}^{s}X_{s}+\beta _{i}Y$ and the fact that
$Y(\xi ^{h})=Y(\nu ^{j})=0$, last identity reduces to
\begin{equation*}
\left [\sum _{s}\alpha _{i}^{s}X_{s}(\xi ^{h})\right ]\left [\nu _{h}^{j}
\right ]+\left [X_{i}(\xi ^{h})\right ]\left [Y(\nu _{h}^{j})\right ]=
\left [\sum _{s}\alpha _{i}^{s}X_{s}(\nu ^{j})\right ],
\end{equation*}
or equivalently
\begin{equation*}
\left [\alpha _{i}^{s}\right ]\left [X_{s}(\xi ^{h})\right ]\left [
\nu _{h}^{j}\right ]+\left [X_{i}(\xi ^{h})\right ]\left [Y(\nu _{h}^{j})
\right ]=\left [\alpha _{i}^{s}\right ]\left [X_{s}(\nu ^{j})\right ].
\end{equation*}
Then by
$\left [X_{s}(\xi ^{h})\right ]\left [\nu _{h}^{j}\right ]=\left [X_{s}(
\nu ^{j})\right ]$ one readily gets
$\left [X_{i}(\xi ^{h})\right ]\left [Y(\nu _{h}^{j})\right ]=0$. The result
follows from non-degeneracy condition
$\det \left [X_{i}(\xi ^{h})\right ]\neq 0$. An analogous proof holds in
the case of an $r$-pseudosymmetry $\{Y_{1},...,Y_{r}\}$, since for any
$X_{i}$ and $Y_{h}$ one has
$[Y_{h},X_{i}]=\sum _{s}\alpha _{hi}^{s}X_{s}+\sum _{k}\beta _{hi}^{k}Y_{k}$,
for some functions $a_{hi}^{s}$ and $\beta _{hi}^{k}$.
\end{proof}

Concerning the pseudosymmetries or $r$-pseudosymmetries of
$\mathcal{\mathcal{C}}(\mathcal{E})$ on $\mathcal{E}^{(\infty )}$, it is
convenient to adopt the following
%
\begin{defn}
\label{defn10}
Let $\mathcal{E}=\{\mathbf{F=0}\}\subset J^{k}(\pi )$ be a formally integrable
differential equation. A \textit{pseudosymmetry} of $\mathcal{E}$ is a vector
field $\bar{Y}$ on $\mathcal{E}^{(\infty )}$ that is a pseudosymmetry of
$\mathcal{C}(\mathcal{E})$. Analogously, an \textit{$r$-pseudosymmetry}
of $\mathcal{E}$ is a system of vector fields
$\{\bar{Y}_{1},...,\bar{Y}_{r}\}$ on $\mathcal{E}^{(\infty )}$ that is
an $r$-pseudosymmetry of $\mathcal{C}(\mathcal{E})$.
\end{defn}

In his paper \cite{Sokolov}, Sokolov discussed a method for calculating
pseudosymmetries of $\mathcal{E}$. Here we will describe that method by
means of the following proposition, along with a short proof for the convenience
of our readers.
%
\begin{prop}
\label{prop:pseudoE}%
Let $\mathcal{E}=\{\mathbf{F=0}\}\subset J^{k}(\pi )$, with
$\text{$\mathbf{F}$}=(F^{1},...,F^{p})$, be a formally integrable
differential equation and $\mu =U_{1}dx^{1}+...+U_{n}dx^{n}$ an horizontally
closed $1$-form on $\mathcal{E}^{(\infty )}$, i.e., such that
%
\begin{equation}
D_{j}\,U_{i}-D_{i}\,U_{j}=0\mod{\mathcal{E}^{(\infty )}}.
\label{eq:U_psym}
\end{equation}
Then, the vector field on $J^{\infty}(\pi )$
%
\begin{equation}
Y=\sum _{i}a_{i}\partial _{x_{i}}+\sum _{|\sigma |\geq 0}\sum _{j}b_{
\sigma}^{j}\partial _{u_{\sigma}^{j}}=\sum _{i}a_{i}D_{i}+\sum _{|
\sigma |\geq 0}\sum _{j}\left (D+U\right )_{\sigma}(\varphi ^{j})
\partial _{u_{\sigma}^{j}},
\label{eq:Y_pseudo_int}
\end{equation}
where
\begin{equation*}
\begin{array}{l}
\varphi ^{j}:=Y\,\lrcorner \,\omega _{0}^{j},\qquad \qquad \omega _{0}^{j}=du^{j}-{
\displaystyle \sum _{i}}u_{i}^{j}dx_{i},
\vspace{10pt}
\\
b_{\sigma}^{j}:={\displaystyle \sum _{i}a_{i}\,u_{\sigma +1_{i}}^{j}+
\left (D+U\right )_{\sigma}(\varphi ^{j})},
\vspace{10pt}
\\
\left (D+U\right )_{\sigma}:=\left (D_{1}+U_{1}\right )^{\sigma _{1}}
\circ ...\circ \left (D_{n}+U_{n}\right )^{\sigma _{n}},
\vspace{10pt}
\end{array}
\end{equation*}
restricts on $\mathcal{E}^{(\infty )}$ to a pseudosymmetry $\bar{Y}$ of
$\mathcal{E}$ whenever $Y(F^{h})=0\mod{\mathcal{E}^{(\infty )}}$, i.e.,
%
\begin{equation}
\sum _{|\sigma |\geq 0}\sum _{j}\partial _{u_{\sigma}^{j}}\left (F^{h}
\right )\left (D+U\right )_{\sigma}(\varphi ^{j})=0
\mod{\mathcal{E}^{(\infty )}},
\label{eq:Y_tang}
\end{equation}
for any $h=1,...,p$, which is equivalent to the tangency condition of
$Y$ to $\mathcal{E}^{(\infty )}$.
\end{prop}

\begin{proof}
In view of t{(\ref{eq:Y_pseudo_int})} one gets (for ease of notation, we use
here the Einstein's summation convention)
\begin{equation*}
\begin{array}{l@{\,}l}
\left [D_{s},Y\right ] & =\left [D_{s},a_{i}D_{i}\right ]+\left [D_{s},
\left (D+U\right )_{\sigma}(\varphi ^{j})\partial _{u_{\sigma}^{j}}
\right ]
\vspace{5pt}
\\
& =D_{s}\left (a_{i}\right )\,D_{i}+\left [D_{s}+U_{s},\left (D+U
\right )_{\sigma}(\varphi ^{j})\partial _{u_{\sigma}^{j}}\right ]-
\left [U_{s},\left (D+U\right )_{\sigma}(\varphi ^{j})\partial _{u_{
\sigma}^{j}}\right ].
\end{array}
\end{equation*}
On the other hand, by t{(\ref{eq:U_psym})} one has
\begin{equation*}
\left (D_{s}+U_{s}\right )\circ \left (D+U\right )_{\sigma}=\left (D+U
\right )_{\sigma +1_{s}}\mod{\mathcal{E}^{(\infty )}}.
\end{equation*}
Hence, above identity can be rewritten as
\begin{equation*}
\begin{array}{l@{\,}l}
\left [D_{s},Y\right ] & =D_{s}(a_{i})D_{i}+\left (D+U\right )_{
\sigma +1_{s}}(\varphi ^{j})\partial _{u_{\sigma}^{j}}-\left (D+U
\right )_{\sigma}(\varphi ^{j})\partial _{u_{\sigma -1_{s}}^{j}}
\vspace{5pt}
\\
& \qquad -\left (D+U\right )_{\sigma}(\varphi ^{j})\partial _{u_{
\sigma}^{j}}\circ U_{s}+\left (D+U\right )_{\sigma}(\varphi ^{j})
\partial _{u_{\sigma}^{j}}\circ U_{s}
\vspace{5pt}
\\
& \qquad -U_{s}\circ (Y-a_{i}D_{i})\mod{\mathcal{E}^{(\infty )}},
\end{array}
\end{equation*}
which reduces to
%
\begin{equation}
[D_{s},Y]=\left (D_{s}a_{i}+a_{i}U_{s}\right )D_{i}-U_{s}Y
\mod{\mathcal{E}^{(\infty )}},
\label{eq:aux2}
\end{equation}
in view of arbitrariness of $\sigma $. On the other hand t{(\ref{eq:aux2})}
entails that
\begin{equation*}
\begin{array}{l@{\,}l}
D_{s}\left (Y(D_{\sigma}F^{h})\right )-Y\left (D_{s}(D_{\sigma}F^{h})
\right ) & =\left (D_{s}a_{i}+a_{i}U_{s}\right )\,D_{i}\left (D_{
\sigma}F^{h}\right )-U_{s}\,Y(D_{\sigma}F^{h})
\vspace{5pt}
\\
& =-U_{s}\,Y(D_{\sigma}F^{h})\mod{\mathcal{E}^{(\infty )}},
\end{array}
\end{equation*}
i.e.,
%
\begin{equation}
Y\left (D_{\sigma +1_{s}}F^{h}\right )=U_{s}\,Y(D_{\sigma}F^{h})+D_{s}
\left (Y(D_{\sigma}F^{h})\right )\mod{\mathcal{E}^{(\infty )}},
\label{aux-1}
\end{equation}
for any $\sigma $ and for any $h$. Thus, by repeatedly using t{(\ref{aux-1})}
with increasing $|\sigma |$, conditions
$Y(F^{h})=0\mod{\mathcal{E}^{(\infty )}}$, $h=1,...,p$, guarantee that
$Y\left (D_{\sigma}F^{h}\right )=0\mod{\mathcal{E}^{(\infty )}}$, i.e.,
that $Y$ is tangent to $\mathcal{E}^{(\infty )}$. Therefore, in view of
t{(\ref{eq:aux2})}, one gets that $Y$ restricts on
$\mathcal{E}^{(\infty )}$ to a pseudosymmetry $\bar{Y}$ of
$\mathcal{C}(\mathcal{E})$.
\end{proof}
  Some remarks are in order here. First, in view of t{(\ref{eq:Y_pseudo_int})},
one has recurrence formulas for the components of a pseudosymmetry that
allow one to obtain higher order components from lower order ones. To distinguish
the case of pseudosymmetries from that of symmetries, we will say that
t{(\ref{eq:Y_pseudo_int})} is the \textit{pseudoprolongation}, relatively to
the $1$-form \textit{$\mu $,} of the (possibly relative) vector field
$\sum _{i}a_{i}\partial _{x_{i}}+\sum _{j}b^{j}\partial _{u^{j}}$ on
$J^{0}(\pi )$. In particular, when $\mu =0$ the pseudoprolongation reduces
to standard prolongation t{(\ref{eq:Sym})} and $Y$ is a symmetry of
$\mathcal{C}(\pi )$; thus $\varphi =(\varphi ^{j})$ is the generating function
of a (classical or generalised) symmetry of $\mathcal{E}$ whenever
$\mu =0$ and $Y$ is tangent to $\mathcal{E}^{(\infty )}$. Also, we remark
that for any given $1$-form $\mu $, the system (\ref{eq:Y_tang}) is a linear
system of PDEs for $\varphi =(\varphi ^{j})$ that involves only a finite
number of differential consequences of $\mathcal{E}$, whenever the order
of $\varphi $ is finite. As with the standard symmetry condition, the analysis
of such a system of linear PDEs is generally more feasible if one uses
some symbolic manipulation package of the type developed in a computer
algebra system like Maple. Finally, we note that, as with generalised symmetries,
also pseudosymmetries could be put in an evolutionary form (see
\cite{Chet,Sokolov}). In this article, however, we will not limit ourselves
to considering pseudosymmetries in evolutionary form, because in general
the space of invariants of a pseudosymmetry of $\mathcal{E}$ is not the
same as that of its evolutionary form, and this is a non-negligible fact
when, as in this paper, one is interested in studying factorisation by
pseudosymmetries.

For $r$-pseudosymmetries there is an analogous result illustrated by the
following proposition, which can be found in either the original paper
\cite{Sokolov} or the more recent paper \cite{Chet}.
%
\begin{prop}
\label{prop:pseudoE-1}%
Let $\mathcal{E}=\{\mathbf{F=0}\}\subset J^{k}(\pi )$, with
$\text{$\mathbf{F}$}=(F^{1},...,F^{p})$, be a formally integrable
differential equation and
$\gamma =\mathbb{U}_{1}dx^{1}+...+\mathbb{U}_{n}dx^{n}$ a horizontal
$1$-form with $r\times r$ matrix-valued components satisfying
$d_{H}\gamma +\gamma \wedge \gamma =0\mod{\mathcal{E}^{(\infty )}}$, i.e.,
%
\begin{equation}
D_{j}\,\mathbb{U}_{i}-D_{i}\,\mathbb{U}_{j}-[\mathbb{U}_{i},\mathbb{U}_{j}]=0
\mod{\mathcal{E}^{(\infty )}}.
\label{eq:U_psym-1}
\end{equation}
Then, the $r\times 1$ matrix
$\mathbb{Y}=\left [Y_{1},...,Y_{r}\right ]^{T}$
%
\begin{equation}
\mathbb{Y}=\mathbb{A}\mathbb{D}+\sum _{|\sigma |\geq 0}
\left (D+\mathbb{U}\right )_{\sigma}(\Phi )\partial _{\mathbf{u}_{
\sigma}},
\label{eq:Y_pseudo_int-1}
\end{equation}
where $\mathbb{D}:=(D_{1},...,D_{n})^{T}$,
$\partial _{\mathbf{u}_{\sigma}}=(\partial _{u_{\sigma}^{1}},...,
\partial _{u_{\sigma}^{m}})^{T}$, $\mathbb{A}$ is an $r\times n$ matrix
function, $\Phi =(\varphi _{i}^{j})$ is an $r\times m$ matrix function
with $\varphi _{i}^{j}:=Y_{i}\,\lrcorner \,\omega _{0}^{j}$ and
\begin{equation*}
\left (D+\mathbb{U}\right )_{\sigma}:=\left (D_{1}+\mathbb{U}_{1}\right )^{\sigma _{1}}
\circ ...\circ \left (D_{n}+\mathbb{U}_{n}\right )^{\sigma _{n}},
\end{equation*}
defines on $\mathcal{E}^{(\infty )}$ an $r$-pseudosymmetry
$\{\bar{Y}_{1}=\left .Y_{1}\right |_{\mathcal{E}^{(\infty )}},...,
\bar{Y}_{r}=\left .Y_{r}\right |_{\mathcal{E}^{(\infty )}}\}$ of
$\mathcal{E}$ whenever
$\mathbb{Y}(F^{h})=0\mod{\mathcal{E}^{(\infty )}}$, i.e.,
%
\begin{equation}
\sum _{|\sigma |\geq 0}\partial _{\mathbf{u}_{\sigma}}\left (F^{h}
\right )\left (D+\mathbb{U}\right )_{\sigma}(\Phi )=0
\mod{\mathcal{E}^{(\infty )}},
\label{eq:Y_tang-1}
\end{equation}
for any $h=1,...,p$, which is equivalent to the tangency condition of each
$Y_{1},...,Y_{r}$ to $\mathcal{E}^{(\infty )}$.
\end{prop}

  For $r$-pseudosymmetries apply same considerations made above for pseudosymmetries.
In particular, one can say that the system t{(\ref{eq:Y_pseudo_int-1})} is
the \textit{pseudoprolongation}, relatively to \textit{$\gamma $,} of the
(possibly relative) vector fields
$\sum _{i}a_{hi}\partial _{x_{i}}+\sum _{j}b_{h}^{j}\partial _{u^{j}}$,
$h=1,...,r$ (with
$\varphi _{h}^{j}=b_{h}^{j}-\sum _{i}a_{hi}u_{i}^{j})$ on
$J^{0}(\pi )$.

In general one has no guarantee that the system of ``derived'' (or prolonged)
invariants $\{\xi ^{i},\nu ^{j},\nu _{i}^{j}\}$ (provided by t{Proposition~\ref{prop:9}}) is functionally independent, even when
$\{\xi ^{i},\nu ^{j}\}$ is a functionally independent system. The scenario,
however, changes in the case of the Cartan distribution
$\mathcal{C}(\pi )$ on the infinite jet space, for an $m$-dimensional bundle
$\pi :E\rightarrow M$ with $n=\dim M$. Indeed, in such a case the generators
$X_{i}$ of $\mathcal{C}(\pi )$ are the total derivatives, hence the jet-order
of
$\left [\nu _{h}^{j}\right ]=\left [D_{i}\xi ^{h}\right ]^{-1}\left [D_{i}
\nu ^{j}\right ]$ (that is equation t{(\ref{eq:Prol_invar_pseudo})}) is higher
than that of $\{\xi ^{i},\nu ^{j}\}$. Thus, if
$\left \{ \xi ^{i},\nu ^{j}\right \} $ are functionally independent invariants
of a pseudosymmetry $Y$ (or, an $r$-pseudosymmetry
$\{Y_{1},...,Y_{r}\}$) of
$\mathcal{\mathcal{C}}(\pi )=<D_{1},...,D_{n}>$ satisfying
$\det \left (D_{s}\xi ^{i}\right )\neq 0$, the prolongation formulas t{(\ref{eq:Prol_invar_pseudo})}
provide an infinite sequence
$\{x_{i}'=\xi ^{i},\,u'{}_{\mu}^{j}=\nu _{\mu}^{j}\}$ of invariants of
$Y$ (resp., $\{Y_{1},...,Y_{r}\}$); when $p=m$ this sequence defines a
regular $\mathcal{C}$-morphism in $J^{\infty}(\pi )$, in view of t{Proposition~\ref{prop:3}}.

On the other hand, given a formally integrable equation
$\mathcal{E}=\{\mathbf{F}=0\}\subset J^{k}(\pi )$, if
$\big \{ \xi ^{1},...,\xi ^{n},\allowbreak \nu ^{1},...,\nu ^{p}\big \} $,
$p\geq m$, are functionally independent invariants of a pseudosymmetry
$\bar{Y}$ (resp., $r$-pseudosymmetry
$\{\bar{Y}_{1},...,\bar{Y}_{r}\}$) of
$\mathcal{\mathcal{C}}(\mathcal{E})=<\bar{D}_{1},...,\bar{D}_{n}>$ on
$\mathcal{E}^{(\infty )}$ satisfying
$\det \left (\bar{D}_{s}\xi ^{i}\right )\neq 0$, then prolongation formulas
t{(\ref{eq:Prol_invar_pseudo})} allow one to define a sequence
$\{\xi ^{i},\,\nu _{\mu}^{j}\}$ of invariants of $\bar{Y}$ (resp.,
$\{\bar{Y}_{1},...,\bar{Y}_{r}\}$) that are functionally dependent, due
to the constraints imposed by $\mathbf{F}=0$ and its differential consequences.
Indeed, assuming for instance that $\{\xi ^{i},\,\nu _{\mu}^{j}\}$ are
$\ell $-th order invariants of $\bar{Y}$, the cardinality of
$\{\xi ^{i},\,\nu _{\mu}^{j}\}$ grows with $|\mu |$ faster than the dimension
of the prolongation $\mathcal{E}^{(\ell +|\mu |-k)}$. Therefore, for sufficiently
large $|\mu |$ the system $\{\xi ^{i},\,\nu _{\mu}^{j}\}$ is necessarily
functionally dependent. In particular, a nontrivial $\bar{Y}$ has at most
$\dim \mathcal{E}^{(\ell +|\mu |-k)}-1$ functionally independent invariants
of order $\ell +|\mu |\geq k$, with the maximum reached only when
$\bar{Y}$ projects (or restricts) to a nontrivial vector field (i.e., a
derivation) on $\mathcal{E}^{(\ell +|\mu |-k)}$. Indeed, when
$\bar{Y}$ projects to a relative vector field on
$\mathcal{E}^{(\ell +|\mu |-k)}$, the invariants of the projection are
not necessarily also invariants of $\bar{Y}$. Analogous considerations
hold when $\{\xi ^{i},\,\nu _{\mu}^{j}\}$ are invariants of an $r$-pseudosymmetry.

Therefore, since $\mathbf{F}=0$ and its differential consequences entail
that for sufficiently large $|\mu |$ the system
$\{\xi ^{i},\,\nu _{\mu}^{j}\}$ is functionally dependent on
$\mathcal{E}^{(\infty )}$, it follows that the invariants
$\{\nu{}_{\sigma}^{j}\}$ satisfy an associated system of partial differential
equations $\mathbf{Q}=0$ (together with their prolongations), modulo
$\mathbf{F}=0$ and its differential consequences. This way one can look
at $\{x'_{i}=\xi ^{i},\,u'{}_{\mu}^{j}=\nu _{\mu}^{j}\}$ as the components
of a $\mathcal{C}$-morphism from
$\mathcal{E}=\{\mathbf{F}=\mathbf{0}\}$ to
$\mathcal{Q}=\{\mathbf{Q}=\mathbf{0}\}$. Of course, if one chooses another
system of invariants
$\{\hat{x}'_{i}=\hat{\xi}^{i},\,\hat{u}'{}^{j}=\hat{\nu}^{j}\}$ equivalent
to $\{\xi ^{i},\,\nu ^{j}\}$, i.e., such that
$\{\hat{\xi}^{i}=A^{i}(\xi ^{i},\nu ^{j}),\,\hat{\nu}^{j}=B^{j}(\xi ^{i},
\nu ^{j})\}$ and
$\det \left (\bar{D}_{s}\hat{\xi}^{i}\right )\neq 0$ (where
$\bar{D}_{s}$ are the total derivatives on $\mathcal{E}$), one would obtain
a system $\mathcal{\hat{Q}}$ which is point equivalent to
$\mathcal{Q}$, through the point transformation
$\{\hat{x}'_{i}=A^{i}(x',u'),\,\hat{u}'{}^{j}=B^{j}(x',u')\}$. In particular,
one can choose $\left \{ \xi ^{i},\nu ^{j}\right \} $ in such a way that
none of its element is the prolongation of another one and, in addition
to that, $\{\xi ^{i},\,\nu _{\mu}^{j}\}$ is a complete system of invariants,
i.e., a system that generates invariants of all possible orders. In such
a case, according to the common terminology used in the particular case
of symmetries (see for instance \cite{KLV,Sokolov,Svino-Sok}), one can
look at $\mathcal{Q}$ as the factorisation, or the quotient, of
$\mathcal{E}$ by $\bar{Y}$ (resp., $\{\bar{Y}_{1},...,\bar{Y}_{r}\}$).

Next two examples will provide a simple illustration of above factorisation
procedure. Other examples, describing B\"{a}cklund transformations, will
be discussed in Section~\ref{sec:last}.

\begin{example}
\label{exa:Cole-Hopf-II}%
The heat equation $\mathcal{E}=\{u_{t}-u_{xx}=0\}$, where $u=u(x,t)$, admits
the classical infinitesimal symmetry
$Y=u\,\partial _{u}+u_{x}\partial _{u_{x}}+u_{t}\partial _{u_{t}}+\,...$
(which defines a pseudosymmetry $\bar{Y}$ of $\mathcal{E}$ with
$\mu =0$) provided by the prolongation of $u\,\partial _{u}$; indeed it
is readily seen that
$Y(u_{t}-u_{xx})=u_{t}-u_{xx}=0\mod{\mathcal{E}}$. In this case, since
$Y$ is a classical symmetry of the ambient Cartan distribution, it projects
to a vector field on any finite order jet space $J^{k}(\pi )$ (where
$\pi :\mathbb{R}^{2}\times \mathbb{R}\rightarrow \mathbb{R}^{2},\;(x,t,u)
\mapsto (x,t)$). On the other hand, being also tangent to
$\mathcal{E}$ (hence a symmetry of $\mathcal{E}$), its restriction
$\bar{Y}:=\left .Y\right |_{\mathcal{E}^{(\infty )}}$ projects to a vector
field on $\mathcal{E}$ as well as any of its prolongations
$\mathcal{E}^{(\ell )}$. From now one we will limit ourself to domains
where $u$ and its derivatives do not vanish, hence above projections of
$Y$ are always nontrivial. Thus, being $\mathcal{E}$ a $7$-dimensional
submanifold of $J^{2}(\pi )$, $\bar{Y}$ admits
$6=\dim \mathcal{E}-1$ functionally independent invariants of order
$2$, $8=\dim \mathcal{E}^{(1)}-1$ functionally independent invariants of
order $3$, and so on. These invariants can be obtained by means of prolongations
from the following basic system of invariants
$\{\xi ^{1}=x,\,\xi ^{2}=t,\,\nu ^{1}=u_{x}/u,\,\nu ^{2}=u_{t}/u\}$ of
$\bar{Y}$, that define a $\mathcal{C}$-morphism
$\mathcal{B}:J^{\infty}(\pi )\rightarrow J^{\infty}(\pi ')$, where
$\pi ':\mathbb{R}^{2}\times \mathbb{R}^{2}\rightarrow \mathbb{R}^{2},
\;(x,t,u^{1}{}',u^{2}{}')\mapsto (x,t)$. Indeed, non-degeneracy condition
t{(\ref{eq:xi_nondeg})} is satisfied and a first prolongation provides
\begin{equation*}
u^{1}{}'_{x}=\frac{u_{xx}}{u}-\left (\frac{u_{x}}{u}\right )^{2},
\qquad u^{1}{}'_{t}=u^{2}{}'_{x}=\frac{u_{xt}}{u}-\left (
\frac{u_{x}}{u}\right )\left (\frac{u_{t}}{u}\right ),\qquad u^{2}{}'_{t}=
\frac{u_{tt}}{u}-\left (\frac{u_{t}}{u}\right )^{2}.
\end{equation*}
Thus, since the other two invariants $u^{1}{}'_{x},u^{1}{}'_{t}$ satisfy
the relations
\begin{equation*}
u^{1}{}'_{x}=u^{2}{}'-\left (u^{1}{}'\right )^{2},\qquad u^{1}{}'_{t}=u^{2}{}'_{x}
\qquad \mod{\{u_{xx}=u_{t}\}},
\end{equation*}
a basic system of second order functionally independent invariants of
$\bar{Y}$ is given by
$\{x,\,t,\,u^{1}{}',\allowbreak  \,u^{2}{}',\allowbreak \,u^{2}{}'_{x},\allowbreak  \,u^{2}{}'_{t}\}$. This means
that $\mathcal{B}$ is a $\mathcal{C}$-morphism from the heat equation
$\mathcal{E}=\{u_{t}-u_{xx}=0\}$ to the system
$\mathcal{Q}=\{u^{1}{}'_{x}-u^{2}{}'+\left (u^{1}{}'\right )^{2}=0,\,u^{1}{}'_{t}-u^{2}{}'_{x}=0
\}$, that one can interpret as the factorisation of $\mathcal{E}$ by
$\bar{Y}$. Hence,
$\left \{ u^{1}{}'=u_{x}/u,\,u^{2}{}'=u_{t}/u\right \} $ transforms solutions
of $\mathcal{E}$ to solutions of $\mathcal{Q}$, as well as of any of its
differential consequences. In particular, since the Burgers equation
$\mathcal{E}'=\{u'_{t}-u'_{xx}-2u'u'_{x}=0\}$ with $u':=u^{1}{}'$ is a
differential consequence of $\mathcal{Q}$, one obtains the Cole-Hopf transformation
$u'=u_{x}/u$ as a by-product of above transformation.
\end{example}

\begin{example}
\label{exa:Miura-II}%
The mKdV equation $\mathcal{E}=\{u_{t}-u_{xxx}+6u^{2}u_{x}=0\}$, can be
written in conservation law form as
$D_{t}u+D_{x}\left (-u_{xx}+2u^{3}\right )=0$. Thus any
$\mu =c\left (udx+(u_{xx}-2u^{3})dt\right )$, with $c\in \mathbb{R}$, is
a conservation law of $\mathcal{E}$ and one can search for pseudosymmetries
of the form t{(\ref{eq:Y_pseudo_int})}, where $x_{1}=x$ and $x_{2}=t$. Here
we consider the pseudosymmetry $\bar{Y}$ found by Sokolov in
\cite{Sokolov}, obtained from the vector field $\partial _{u}$ by a pseudoprolongation
relative to $2\mu $:
\begin{equation*}
\begin{array}{l@{\quad}l}
Y & =\partial _{u}+2u\partial _{u_{x}}+2(u_{xx}-2u^{3})\partial _{u_{t}}+2(u_{x}+2u^{2})
\partial _{u_{xx}}+2(u_{t}+2u(u_{xx}-2u^{3}))\partial _{u_{xt}}
\vspace{5pt}
\\
& \qquad +2(u_{xxt}-6u^{2}u_{t}+2(u_{xx}-2u^{3})^{2})\partial _{u_{tt}}+...
\;.
\end{array}
\end{equation*}
In this case, the projections of $Y$ to $J^{k}(\pi )$ (where
$\pi :\mathbb{R}^{2}\times \mathbb{R}\rightarrow \mathbb{R}^{2},\;(x,t,u)
\mapsto (x,t)$) are relative vector fields. The same occurs for projections
of $Y$ to $\mathcal{E}$ and prolongations $\mathcal{E}^{(\ell )}$. For
instance, the projection of $Y$ on $J^{1}(\pi )$ is the relative vector
field
$\partial _{u}+2u\partial _{u_{x}}+2(u_{xx}-2u^{3})\partial _{u_{t}}$.
As a consequence, $\bar{Y}$ has only the $3$ functionally independent invariants
of first order $\{\xi ^{1}=x,\,\xi ^{2}=t,\,\nu ^{1}=u_{x}-u^{2}\}$,
$3$ further functionally independent invariants of second order
$\{\nu _{x}^{1}=u_{xx}-2uu_{x},\,\nu _{t}^{1}=u_{xt}-2uu_{t},\,\nu ^{2}=u_{t}-2u
\,\nu _{x}^{1}-2u^{2}\nu ^{1}\}$ and only $3$ of the derived invariants
of third order
$\{\nu _{x}^{2},\nu _{y}^{2},\nu _{xx}^{1},\nu _{xy}^{1},\nu _{yy}^{1}
\}$ are functionally independent since
$2\nu ^{1}\nu _{x}^{1}+\nu _{x}^{2}-\nu _{t}^{1}=0$,
$2\left (\nu ^{1}\right )^{2}+\nu _{xx}^{1}-\nu ^{2}=0$. Then all higher
order invariants are generated by $\{\xi ^{i},\,\nu _{\mu}^{j}\}$ and the
factorisation of $\mathcal{E}$ by $\bar{Y}$ is described by the system
of differential equations
$\mathcal{Q}=\{2\nu ^{1}\nu _{x}^{1}+\nu _{x}^{2}-\nu _{t}^{1}=0,\,2
\left (\nu ^{1}\right )^{2}+\nu _{xx}^{1}-\nu ^{2}=0\}$, that describes
the lower order functional dependencies between the invariants
$\{\xi ^{i},\,\nu _{\mu}^{j}\}$. Hence, the map
$\left \{ \nu ^{1}=u_{x}-u^{2},\,\nu ^{2}=u_{t}-2u\,\left (u_{xx}-2uu_{x}
\right )-2u^{2}\left (u_{x}-u^{2}\right )\right \} $ transforms solutions
of $\mathcal{E}$ to solutions of $\mathcal{Q}$, as well as of any of its
differential consequences. In particular, since the KdV equation
$\mathcal{E}'=\{u'_{t}-u'_{xxx}-6u'u'_{x}=0\}$ with $u':=\nu ^{1}$ is a
differential consequence of $\mathcal{Q}$, one obtains the Miura transformation
$u'=u_{x}-u^{2}$ as a by-product of above transformation.
\end{example}

\subsection{Differentiable coverings and B\"acklund transformations as nonlocal
$\mathcal{C}$-morphisms}
\label{subsec:Cov-BT}

Let $\mathcal{E}\subset J^{k}(\pi )$ be a formally integrable equation,
with $\pi :E\rightarrow M$, $\dim M=n$. Throughout the paper it will be
adopted the following
%
\begin{defn}
\label{defn15}
A \textit{differentiable covering} of a formally integrable equation
$\mathcal{E}$ is a fiber bundle
$\tau :\tilde{\mathcal{E}}\longrightarrow \mathcal{E}^{(\infty )}$ with
the following properties:
\end{defn}
\begin{enumerate}
\item $\tilde{\mathcal{E}}$ is equipped with a $n$-dimensional involutive
distribution $\tilde{\mathcal{C}}$;
\item for any $\theta \in \tilde{\mathcal{E}}$ the push-forward
$\tau _{*\,\theta}:\tilde{\mathcal{C}}_{\theta}\longrightarrow
\mathcal{C}_{\tau (\theta )}(\mathcal{E})$ is an isomorphism.
\end{enumerate}

According to above definition (see \cite{B_KrV}), when $\tau $ has finite
fiber dimension $q$, $\tilde{\mathcal{E}}$ is locally diffeomorphic to
the product $\mathcal{E}^{(\infty )}\times W$, where
$W\subseteq \mathbb{R}^{q}$ is an open set. Thus, locally identifying
$\tilde{\mathcal{E}}$ with $\mathcal{E}^{(\infty )}\times W$,
$\tau $ can be locally realized as the natural projection
$\mathcal{E}^{(\infty )}\times W\rightarrow \mathcal{E}^{(\infty )}$. Then,
using standard coordinates $v^{1},...,v^{q}$ in $\mathbb{R}^{q}$, the generators
$\tilde{D}_{i}$ of the involutive distribution $\tilde{\mathcal{C}}$ on
$\mathcal{E}^{(\infty )}\times W$ can be locally written as
$\tilde{D}_{i}=\bar{D}_{i}+\sum _{s}X_{i}^{s}\partial _{v^{s}}$ where
$\bar{D}_{i}$ are the total derivatives on $\mathcal{E}^{(\infty )}$ and
$X_{i}^{s}$ are smooth functions such that
$\left [\tilde{D}_{i},\tilde{D}_{j}\right ]=0$, i.e.,
$\tilde{D}_{i}X_{j}^{s}=\tilde{D}_{j}X_{i}^{s}$, in view of
$[\bar{D}_{i},\bar{D}_{j}]=0$.

In this paper we will consider only the case when
$\tilde{\mathcal{E}}$ is the infinite prolongation of a formally integrable
system $\mathcal{V}\subset J^{l}(\pi )$. Thus, in view of above considerations,
the fiber coordinates $\{u^{1},...,u^{m}\}$ of $\pi $ will be separated
in two subsets $\{z^{1},...,z^{m-q}\}$ and $\{v^{1},...,v^{q}\}$, such
that $\mathcal{E}=\left \{ \mathbf{F}(x,z_{\sigma})=0\right \} $ and
$\mathcal{V}^{(\infty )}$ is the infinite prolongation of a system
%
\begin{equation}
\left \{
\begin{array}{l}
F^{h}(x,z_{\sigma})=0,
\\
v_{i}^{s}=X_{i}^{s}(x,z_{\sigma},v).
\end{array}
\right .
\label{eq:Y_covering}
\end{equation}
It is noteworthy to observe that, in view of the definition of a differentiable
covering
$\tau :\mathcal{V}^{(\infty )}\longrightarrow \mathcal{E}^{(\infty )}$,
any solution of $\mathcal{V}$ can be projected by $\tau $ to a solution
of $\mathcal{E}$.

A system $\mathcal{V}$ like t{(\ref{eq:Y_covering})} will be called a
$q$-dimensional \textsl{differentiable extension} of
$\mathcal{E}=\left \{ \mathbf{F}=0\right \} $. Moreover, given a differentiable
extension $\mathcal{V}$ of $\mathcal{E}$, it is customary to call
$v^{1},...,v^{q}$ the nonlocal variables of the given extension (or the
corresponding differentiable covering). Also, non-local symmetries and
nonlocal conservation laws of $\mathcal{E}$ are the symmetries and conservation
laws of a differentiable extension $\mathcal{Y}$ of $\mathcal{E}$.

Examples of differential extensions, hence of coverings, can be easily
obtained by using in t{(\ref{eq:Y_covering})} the first order equations for
the potentials of conservation laws, when $n=2$. For instance, the equations
$v_{x}=u$, $v_{t}=u_{xx}-2u^{3}$ for the potential $v$ of the conservation
law $u\,dx+(u_{xx}-2u^{3})\,dt$ of mKdV (see t{Example~\ref{exa:Miura-II}}), provide a differentiable extension of mKdV.

Other examples, that are particularly relevant in the study of B\"{a}cklund
transformations, are related to ZCRs. Indeed condition t{(\ref{eq:ZCR})},
for an $\ell \times \ell $ matrix-valued ZCR $\alpha =X\,dx+T\,dt$, is
equivalent to the integrability condition of a linear system
\begin{equation*}
\left (
\begin{array}{l}
v_{x}^{1}
\vspace{5pt}
\\
\vdots
\\
v_{x}^{\ell}
\end{array}
\right )=X\left (
\begin{array}{l}
v^{1}
\vspace{5pt}
\\
\vdots
\\
v^{\ell}
\end{array}
\right ),\qquad \left (
\begin{array}{l}
v_{t}^{1}
\vspace{5pt}
\\
\vdots
\\
v_{t}^{\ell}
\end{array}
\right )=T\left (
\begin{array}{l}
v^{1}
\vspace{5pt}
\\
\vdots
\\
v^{\ell}
\end{array}
\right ).
\end{equation*}

\begin{example}
\label{example_KdV_lax1}
The KdV equation $\mathcal{E}=\{z_{t}=-6zz_{x}-z_{xxx}\}$, with
$z=z(x,t)$, admits the ZCR $\alpha =X\,dx+T\,dt$ given by
\begin{equation*}
X=\left (
\begin{array}{l@{\quad}l}
0 & 1
\vspace{5pt}
\\
-z-\lambda \qquad & 0
\end{array}
\right ),\qquad T=\left (
\begin{array}{l@{\quad}l}
z_{x} & 4\lambda -2z
\vspace{5pt}
\\
z_{xx}+2z^{2}-2\lambda z-4\lambda ^{2}\qquad & -z_{x}
\end{array}
\right ),
\end{equation*}
with $\lambda $ a real parameter usually referred to as the spectral parameter.
Thus, by introducing the fiber bundle
$\mathbb{R}^{2}\times \mathbb{R}^{3}\rightarrow \mathbb{R}^{2}$,
$(x,t,z,v^{1},v^{2})\mapsto (x,t)$, the infinite prolongation of the linear
system
%
\begin{equation}
\mathcal{V}:\quad \left \{
\begin{array}{l}
v_{x}^{1}=v^{2},
\vspace{5pt}
\\
v_{x}^{2}=-(z+\lambda )v^{1},
\vspace{5pt}
\\
v_{t}^{1}=z_{x}v^{1}+\left (4\lambda -2z\right )v^{2},
\vspace{5pt}
\\
v_{t}^{2}=\left (z_{xx}+2z^{2}-2\lambda z-4\lambda ^{2}\right )v^{1}-z_{x}v^{2},
\vspace{5pt}
\\
z_{t}=-6zz_{x}-z_{xxx},
\end{array}
\right .
\label{eq:cov2_KdV}
\end{equation}
defines a $2$-dimensional differentiable covering
$\mathcal{V}^{(\infty )}\longrightarrow \mathcal{E}^{(\infty )}$ of
$\mathcal{E}$. It is noteworthy to remark that, by setting
$v^{1}:=\phi $ and $v^{2}:=\phi _{x}$, the above first order linear system
can also be rewritten in the following equivalent form (Lax pair)
%
\begin{equation}
\left \{
\begin{array}{l}
\phi _{xx}=-z\phi -\lambda \phi ,
\vspace{5pt}
\\
\phi _{t}=2(2\lambda -z)\phi _{x}+z_{x}\phi .
\vspace{5pt}
\end{array}
\right .
\label{eq:cov1_kdv}
\end{equation}
Indeed, it is well known \cite{Miura-Gardner-Kruskal} (see also
\cite{CHZ,Lax,MatvSal,Taktajan} and references therein) that
$\mathcal{E}$ is the integrability condition of t{(\ref{eq:cov1_kdv})}.

Also, it is interesting to notice here that the considered $2$-dimensional
differentiable covering, defined by t{(\ref{eq:cov2_KdV})}, is an extension
of a $1$-dimensional covering, by means of the equations for the potential
of a conservation law. Indeed, by introducing $\rho :=v^{2}/v^{1}$ one
can readily see that t{(\ref{eq:cov2_KdV})} reduces to
\begin{equation*}
\left \{
\begin{array}{l}
D_{x}\left (\ln v^{1}\right )=\rho ,
\vspace{5pt}
\\
D_{t}\left (\ln v^{1}\right )=z_{x}+\left (4\lambda -2z\right )\rho ,
\vspace{5pt}
\\
\rho _{x}=-\rho ^{2}-z-\lambda ,
\vspace{5pt}
\\
\rho _{t}=-2(2\lambda -z)\rho ^{2}-2z_{x}\rho +z_{xx}+2z^{2}-2
\lambda z-4\lambda ^{2},
\vspace{5pt}
\\
z_{t}=-6zz_{x}-z_{xxx}.
\end{array}
\right .
\end{equation*}
Thus t{(\ref{eq:cov2_KdV})} describes an extension of the $1$-dimensional
differentiable covering
$\tau :\mathcal{Y}^{(\infty )}\longrightarrow \mathcal{E}^{(\infty )}$,
defined by the system
%
\begin{equation}
\mathcal{Y}:\quad \left \{
\begin{array}{l}
\rho _{x}=-\rho ^{2}-z-\lambda ,
\vspace{5pt}
\\
\rho _{t}=-2(2\lambda -z)\rho ^{2}-2z_{x}\rho +z_{xx}+2z^{2}-2
\lambda z-4\lambda ^{2},
\vspace{5pt}
\\
z_{t}=-6zz_{x}-z_{xxx},
\end{array}
\right .
\label{eq:cov3_KdV}
\end{equation}
by means of the two differential equations
$\left (\ln v^{1}\right )_{x}=\rho $ and
$\left (\ln v^{1}\right )_{t}=z_{x}+\left (4\lambda -2z\right )\rho $,
for the potential $\ln v^{1}$ of the conservation law
$\mu :=\rho \,dx+\left (z_{x}+\left (4\lambda -2z\right )\rho \right )dt$,
defined in $\tau $.
\end{example}

\begin{example}
\label{Covering_TZ}
The Tzitzeica equation
$\mathcal{E}=\{z_{xt}=-{{\mathrm{e}}^{-2\,z}}+{{\mathrm{e}}^{z}}\}$, with
$z=z(x,t)$, admits the ZCR $\alpha =X\,dx+T\,dt$ given by \cite{Brez}
\begin{equation*}
X=\left (
\begin{array}{c@{\quad}c@{\quad}c}
-z_{x} & 0 & \lambda
\\
\noalign{\vspace{4pt}}\lambda & z_{x} & 0
\\
\noalign{\vspace{4pt}}
0 & \lambda & 0
\end{array}
\right ),\qquad T=\left ({\displaystyle
\begin{array}{c@{\quad}c@{\quad}c}
0 & \frac{1}{\lambda}{{\mathrm{e}}^{-2\,z}} & 0
\\
\noalign{\vspace{4pt}}
0 & 0 & \frac{1}{\lambda}{{\mathrm{e}}^{z}}
\\
\noalign{\vspace{4pt}}
\frac{1}{\lambda}{{\mathrm{e}}^{z}} & 0 & 0
\end{array}
}\right ),
\end{equation*}
with $\lambda $ a real parameter. Thus, by introducing the fiber bundle
$\mathbb{R}^{2}\times \mathbb{R}^{4}\rightarrow \mathbb{R}^{2}$,
$(x,t,z,v^{1},v^{2},v^{3})\allowbreak \mapsto (x,t)$, the infinite prolongation of
the linear system
%
\begin{equation}
\mathcal{V}:\quad \left \{
\begin{array}{l@{\quad}l@{\quad}l}
v_{x}^{1}=-z_{x}v^{1}+\lambda v^{3}, &
\vspace{5pt}
& {\displaystyle v_{t}^{1}=\frac{1}{\lambda}e^{-2z}v^{2}},
\\
v_{x}^{2}=\lambda v^{1}+z_{x}v^{2}, &
\vspace{5pt}
& {\displaystyle v_{t}^{2}=\frac{1}{\lambda}e^{z}v^{3}},
\\
v_{x}^{3}=\lambda v^{2}, & & {\displaystyle v_{t}^{3}=
\frac{1}{\lambda}e^{z}v^{1}},
\end{array}
\right .
\label{Lin_cov_TZ}
\end{equation}
defines a $3$-dimensional differentiable covering
$\mathcal{V}^{(\infty )}\longrightarrow \mathcal{E}^{(\infty )}$ of
$\mathcal{E}$. Again, similarly to what was observed for KdV in the previous
example, by means of $v^{2}=v_{x}^{3}/\lambda $ and
$v^{1}=\lambda e^{-z}v_{t}^{3}$ the first-order linear system can be reduced
to a higher-order linear one
%
\begin{equation}
\left \{
\begin{array}{l}
\phi _{xx}=z_{x}\phi _{x}+\lambda ^{3}e^{-z}\phi _{t},
\vspace{5pt}
\\
\phi _{xt}=e^{z}\phi ,
\vspace{5pt}
\\
{\displaystyle \phi _{tt}=z_{t}\phi _{t}+\frac{1}{\lambda ^{3}}e^{-z}
\phi _{x}},
\end{array}
\right .
\label{lin_phi_TZ}
\end{equation}
where $\phi :=v^{3}$. Indeed, one can readily see that $\mathcal{E}$ is
also the integrability condition of t{(\ref{lin_phi_TZ})}. Moreover, similarly
to the previous example, the $3$-dimensional covering defined by t{(\ref{Lin_cov_TZ})}
is the extension of a $2$-dimensional covering, by means of the equations
for the potential of a conservation law. Indeed, by introducing
$\rho ^{1}=v^{1}/v^{3}$ and $\rho ^{2}=v^{2}/v^{3}$ one can readily see
that t{(\ref{Lin_cov_TZ})} reduces to
\begin{equation*}
\left \{
\begin{array}{l}
D_{x}\left (\ln v^{3}\right )=\lambda{\rho ^{2}},
\vspace{5pt}
\\
{\displaystyle D_{t}\left (\ln v^{3}\right )=\frac{1}{\lambda}e^{z}{
\rho ^{1}}},
\vspace{5pt}
\\
\rho _{x}^{1}=-\lambda \,\rho ^{1}\rho ^{2}-z_{x}\,\rho ^{1}+\lambda ,
\vspace{5pt}
\\
{\displaystyle \rho _{t}^{1}=-\frac{1}{\lambda}{{\mathrm{e}}^{z}}{\left (
\rho ^{1}\right )}^{2}+\frac{1}{\lambda}e^{-2z}\rho ^{2}},
\vspace{5pt}
\\
\rho _{x}^{2}=-\lambda{\left (\rho ^{2}\right )}^{2}+z_{x}{\rho ^{2}}+
\lambda \rho ^{1},
\vspace{5pt}
\\
{\displaystyle \rho _{t}^{2}=-\frac{1}{\lambda}{{\mathrm{e}}^{z}}{\rho ^{1}}{
\rho ^{2}}+\frac{1}{\lambda}e^{z}},
\vspace{5pt}
\\
z_{xt}=-{{\mathrm{e}}^{-2\,z}}+{{\mathrm{e}}^{z}}.
\end{array}
\right .
\end{equation*}
Thus t{(\ref{Lin_cov_TZ})} describes an extension of the $2$-dimensional differentiable
covering
$\tau :\mathcal{Y}^{(\infty )}\longrightarrow \mathcal{E}^{(\infty )}$,
defined by the system
%
\begin{equation}
\mathcal{Y}:\quad \left \{
\begin{array}{l}
\rho _{x}^{1}=-\lambda \,\rho ^{1}\rho ^{2}-z_{x}\,\rho ^{1}+\lambda ,
\vspace{5pt}
\\
{\displaystyle \rho _{t}^{1}=-\frac{1}{\lambda}{{\mathrm{e}}^{z}}{\left (
\rho ^{1}\right )}^{2}+\frac{1}{\lambda}e^{-2z}\rho ^{2}},
\vspace{5pt}
\\
\rho _{x}^{2}=-\lambda{\left (\rho ^{2}\right )}^{2}+z_{x}{\rho ^{2}}+
\lambda \rho ^{1},
\vspace{5pt}
\\
{\displaystyle \rho _{t}^{2}=-\frac{1}{\lambda}{{\mathrm{e}}^{z}}{\rho ^{1}}{
\rho ^{2}}+\frac{1}{\lambda}e^{z}},
\vspace{5pt}
\\
z_{xt}=-{{\mathrm{e}}^{-2\,z}}+{{\mathrm{e}}^{z}},
\end{array}
\right .
\label{Riccati_TZ}
\end{equation}
by means of the two differential equations
$\left (\ln v^{3}\right )_{x}=\lambda{\rho ^{2}}$ and
$\left (\ln v^{3}\right )_{t}=e^{z}{\rho ^{1}/\lambda}$, for the
potential $\ln v^{3}$ of the conservation law
$\mu :=\lambda{\rho ^{2}}\,dx+\frac{1}{\lambda}e^{z}{\rho ^{1}}\,dt$, defined in $\tau $.
\end{example}

  For further details on the theory of differential coverings, as well
as for examples and explicit computations, see \cite{B_KrV} and references
therein.

For B\"{a}cklund transformations one can adopt the following
%
\begin{defn}
\label{def:BT-ABT}%
A B\"{a}cklund transformation from a formally integrable equation
$\mathcal{E}$ of order $k$ to another formally integrable equation
$\mathcal{E}'\subset J^{k'}(\pi ')$, both with the same number $n$ of independent
variables, is a regular $\mathcal{C}$-morphism
$\mathcal{B}:J^{\infty}(\pi )\rightarrow J^{\infty}(\pi ')$ from
$\mathcal{V}$ to $\mathcal{E}'$, for some differentiable covering
$\tau :\mathcal{V}^{(\infty )}\longrightarrow \mathcal{E}^{(\infty )}$
defined by a differentiable extension
$\mathcal{V}\subset J^{l}(\pi )$ of $\mathcal{E}$. In particular
$\mathcal{B}$ is an auto-B\"{a}cklund transformation whenever
$\mathcal{E}'$ is a copy of $\mathcal{E}$.
\end{defn}

  According to this definition, a B\"{a}cklund transformation can be seen
as a pair of $\mathcal{C}$-morphisms
\begin{equation*}
\begin{array}{c@{\quad}c@{\quad}c}
& \mathcal{V}^{(\infty )}
\\
\qquad \tau \swarrow
\vspace{3pt}
& & \searrow \mathcal{B}
\vspace{3pt}
\qquad
\\
\mathcal{E}^{(\infty )} & & \qquad \mathcal{E}'{}^{(\infty )}
\end{array}
\end{equation*}
which is analogous to the point of view followed by Krasil'shchik and Vinogradov
in \cite{Kras-Vin}. However, in this paper, we will not limit ourselves
to considering B\"{a}cklund transformations, which, like in
\cite{Kras-Vin}, keep the independent variables unchanged.

\begin{example}
\label{exa:KdV_B11}%
Continuing t{Example~\ref{example_KdV_lax1}} we can provide an example of
auto-B\"{a}cklund transformation for KdV. Indeed, by considering the trivial
bundle
$\pi :\mathbb{R}^{2}\times \mathbb{R}^{2}\rightarrow \mathbb{R}^{2}$,
$(x,t,z,\rho )\mapsto (x,t)$ and the $1$-dimensional covering
$\tau :\mathcal{Y}^{(\infty )}\longrightarrow \mathcal{E}^{(\infty )}$,
it can be checked that the $\mathcal{C}$-morphism
$\mathcal{B}:J^{\infty}(\pi )\rightarrow J^{\infty}(\pi ')$ defined by
%
\begin{equation}
z'=-z-2\rho ^{2}-2\lambda ,
\label{eq:B11_KdV}
\end{equation}
through the prolongation formulas t{(\ref{Prol_mor})}, defines a
$\mathcal{C}$-morphism from $\mathcal{Y}$ to
$\mathcal{E}'=\{z'_{t}+6z'z'_{x}+z'_{xxx}=0\}$, which is a copy of
$\mathcal{E}$. Hence t{(\ref{eq:B11_KdV})} provides an auto--B\"{a}cklund
transformation of KdV. In practice, for any known solution $f_{0}$ of the
KdV, such a transformation associates a new solution
\begin{equation*}
f_{1}=-f_{0}-2g_{0}^{2}-2\lambda _{0},
\end{equation*}
where $g_{0}$ is the solution of the Riccati-type system corresponding
to $f_{0}$ for some $\lambda _{0}$, i.e.,
\begin{equation*}
\left \{
\begin{array}{l}
g_{0x}=-g_{0}^{2}-f_{0}-\lambda _{0},
\vspace{5pt}
\\
g_{0t}=-2(2\lambda _{0}-f_{0})g_{0}^{2}-2f_{0x}g_{0}+f_{0xx}+2f_{0}^{2}-2
\lambda _{0}f_{0}-4\lambda _{0}^{2},
\vspace{5pt}
\end{array}
\right .
\end{equation*}
Further iterations with pairwise different parameters
$\lambda _{1},\lambda _{2},...$ leads to a sequence of new solutions
\begin{equation*}
f_{i}=-f_{i-1}-2g_{i-1}^{2}-2\lambda _{i-1},
\end{equation*}
where each $g_{i-1}$ is the solution of the Riccati-type system corresponding
to $f_{i-1}$ and $\lambda _{i-1}$.
\end{example}

\begin{rem}
\label{rem20}
Since $\rho =\phi _{x}/\phi $, one can readily check that in terms of the
Lax potential above auto-B\"{a}cklund transformation can be written as
$z'=-z-2(\phi _{x}/\phi )^{2}-2\lambda $, or equivalently as
$z'=z+2\left (\ln \phi \right )_{xx}$, in view of t{(\ref{eq:cov1_kdv})}.
Thus, for any known solution $f_{0}$ of the KdV, the transformation associates
the new solution $f_{1}=f_{0}+2\left (\ln g_{0}\right )_{xx}$ where
$g_{0}$ is the corresponding solution of Lax system
\begin{equation*}
\left \{
\begin{array}{l}
g_{0xx}=-f_{0}g_{0}-\lambda _{0}g_{0},
\vspace{5pt}
\\
g_{0t}=2(2\lambda _{0}-f_{0})g_{0x}+f_{0x}g_{0}.
\vspace{5pt}
\end{array}
\right .
\end{equation*}
\end{rem}

\begin{example}
\label{exa:KdV_B21}%
If in the previous example we augment the space by passing to the bundle
$\hat{\pi}:\mathbb{R}^{2}\times \mathbb{R}^{3}\rightarrow \mathbb{R}^{2},(x,t,z,
\rho ,\hat{\rho})\mapsto (x,t)$, and instead of t{(\ref{eq:cov3_KdV})} consider
the system
%
\begin{equation}
\hat{\mathcal{Y}}:\quad \left \{
\begin{array}{l}
\hat{\rho}_{x}=-\hat{\rho}^{2}-z-\hat{\lambda},
\vspace{5pt}
\\
\hat{\rho}_{t}=-2(2\hat{\lambda}-z)\hat{\rho}^{2}-2z_{x}\hat{\rho}+z_{xx}+2z^{2}-2
\hat{\lambda}z-4\hat{\lambda}^{2},
\vspace{5pt}
\\
\rho _{x}=-\rho ^{2}-z-\lambda ,
\vspace{5pt}
\\
\rho _{t}=-2(2\lambda -z)\rho ^{2}-2z_{x}\rho +z_{xx}+2z^{2}-2
\lambda z-4\lambda ^{2},
\vspace{5pt}
\\
z_{t}=-6zz_{x}-z_{xxx},
\end{array}
\right .
\label{eq:cov2_KdV-1}
\end{equation}
obtained by doubling the Riccati-type system, i.e., by augmenting another
copy of the Riccati-type system with spectral parameter
$\hat{\lambda}\neq \lambda $, then the natural projection
$\hat{\tau}:\hat{\mathcal{Y}}^{(\infty )}\longrightarrow \mathcal{E}^{(
\infty )}$ is another covering of KdV and the $\mathcal{C}$-morphism
$J^{\infty}(\hat{\pi})\rightarrow J^{\infty}(\pi ')$ defined by
%
\begin{equation}
z'=-z-2\hat{\rho}^{2}-2\hat{\lambda},\qquad \rho '=-
\frac{\hat{\lambda}-\lambda}{\hat{\rho}-\rho}-\hat{\rho},
\label{eq:DT_KdV}
\end{equation}
is a $\mathcal{C}$-morphism from $\hat{\mathcal{Y}}$ to the system
\begin{equation*}
\mathcal{E}':\quad \left \{
\begin{array}{l}
\rho '_{x}=-\rho '{}^{2}-z'-\lambda ,
\vspace{5pt}
\\
\rho '_{t}=-2(2\lambda -z')\rho '{}^{2}-2z'_{x}\rho '+z'_{xx}+2z'{}^{2}-2
\lambda z'-4\lambda ^{2},
\vspace{5pt}
\\
z'_{t}=-6z'z'_{x}-z'_{xxx}.
\end{array}
\right .
\end{equation*}
In this case, to obtain a new solution $z'$ of KdV out of a given one
$z=f_{0}$, the first step is to compute the corresponding solution
$\rho =g_{0}$ of the Riccati-type system, with arbitrary $\lambda $. Then,
since $g_{0}$ depends on $\lambda $, one gets another instance
$\hat{g}_{0}$ of that function, by substituting $\lambda $ with
$\hat{\lambda}\neq \lambda $. This way t{(\ref{eq:DT_KdV})} provides the new
pair of functions
\begin{equation*}
z'=-f_{0}-2\hat{g}_{0}^{2}-2\hat{\lambda},\qquad \rho '=-
\frac{\hat{\lambda}-\lambda}{\hat{g}_{0}-g}-\hat{g}_{0},
\end{equation*}
that satisfy
\begin{equation*}
\left \{
\begin{array}{l}
\rho _{x}=-\rho ^{2}-z-\lambda ,
\vspace{5pt}
\\
\rho _{t}=-2(2\lambda -z)\rho ^{2}-2z_{x}\rho +z_{xx}+2z^{2}-2
\lambda z-4\lambda ^{2},
\vspace{5pt}
\\
z_{t}=-6zz_{x}-z_{xxx}.
\end{array}
\right .
\end{equation*}
The peculiarity of this transformation, in comparison with that of previous
example, is that it provides the new solution $z'$ together with the corresponding
solution $\rho '$ of the Riccati-type system, with arbitrary
$\lambda $. Thus, further applications of the transformations only require
derivations and algebraic operations.

It is noteworthy to remark here that, passing to the Lax potential
$\phi $, the transformation t{(\ref{eq:DT_KdV})} takes the form (with
$\phi $ and $\hat{\phi}$ solutions of Lax equations with parameters
$\lambda $ and $\hat{\lambda}$, respectively)
\begin{equation*}
z'=z+2\left (\ln \hat{\phi}\right )_{xx},\qquad \phi '=\phi _{x}-
\phi \left (\ln \hat{\phi}\right )_{x},
\end{equation*}
which is known in literature as the Darboux transformation of KdV (see
for instance \cite{CHZ,MatvSal}). Thus, the notion of B\"{a}cklund
transformation described by t{Definition~\ref{def:BT-ABT}} allows one to regard
also Darboux transformations as examples of B\"{a}cklund transformations.
\end{example}

\begin{example}
\label{exa:SP_B11}%
Consider the covering
$\tau :\mathcal{Y}^{(\infty )}\longrightarrow \mathcal{E}^{(\infty )}$
of the short-pulse equation
$\mathcal{E}=\{z_{xt}=\frac{1}{2}z^{2}z_{xx}+zz_{x}^{2}+z\}\subset J^{3}(
\pi )$, defined by the system
%
\begin{equation}
\mathcal{Y}:\quad \left \{
\begin{array}{l}
{\displaystyle \rho _{x}=-{\displaystyle \frac{z_{x}}{4\eta}}\rho ^{2}+{
\displaystyle \frac{1}{2\eta}}\,\rho +{\displaystyle
\frac{z_{x}}{4\eta}}},
\vspace{10pt}
\\
{\displaystyle \rho _{t}=-{\displaystyle
\frac{z\left (zz_{x}+4\,\eta \right )}{8\eta}}\rho ^{2}+{
\displaystyle \frac{\left (8\,\eta ^{2}+z^{2}\right )}{4\eta}}\rho -{
\displaystyle \frac{z\left (-zz_{x}+4\,\eta \right )}{8\eta}}},
\vspace{10pt}
\\
{\displaystyle z_{xt}={\displaystyle \frac{1}{2}}z^{2}z_{xx}+zz_{x}^{2}+z},
\end{array}
\right .
\label{eq:cov1_SP}
\end{equation}
where
$\pi :\mathbb{R}^{2}\times \mathbb{R}^{2}\rightarrow \mathbb{R}^{2}$,
$(x,t,z,\rho )\mapsto (x,t)$. One can check that the transformation
%
\begin{equation}
x'=x+8\frac{\eta}{\rho ^{2}+1},\qquad t'=t,\qquad z'=-z+8\eta
\frac{\rho}{\rho ^{2}+1},
\label{eq:B11_SP}
\end{equation}
determines an auto-B\"{a}cklund transformation, i.e., a B\"{a}cklund transformation
from $\mathcal{E}=\{z_{xt}=\frac{1}{2}z^{2}z_{xx}+zz_{x}^{2}+z\}$ to
$\mathcal{E}'=\{z'_{xt}=\frac{1}{2}z'{}^{2}z'_{xx}+z'z'{}_{x}^{2}+z'
\}$.%
\end{example}

\section{Riccati-type differentiable coverings determined by ZCRs}
\label{subsec:Riccati-type-differentiable-cove}

In this section, starting from a ZCR of an equation $\mathcal{E}$, we will
show how to determine a first-order Riccati-type system defining a differentiable
covering together with a corresponding nonlocal conservation law. We will
see in Section~\ref{sec:last} that this type of covering and the corresponding
nonlocal conservation law are fundamental for the computation of pseudosymmetries
that allow one to determine B\"{a}cklund transformations.

Let
$\alpha \in \mathfrak{g}\otimes \bar{\Lambda}^{1}\left ({\mathcal{E}}
\right )$ be a ZCR of the form $\alpha =X\,dx+T\,dt$, for a formally integrable
equation $\mathcal{E}:=\left \{ \mathbf{F=0}\right \} $ with two independent
variables $\left (x,t\right )$. We assume here that $X$ and $T$ are
$\ell \times \ell $ matrices, with $\ell \geq 2$.

One can readily check that t{(\ref{eq:ZCR})} is equivalent to the integrability
condition
%
\begin{equation}
\bar{D}_{t}X-\bar{D}_{x}T+\left [X,T\right ]=0
\label{eq:Matrix_ZCR}
\end{equation}
of a linear system of the form
%
\begin{equation}
V_{x}=XV,\qquad V_{t}=TV,
\label{eq:Linear_problem}
\end{equation}
where $V=(v^{1},...,v^{\ell})^{T}$ is an auxiliary vector valued function
of $(x,t)$.

This means that, by suitably enlarging the space of dependent variables,
$\alpha $ defines a differentiable covering
$\mathcal{V}^{(\infty )}\rightarrow \mathcal{E}^{(\infty )}$, with
$\mathcal{V}:=\left \{ \mathbf{F=0},\,\bar{D}_{x}V=XV,\,\bar{D}_{t}V=TV
\right \} $. In such an enlarged space, t{(\ref{eq:Matrix_ZCR})} can be also
understood as the integrability condition of an equation of the form
%
\begin{equation}
\bar{d}_{H}V=XV\,dx+TV\,dt,
\label{eq:dV}
\end{equation}
where $\bar{d}_{H}V=\bar{D}_{x}V\,dx+\bar{D}_{t}V\,dt$. Notice that
$V_{x}=\tilde{D}_{x}V$ and $V_{t}=\tilde{D}_{t}V$, in terms of the total
derivatives $\tilde{D}_{x}$ and $\tilde{D}_{t}$ in
$\mathcal{V}^{(\infty )}$.

Now, it is noteworthy to observe that linear transformations
$V\mapsto V^{S}:=SV$, by a nonsingular matrix valued function $S$ on
$\mathcal{E}^{(\infty )}$, naturally determine a $\mathcal{C}$-morphism
of $\mathcal{V}$. Under such transformations, t{(\ref{eq:Linear_problem})}
transforms as
\begin{equation*}
V_{x}^{S}=X^{S}V^{S},\qquad V_{t}^{S}=T^{S}V^{S},
\end{equation*}
with
\begin{equation*}
X^{S}:=\bar{D}_{x}\left (S\right )S^{-1}+SXS^{-1},\qquad T^{S}:=
\bar{D}_{t}\left (S\right )S^{-1}+STS^{-1}.
\end{equation*}
Accordingly, t{(\ref{eq:dV})} transforms as
$\bar{d}_{H}V^{S}=X^{S}V^{S}\,dx+T^{S}V^{S}\,dt$. These transformations
are usually referred to as \textit{gauge transformations}.

We have the following
%
\begin{lem}
\label{lem22}%
For any fixed $h\in \{1,...,\ell \}$, under the gauge transformation
$V\mapsto V'=\frac{1}{v^{h}}V$ equation t{(\ref{eq:dV})} rewrites as
%
\begin{equation}
\left \{
\begin{array}{l}
\bar{d}_{H}\left (\ln v^{h}\right )=\left [\alpha \right ]^{h}V_{}',
\vspace{10pt}
\\
\bar{d}_{H}V'=\alpha V'-\left (\left [\alpha \right ]^{h}V'\right )V',
\end{array}
\right .
\label{eq:Lema22-1}
\end{equation}
where $\left [\alpha \right ]^{h}$ denotes the $h$-th row of
$\alpha $. Thus, the first order system t{(\ref{eq:Linear_problem})} is equivalent
to the following one
%
\begin{equation}
\left \{
\begin{array}{l}
{\displaystyle \frac{v_{x}^{h}}{v^{h}}}=\left [X\right ]^{h}V',
\vspace{10pt}
\\
{\displaystyle \frac{v_{t}^{h}}{v^{h}}}=\left [T\right ]^{h}V',
\vspace{10pt}
\\
V'_{x}=XV_{}'-\left (\left [X\right ]^{h}V'\right )V',
\\
V'_{t}=TV'-\left (\left [T\right ]^{h}V'\right )V',
\end{array}
\right .
\label{eq:Lema22-2}
\end{equation}
where $\left [X\right ]^{h}$ and $\left [T\right ]^{h}$ denote the
$h$-th rows of $X$ and $T$, respectively.
\end{lem}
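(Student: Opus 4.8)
The plan is to treat this as a direct computation with the Leibniz rule for $\bar{d}_H$, exploiting that $V'=(v^h)^{-1}V$ is obtained from $V$ by the scalar gauge $S=(v^h)^{-1}I$ and that the $h$-th entry of $V'$ is identically $1$. Throughout I would keep careful track of which objects are scalars and which are vectors or matrices: $\alpha=X\,dx+T\,dt$ is a matrix-valued $1$-form, $V$ is a column vector, and $[\alpha]^h V$ (row times column) is a scalar-valued $1$-form that may be moved freely past vectors.

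First I would record the defining relation in the compact form $\bar{d}_H V=\alpha V$ (i.e. $\bar{d}_H V=XV\,dx+TV\,dt$) and extract its $h$-th component, namely $\bar{d}_H v^h=[\alpha]^h V$. Dividing by $v^h$ and using $\bar{d}_H(\ln v^h)=(\bar{d}_H v^h)/v^h$ together with linearity of $[\alpha]^h(\cdot)$ gives at once
\[
\bar{d}_H(\ln v^h)=\frac{[\alpha]^h V}{v^h}=[\alpha]^h V',
\]
which is the first asserted identity. Next I would differentiate $V'=(v^h)^{-1}V$ by the Leibniz rule,
\[
\bar{d}_H V'=\bar{d}_H\!\left((v^h)^{-1}\right)V+(v^h)^{-1}\bar{d}_H V,
\]
and substitute $\bar{d}_H((v^h)^{-1})=-(v^h)^{-1}\bar{d}_H(\ln v^h)$ together with $\bar{d}_H V=\alpha V$. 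Using the first identity and the relations $(v^h)^{-1}V=V'$ and $(v^h)^{-1}\alpha V=\alpha V'$, and noting that the scalar $1$-form $\bar{d}_H(\ln v^h)=[\alpha]^h V'$ commutes past $V'$, all factors of $v^h$ cancel and I obtain $\bar{d}_H V'=\alpha V'-([\alpha]^h V')V'$, the second identity.

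For the first-order system I would simply read off the $dx$- and $dt$-components of the two $\bar{d}_H$ identities, using $[\alpha]^h=[X]^h\,dx+[T]^h\,dt$ and $\bar{d}_H f=\tilde{D}_x f\,dx+\tilde{D}_t f\,dt$; this yields the four displayed equations for $v_x^h/v^h$, $v_t^h/v^h$, $V'_x$ and $V'_t$. To close the equivalence with (\ref{eq:Linear_problem}) I would observe that $V\mapsto(v^h,V')$ with $V=v^h V'$ and $(V')^h\equiv1$ is an invertible change of variables on $\{v^h\neq0\}$, and verify the converse direction directly: writing $V_x=(v^h V')_x=v_x^h V'+v^h V'_x$ and substituting the two $x$-equations, the cross terms cancel to give $V_x=v^h X V'=XV$, and similarly $V_t=TV$.

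The computation carries no genuine conceptual obstacle; the only points demanding care are the bookkeeping of scalar versus vector/matrix factors (so that $(v^h)^{-1}$ and the scalar $[\alpha]^h V'$ are commuted correctly) and the fact that the gauge $S=(v^h)^{-1}I$ involves the \emph{nonlocal} variable $v^h$ rather than a function on $\mathcal{E}^{(\infty)}$. For this reason I would not invoke the general gauge formulas $X^S=\bar{D}_x(S)S^{-1}+SXS^{-1}$ verbatim, but rather carry out the Leibniz computation above with the lifted total derivatives $\tilde{D}_x,\tilde{D}_t$ on $\mathcal{Y}^{(\infty)}$, which amounts to the same thing since $S$ is a scalar matrix.
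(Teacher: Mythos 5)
Your proposal is correct and follows essentially the same route as the paper, whose proof consists precisely of substituting $V=v^{h}V'$ into (\ref{eq:dV}); you merely spell out the Leibniz-rule computation and the component extraction that the paper leaves implicit. The extra care you take with the converse direction and with the scalar-versus-vector bookkeeping is sound but adds nothing beyond the paper's one-line argument.
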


\begin{proof}
The result follows by substituting $V=v^{h}V'$ (with $h$ fixed) in t{(\ref{eq:dV})}.
\end{proof}
  Since $V'=\frac{1}{v^{h}}V$, it is natural to write $V'$ as
\begin{equation*}
V'=(\rho ^{1},...,\rho ^{h-1},1,\rho ^{h},...,\rho ^{\ell -1})^{T},
\end{equation*}
where
\begin{equation*}
\rho ^{j}:=\left \{
\begin{array}{l@{\quad}l}
v^{j}/v^{h},\qquad & j<h,
\vspace{10pt}
\\
v^{j+1}/v^{h},\qquad & j>h.
\end{array}
\right .
\end{equation*}
It follows that the differentiable covering
$\mathcal{V}^{(\infty )}\rightarrow \mathcal{E}^{(\infty )}$ is the extension,
by means of the potential of a nonlocal conservation law, of a differentiable
covering
$\mathcal{Y}^{(\infty )}\rightarrow \mathcal{E}^{(\infty )}$ defined by
a system of $\ell -1$ Riccati-type equations. This generalises a fact already
discovered by Chern and Tenenblat in \cite{CT}, in the case of
$\mathfrak{sl}_{2}(\mathbb{R})$-valued ZCRs for equations describing pseudospherical
surfaces.

For instance, by taking $h=\ell $ for ease of notations, one has the following
%
\begin{prop}
\label{ZCR-Riccati_NLCL}%
To any zero-curvature representation $\alpha =X\,dx+T\,dt$ of
$\mathcal{E}:=\left \{ \mathbf{F=0}\right \} $, with
$\ell \times \ell $ matrices $X$ and $T$, it is associated the
$(\ell -1)$-dimensional differentiable covering defined by the system
\begin{equation*}
\left \{
\begin{array}{l}
\rho _{x}^{j}={\displaystyle \sum _{s=1}^{\ell -1}\left (-X_{\ell s}
\rho ^{s}\rho ^{j}+X_{js}\rho ^{s}\right )-X_{\ell \ell}\rho ^{j}+X_{j
\ell},\qquad j=1,...,\ell -1}
\vspace{10pt}
\\
\rho _{t}^{j}={\displaystyle \sum _{s=1}^{\ell -1}\left (-T_{\ell s}
\rho ^{s}\rho ^{j}+T_{js}\rho ^{s}\right )-T_{\ell \ell}\rho ^{j}+T_{j
\ell},\qquad j=1,...,\ell -1}
\vspace{10pt}
\\
\mathbf{F=0}.
\end{array}
\right .
\end{equation*}
In particular, with respect to this covering, $\mathcal{E}$ admits the
following nonlocal conservation law
\begin{equation*}
\mu =\left (\sum _{s=1}^{\ell -1}X_{\ell s}\rho ^{s}+X_{\ell \ell}
\right )dx+\left (\sum _{s=1}^{\ell -1}T_{\ell s}\rho ^{s}+T_{\ell
\ell}\right )dt.
\end{equation*}
\end{prop}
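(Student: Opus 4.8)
The plan is to specialize the preceding Lemma to the index $h=\ell$ and then simply read off the scalar components of the resulting matrix identities after substituting the explicit parametrization $V'=(\rho^{1},\ldots,\rho^{\ell-1},1)^{T}$. No new analytic input is needed beyond the Lemma and the ZCR condition (\ref{eq:Matrix_ZCR}); the proof is essentially a component-wise bookkeeping.

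First I would invoke the Lemma with $h=\ell$, which yields the two matrix equations
\[
V'_{x}=XV'-\left([X]^{\ell}V'\right)V',\qquad V'_{t}=TV'-\left([T]^{\ell}V'\right)V',
\]
together with $\bar{d}_{H}\left(\ln v^{\ell}\right)=[\alpha]^{\ell}V'$. With the chosen parametrization the $\ell$-th entry of $V'$ equals the constant $1$, so the $\ell$-th components of both matrix equations reduce to $0=0$ (indeed $(XV')_{\ell}-\left([X]^{\ell}V'\right)\cdot 1=0$), confirming that the reduction is consistent. Next, for each $j\in\{1,\ldots,\ell-1\}$ I would extract the $j$-th component of the $x$-equation. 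Using $(XV')_{j}=\sum_{s=1}^{\ell-1}X_{js}\rho^{s}+X_{j\ell}$, the scalar $[X]^{\ell}V'=\sum_{s=1}^{\ell-1}X_{\ell s}\rho^{s}+X_{\ell\ell}$, and $(V')_{j}=\rho^{j}$, this gives exactly
\[
\rho_{x}^{j}=\sum_{s=1}^{\ell-1}\left(-X_{\ell s}\rho^{s}\rho^{j}+X_{js}\rho^{s}\right)-X_{\ell\ell}\rho^{j}+X_{j\ell}.
\]
The identical manipulation applied to the $t$-equation, with $X$ replaced by $T$, produces the stated expression for $\rho_{t}^{j}$. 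The ZCR condition (\ref{eq:Matrix_ZCR}), equivalently the compatibility $[\tilde{D}_{x},\tilde{D}_{t}]=0$ of the total derivatives on $\mathcal{Y}^{(\infty)}$, is precisely what guarantees that these $2(\ell-1)$ Riccati equations are mutually consistent, so that $\tilde{\mathcal{C}}=\langle\tilde{D}_{x},\tilde{D}_{t}\rangle$ is involutive and the system genuinely defines an $(\ell-1)$-dimensional differentiable covering $\tau:\mathcal{Y}^{(\infty)}\rightarrow\mathcal{E}^{(\infty)}$.

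Finally, for the nonlocal conservation law I would read off the first equation of the Lemma. Since $\alpha=X\,dx+T\,dt$ has $\ell$-th row $[\alpha]^{\ell}=[X]^{\ell}dx+[T]^{\ell}dt$, evaluating $[\alpha]^{\ell}V'=\left([X]^{\ell}V'\right)dx+\left([T]^{\ell}V'\right)dt$ on the parametrization gives
\[
\mu=\left(\sum_{s=1}^{\ell-1}X_{\ell s}\rho^{s}+X_{\ell\ell}\right)dx+\left(\sum_{s=1}^{\ell-1}T_{\ell s}\rho^{s}+T_{\ell\ell}\right)dt,
\]
and the Lemma identifies $\mu=\bar{d}_{H}\left(\ln v^{\ell}\right)$. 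Closedness is then immediate from $\bar{d}_{H}^{2}=0$, so $\mu$ is a horizontal closed $1$-form; since its coefficients depend on the nonlocal variables $\rho^{s}$, it is a \emph{nonlocal} conservation law of $\mathcal{E}$ relative to $\tau$. I expect no serious obstacle anywhere in this argument, as it is a direct reading of the Lemma; the only point deserving a word of care is the closedness of $\mu$ on the Riccati covering itself, which one may either deduce from its exactness $\mu=\bar{d}_{H}\left(\ln v^{\ell}\right)$ on the extending covering by the potential $v^{\ell}$, or verify directly by checking $\tilde{D}_{t}\left(\sum_{s}X_{\ell s}\rho^{s}+X_{\ell\ell}\right)=\tilde{D}_{x}\left(\sum_{s}T_{\ell s}\rho^{s}+T_{\ell\ell}\right)$ using the Riccati equations and (\ref{eq:Matrix_ZCR}).
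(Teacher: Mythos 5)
Your proposal is correct and follows exactly the route the paper intends: Proposition \ref{ZCR-Riccati_NLCL} is presented as the immediate specialization of the preceding gauge-transformation Lemma to $h=\ell$, with the Riccati system and the conservation law obtained by reading off the components of $V'=(\rho^{1},\ldots,\rho^{\ell-1},1)^{T}$ and identifying $\mu=[\alpha]^{\ell}V'=\bar{d}_{H}\left(\ln v^{\ell}\right)$. Your additional remarks (the trivial $\ell$-th component, and closedness of $\mu$ via its exactness on the extended covering) are correct and only make explicit what the paper leaves implicit.
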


  Analogous results hold for any other $h=1,2,...,\ell -1$, with the corresponding
nonlocal variables
$\bar{\rho}_{1},\bar{\rho}_{2},...,\bar{\rho}_{\ell -1}$ reciprocally related,
for any choice of $h$, to the
$\rho _{1},\rho _{2},...,\rho _{\ell -1}$ above.

\begin{example}
\label{example_kdv_lax1_cont}%
We consider here KdV equation
$\mathcal{E}=\{z_{xxx}=-z_{t}-6zz_{x}\}$ with the ZCR
\begin{equation*}
\alpha =\left (
\begin{array}{c@{\quad}c}
0 & 1
\vspace{5pt}
\\
-\lambda -z\qquad & 0
\end{array}
\right )dx+\left (
\begin{array}{c@{\quad}c}
z_{x} & 4\,\lambda -2\,z
\vspace{5pt}
\\
-4\lambda ^{2}-2\,\lambda \,z+2\,z^{2}+z_{xx}\qquad & -z_{x}
\end{array}
\right )dt,\qquad \lambda \in \mathbb{R}.
\end{equation*}
In this case, the procedure described above, with
$\rho =v^{2}/v^{1}$, provides the $1$-dimensional differentiable covering
$\mathcal{Y}^{(\infty )}\rightarrow \mathcal{E}^{(\infty )}$ defined by
the system t{(\ref{eq:cov3_KdV})} of t{Example~\ref{example_KdV_lax1}}, together
with the nonlocal conservation law
$\mu =\rho dx+\left (z_{x}+\left (4\lambda -2z\right )\rho \right )dt$.
According to first equation of t{(\ref{eq:Lema22-1})}
$\mu =\bar{d}_{H}(\ln v^{1})$, thus $\rho =v_{x}^{1}/v^{1}$.
\end{example}

\begin{example}
\label{exa:Tzitzeica1}%
We consider here Tzitzeica equation
$\mathcal{E}=\{z_{xt}=-{{\mathrm{e}}^{-2\,z}}+{{\mathrm{e}}^{z}}\}$ with the ZCR
\begin{equation*}
\alpha =\left (
\begin{array}{c@{\quad}c@{\quad}c}
-z_{x} & 0 & \lambda
\\
\noalign{\vspace{4pt}}
\lambda & z_{x} & 0
\\
\noalign{\vspace{4pt}}
0 & \lambda & 0
\end{array}
\right )dx+\left (
\begin{array}{c@{\quad}c@{\quad}c}
0 & {\displaystyle \frac{1}{\lambda}{{\mathrm{e}}^{-2\,z}}} & 0
\\
\noalign{\vspace{4pt}}
0 & 0 & {\displaystyle \frac{1}{\lambda}{{\mathrm{e}}^{z}}}
\\
\noalign{\vspace{4pt}}
{\displaystyle \frac{1}{\lambda}{{\mathrm{e}}^{z}}} & 0 & 0
\end{array}
\right )dt,\qquad \lambda \in \mathbb{R}.
\end{equation*}
In this case, the procedure described above, with
$\rho ^{1}=v^{1}/v^{3}$ and $\rho ^{2}=v^{2}/v^{3}$, provides the
$2$-dimensional differentiable covering
$\mathcal{Y}^{(\infty )}\rightarrow \mathcal{E}^{(\infty )}$ defined by
the system t{(\ref{Riccati_TZ})} of t{Example~\ref{Covering_TZ}}, together with
the nonlocal conservation law
$\mu =\lambda{\rho ^{2}}\,dx+e^{z}{\rho ^{1}/\lambda}\,dt$.
In this case, according to first equation of t{(\ref{eq:Lema22-1})}
$\mu =\bar{d}_{H}(\ln v^{3})$, thus
$\rho ^{1}=\lambda e^{-z}v_{t}^{3}/v^{3}$ and
$\rho ^{2}=\lambda ^{-1}v_{x}^{3}/v^{3}$.
\end{example}

\section{Nonlocal pseudosymmetries and B\"acklund transformations}
\label{sec:last}

As discussed in Subsection \ref{subsec:Cov-BT}, a B\"{a}cklund transformation
from $\mathcal{E}$ to $\mathcal{E}'$ can be seen as a nonlocal
$\mathcal{C}$-morphism, from a differentiable extension
$\mathcal{V}$ of the equation $\mathcal{E}$ to the equation
$\mathcal{E}'$. Our aim here is to show how such transformations can result
from factorisations of differentiable extensions $\mathcal{V}$ with respect
to nonlocal pseudosymmetries. In these cases one specialises to
$\mathcal{V}$ the factorisation procedure described in the end of Subsection \ref{subsec:Pseudosymmetries}.

The basic assumption is that $\mathcal{E}$ admits a differentiable extension
$\mathcal{V}$ which has a pseudosymmetry $\bar{Y}$ (or an $r$-pseudosymmetry
$\{\bar{Y}_{1},...,\bar{Y}_{r}\}$) which leads to a quotient system
$\mathcal{Q}$ that, among its equations, has a copy of equation
$\mathcal{E }'$. In such a case, the B\"{a}cklund transformation
is determined by a suitable choice of some basic system of functionally
independent invariants $S=\{x'_{i}=\xi ^{i},\,z'{}^{j}=\nu ^{j}\}$ of the
exploited pseudosymmetry (resp., $r$-pseudosymmetry). Indeed, since the
concrete form of $\mathcal{Q}$ depends on the particular choice of
$S$, if one chooses another basic system of functionally independent invariants
$\hat{S}=\{\hat{x}'_{i}=\hat{\xi}^{i},\,\hat{z}'{}^{j}=\hat{\nu}^{j}
\}$ equivalent to $\{\xi ^{i},\,\nu ^{j}\}$, i.e., such that
$\{\hat{\xi}^{i}=A^{i}(\xi ^{i},\nu ^{j}),\,\hat{\nu}^{j}=B^{j}(\xi ^{i},
\nu ^{j})\}$ and
$\det \left (\tilde{D}_{s}\hat{\xi}^{i}\right )\neq 0$ (where
$\tilde{D}_{s}$ are the total derivatives on
$\mathcal{V}^{(\infty )}$), one would obtain a system
$\mathcal{\hat{Q}}$ which is point equivalent to $\mathcal{Q}$, through
the point transformation
$\{\hat{x}'_{i}=A^{i}(x',z'),\,\hat{z}'{}^{j}=B^{j}(x',z')\}$.

On the other hand, in view of t{Proposition~\ref{prop:pseudoE}} and t{Proposition~\ref{prop:pseudoE-1}}, the infinitesimal conditions t{(\ref{eq:Y_tang})} and
t{(\ref{eq:Y_tang-1})} for pseudosymmetries and $r$-pseudosymmetries are linear
only with respect to $\varphi $ and $\Phi $. Thus, in general the determination
of pseudosymmetries and $r$-pseudosymmetries of
$\mathcal{V }$ is feasible if one already knows some
$\mu $ and $\gamma $. It follows that, for the practical application of
the procedure described above, it may be necessary to already have available
the possible differential coverings $\mathcal{V}$ together with some corresponding
$1$-forms $\mu $ and $\gamma $, defined in $\mathcal{V}$.

Based on the results of the previous section, it is therefore clear that
in the case of an equation with two independent variables a particularly
advantageous situation is that of equations admitting ZCRs: indeed, starting
from these representations, the Riccati-type differentiable coverings together
with associated horizontally closed $1$-forms $\mu $ remain automatically
determined and one can directly pass to search for corresponding pseudosymmetries.

Unfortunately, one does not have yet an analogous result for $r$-pseudosymmetries.
Hence, calculating the possible $r$-pseudosymmetries admitted by the Riccati-type
differentiable coverings, associated with ZCRs, remains rather complicated;
as is already the case without considering any differentiable covering
so much so that in the literature it is not easy to find significant examples
of $r$-pseudosymmetries of differential equations. Yet we believe that
r-pseudosymmetries should play an important role when the Riccati-type
differentiable covering has dimension greater than $1$, because the larger
number of covering equations in this case may lead to greater restrictions
on the existence of pseudosymmetries; therefore, in the case of
$\ell \times \ell $ matrix-valued ZCR, with $\ell >2$, the more general
$r$-pseudosymmetries should play a key role. For instance, in t{Example~\ref{exa:Tzitzeica2}}, we show that the auto-B\"{a}cklund transformation
of the Tzitzeica equation found in \cite{Brez} can be obtained by using
a $2$-pseudosymmetry of the covering obtained from an
$\mathfrak{sl}_{3}(\mathbb{R})$-valued ZCR. Thus, the computation of
$r$-pseudosymmetries deserves further detailed study. The same is true
for the more general question on the role that pseudosymmetries and
$r$-pseudosymmetries can have in the integrability property of an equation;
in particular, we believe that they can play a fundamental role in studying
B\"{a}cklund transformations in higher dimensions. These and other questions
will be addressed in future works.

\subsection{Examples}
\label{sec4.1}

In this subsection, we will analyze several examples of equations with
two independent variables for which, starting from a ZCR, it is possible
to determine pseudosymmetries and subsequently B\"{a}cklund transformations,
following the procedure described above. Moreover, in the last example,
we derive an auto-B\"{a}cklund transformation of the Tzitzeica equation
by using a $2$-pseudosymmetry. Taken together, these examples describe
not only the degree of generality of the method, but demonstrate also its
practical applicability. Throughout this section $x_{1}=x$ and
$x_{2}=t$.

\begin{example}
\label{exmp27}
We consider here KdV $\mathcal{E}=\{z_{xxx}=-z_{t}-6zz_{x}\}$ and show
how the invariants of a nonlocal pseudosymmetry $\bar{Y}$ can be used to
determine an auto-B\"{a}cklund transformation of this equation. As already
shown in t{Example~\ref{example_kdv_lax1_cont}}, $\mathcal{E}$ admits the
nonlocal conservation law
$\mu =\rho dx+\left (z_{x}+\left (4\lambda -2z\right )\rho \right )dt$
in the $1$-dimensional differentiable covering
$\mathcal{Y}^{(\infty )}\rightarrow \mathcal{E}^{(\infty )}$ defined by
\begin{equation*}
\mathcal{Y}:\;\left \{
\begin{array}{l}
\rho _{x}=-\rho ^{2}-z-\lambda ,
\vspace{5pt}
\\
\rho _{t}=-2(2\lambda -z)\rho ^{2}-2z_{x}\rho +z_{xx}+2z^{2}-2
\lambda z-4\lambda ^{2},
\vspace{5pt}
\\
z_{xxx}=-z_{t}-6zz_{x}.
\end{array}
\right .
\end{equation*}
We are searching for a regular $\mathcal{C}$-morphism from
$\mathcal{Y}$ to $\mathcal{E}'$, with $\mathcal{E}'$ being a copy of
$\mathcal{E}$, defined by a system
$\{x'=\xi ^{1},\;t'=\xi ^{2},\;z'=\nu \}$ such that
$\xi ^{1},\xi ^{2},\nu $ are invariants of a pseudosymmetry $Y$ of
$\mathcal{Y}$.

First, by searching for pseudosymmetries
$\bar{Y}:=\left .Y\right |_{\mathcal{Y}^{(\infty )}}$ of
$\mathcal{Y}$ determined by the pseudoprolongations $Y$ of vector fields
of the form
\begin{equation*}
a_{1}(x,t,z,\rho )\partial _{x}+a_{2}(x,t,z,\rho )\partial _{t}+b^{1}(x,t,z,
\rho )\partial _{z}+b^{2}(x,t,z,\rho )\partial _{\rho},
\end{equation*}
relatively to conservation laws $c\,\mu $, with
$c\in \mathbb{R}-\{0\}$, one finds that the most general pseudosymmetry
of this type with $b^{1}\neq 0$ is a constant multiple of
\begin{equation*}
\bar{Y}=\rho \partial _{z}-\frac{1}{4}\partial _{\rho}+\ldots \,,
\end{equation*}
with $c=2$. Such a pseudosymmetry of $\mathcal{Y}$ can be seen as a nonlocal
pseudosymmetry of $\mathcal{E}$, with respect to the differentiable covering
defined by $\mathcal{Y}$. Thus, being $\{x,\;t,\;z+2\rho ^{2}\}$ a basic
system of zeroth order invariants of $\bar{Y}$, one has that the auto-B\"{a}cklund
transformation of KdV discussed in t{Example~\ref{exa:KdV_B11}}
\begin{equation*}
x'=x,\qquad t'=t,\qquad z'=-z-2\rho ^{2}{-2\lambda},
\end{equation*}
is of the type described above, i.e., it is provided by the invariants
of the pseudosymmetry $\bar{Y}$.
\end{example}

\begin{example}
\label{exa:DT_KdV}%
We show here that also the Darboux transformation discussed in t{Example~\ref{exa:KdV_B21}} is provided by the invariants of a nonlocal pseudosymmetry
$\bar{Y}$. To this end one can consider the $2$-dimensional differentiable
covering of KdV $\mathcal{E}=\{z_{xxx}=-z_{t}-6zz_{x}\}$ defined by
\begin{equation*}
\mathcal{Y}:\;\left \{
\begin{array}{l}
\hat{\rho}_{x}=-\hat{\rho}^{2}-z-\hat{\lambda},
\vspace{5pt}
\\
\hat{\rho}_{t}=-2(2\hat{\lambda}-z)\hat{\rho}^{2}-2z_{x}\hat{\rho}+z_{xx}+2z^{2}-2
\hat{\lambda}z-4\hat{\lambda}^{2},
\vspace{5pt}
\\
\rho _{x}=-\rho ^{2}-z-\lambda ,
\vspace{5pt}
\\
\rho _{t}=-2(2\lambda -z)\rho ^{2}-2z_{x}\rho +z_{xx}+2z^{2}-2
\lambda z-4\lambda ^{2},
\vspace{5pt}
\\
z_{xxx}=-z_{t}-6zz_{x},
\end{array}
\right .
\end{equation*}
which coincides with t{(\ref{eq:cov2_KdV-1})} and is obtained by doubling
the Riccati-type system of previous example. Hence, being
$\hat{\mu} =\hat{\rho} dx+\left (z_{x}+\left (4\hat{\lambda} -2z\right )\hat{\rho} \right )dt$
still a conservation law for $\mathcal{Y}$, one can search for pseudosymmetries
$\bar{Y}:=\left .Y\right |_{\mathcal{Y}^{(\infty )}}$ of
$\mathcal{Y}$ determined by the pseudoprolongations $Y$ of vector fields
of the form
\begin{eqnarray*}[ll]
a_{1}(x,t,z,\rho ,\hat{\rho})\partial _{x}+a_{2}(x,t,z,\rho ,
\hat{\rho})\partial _{t}+b^{1}(x,t,z,\rho ,\hat{\rho})\partial _{z}+b^{2}(x,t,z,
\rho ,\hat{\rho})\partial _{\rho}\\
\quad { }+b^{3}(x,t,z,\rho ,\hat{\rho})
\partial _{\hat{\rho}}\;,
\end{eqnarray*}
relatively to $2\,\hat{\mu} $. In this case, when
$\hat{\lambda}\neq \lambda $ and $\hat{\rho}\neq \rho $, one finds that
the most general pseudosymmetry of this type with $b^{1}\neq 0$ is a constant
multiple of
\begin{equation*}
\bar{Y}=\hat{\rho}\partial _{z}-\frac{1}{4}\left (1+
\frac{(\rho -\hat{\rho})^{2}}{\lambda -\hat{\lambda}}\right )
\partial _{\rho}-\frac{1}{4}\partial _{\hat{\rho}}+\ldots \,.
\end{equation*}
Such a pseudosymmetry of $\mathcal{Y}$ can be seen as a nonlocal pseudosymmetry
of $\mathcal{E}$, with respect to the differentiable covering defined by
$\mathcal{Y}$. Thus, being
$\{x,\;t,\;z+2\hat{\rho}^{2},\hat{\rho}+(\hat{\lambda}-\lambda )/(
\hat{\rho}-\rho )\,\}$ a basic system of zeroth order invariants of
$\bar{Y}$, one has that the Darboux transformation discussed in t{Example~\ref{exa:KdV_B21}}
\begin{equation*}
x'=x,\qquad t'=t,\qquad z'=-z-2\hat{\rho}^{2}{-2
\hat{\lambda}},\qquad \rho '=-
\frac{\hat{\lambda}-\lambda}{\hat{\rho}-\rho}-\hat{\rho},
\end{equation*}
is provided by the invariants of the pseudosymmetry $\bar{Y}$. In this
case the transformation describes a regular $\mathcal{C}$-morphism from
$\mathcal{Y}$ to $\mathcal{E}'$, where by $\mathcal{E}'$ now we mean the
system
\begin{equation*}
\mathcal{E}':\;\left \{
\begin{array}{l}
\rho '_{x}=-\rho '{}^{2}-z'-\lambda ,
\vspace{5pt}
\\
\rho '_{t}=-2(2\lambda -z')\rho '{}^{2}-2z'_{x}\rho '+z'_{xx}+2z'{}^{2}-2
\lambda z'-4\lambda ^{2},
\vspace{5pt}
\\
z'_{xxx}=-6z'z'_{x}-z'_{t}.
\end{array}
\right .
\end{equation*}
\end{example}

\begin{example}
\label{exmp29}
We consider here sine-Gordon equation
$\mathcal{E}=\{z_{xt}=\sin z\}$ and show how the invariants of a nonlocal
pseudosymmetry $\bar{Y}$ can be used to determine an auto-B\"{a}cklund
transformation of this equation. To this end we consider the following
ZCR
\begin{equation*}
\alpha =\frac{1}{2}\left (
\begin{array}{l@{\quad}l}
\eta \  & -z_{x}
\vspace{5pt}
\\
z_{x}\qquad & -{\displaystyle \eta}
\end{array}
\right )dx+\frac{1}{2\eta}\left (
\begin{array}{l@{\quad}l}
\cos \,z
\vspace{5pt}
& \sin \,z
\\
\sin \,z\qquad & -\cos \,z
\end{array}
\right )dt,\qquad \eta \in \mathbb{R}-\{0\}.
\end{equation*}
In this case, the procedure described in Section~\ref{subsec:Riccati-type-differentiable-cove}, with
$\rho =v^{1}/v^{2}$, provides the $1$-dimensional differentiable covering
$\mathcal{Y}^{(\infty )}\rightarrow \mathcal{E}^{(\infty )}$ defined by
\begin{equation*}
\mathcal{Y}:\;\left \{
\begin{array}{l}
\rho _{x}=-\frac{1}{2}z_{x}\rho ^{2}+\eta \rho -\frac{1}{2}\,z_{x},
\vspace{5pt}
\\
\rho _{t}=\frac{1}{2\eta}\sin z\,\left (1-\rho ^{2}\right )+
\frac{1}{\eta}\cos z\,\rho ,
\vspace{5pt}
\\
z_{xt}=\sin z,
\end{array}
\right .
\end{equation*}
together with the nonlocal conservation law
\begin{equation*}
\mu =\frac{1}{2}\left (\rho z_{x}-\eta \right )\,dx+\frac{1}{2\eta}
\left (\rho \,\sin z-\cos z\right )\,dt.
\end{equation*}
We are searching for a regular $\mathcal{C}$-morphism from
$\mathcal{Y}$ to $\mathcal{E}'$, with $\mathcal{E}'$ being a copy of
$\mathcal{E}$, defined by a system
$\{x'=\xi ^{1},\;t'=\xi ^{2},\;z'=\nu \}$ such that
$\xi ^{1},\xi ^{2},\nu $ are invariants of a pseudosymmetry $Y$ of
$\mathcal{Y}$.

First, by searching for pseudosymmetries
$\bar{Y}:=\left .Y\right |_{\mathcal{Y}^{(\infty )}}$ of
$\mathcal{Y}$ determined by the pseudoprolongations $Y$ of vector fields
of the form
\begin{equation*}
a_{1}(x,t,z,\rho )\partial _{x}+a_{2}(x,t,z,\rho )\partial _{t}+b^{1}(x,t,z,
\rho )\partial _{z}+b^{2}(x,t,z,\rho )\partial _{\rho},
\end{equation*}
relatively to conservation laws $c\,\mu $, with
$c\in \mathbb{R}-\{0\}$, one finds that the most general pseudosymmetry
of this type with $b^{1}\neq 0$ is a constant multiple of
\begin{equation*}
\bar{Y}=\left (\rho ^{2}+1\right )\partial _{z}-\frac{\left(\rho ^{2}+1\right)^{2}}{4}
\,\partial _{\rho}+\ldots \,,
\end{equation*}
with $c=2$. Such a pseudosymmetry of $\mathcal{Y}$ can be seen as a nonlocal
pseudosymmetry of $\mathcal{E}$, with respect to the differentiable covering
defined by $\mathcal{Y}$. Thus, being
$\{x,\,t,\,z+4\,\arctan \rho \}$ a basic system of zeroth order invariants
of $\bar{Y}$, one is naturally lead to consider
$\xi ^{1}=x,\,\xi ^{2}=t$ and by taking $\nu =z+4\,\arctan \rho $ one obtains
$z'_{xt}-\sin z'=0$, whenever $\{z,\rho \}$ is a solution of
$\mathcal{Y}$. Hence, one has the auto-B\"{a}cklund transformation of
$\mathcal{E}$
\begin{equation*}
x'=x,\qquad t'=t,\qquad z'=z+4\,\arctan \rho ,
\end{equation*}
provided by the invariants of the pseudosymmetry $\bar{Y}$.
\end{example}

\begin{example}
\label{exmp30}
We consider here the short-pulse equation
$\mathcal{E}=\{z_{xt}=\frac{1}{2}z^{2}z_{xx}+zz_{x}^{2}+z\}$ and show how
the invariants of a nonlocal pseudosymmetry $\bar{Y}$ can be used to determine
an auto-B\"{a}cklund transformation of this equation. To this end we consider
the following ZCR
\begin{equation*}
\alpha =\frac{1}{4\eta}\left (
\begin{array}{l@{\quad}l}
1\quad & z_{x}
\vspace{5pt}
\\
z_{x} & -1
\end{array}
\right )dx+\frac{1}{8\eta}\left (
\begin{array}{l@{\quad}l}
8\eta ^{2}+z^{2} & z\left (zz_{x}-4\,\eta \right )
\vspace{5pt}
\\
z\left (zz_{x}+4\,\eta \right )\quad & -8\eta ^{2}-z^{2}
\end{array}
\right )dt,\qquad \eta \in \mathbb{R}-\{0\}.
\end{equation*}
In this case, the procedure described in Section~\ref{subsec:Riccati-type-differentiable-cove}, with
$\rho =v^{1}/v^{2}$, provides the $1$-dimensional differentiable covering
$\mathcal{Y}^{(\infty )}\rightarrow \mathcal{E}^{(\infty )}$ defined by
\begin{equation*}
\mathcal{Y}:\;\left \{
\begin{array}{l}
{\displaystyle \rho _{x}=-\frac{z_{x}}{4\eta}\rho ^{2}+
\frac{1}{2\eta}\,\rho +\frac{z_{x}}{4\eta}},
\vspace{10pt}
\\
{\displaystyle \rho _{t}=-
\frac{z\left (zz_{x}+4\,\eta \right )}{8\eta}\rho ^{2}+
\frac{\left (8\,\eta ^{2}+z^{2}\right )}{4\eta}\rho -
\frac{z\left (-zz_{x}+4\,\eta \right )}{8\eta}},
\vspace{10pt}
\\
{\displaystyle z_{xt}=\frac{1}{2}z^{2}z_{xx}+zz_{x}^{2}+z,}
\end{array}
\right .
\end{equation*}
together with the nonlocal conservation law
\begin{equation*}
\mu =\frac{1}{4\eta}\left (z_{x}\rho -1\right )dx+\frac{1}{8\eta}
\left (z^{2}z_{x}\,\rho +4\eta \,z\,\rho -z^{2}-8\,\eta ^{2}\right )dt.
\end{equation*}
We are searching for a regular $\mathcal{C}$-morphism from
$\mathcal{Y}$ to $\mathcal{E}'$, with $\mathcal{E}'$ being a copy of
$\mathcal{E}$, defined by a system
$\{x'=\xi ^{1},\;t'=\xi ^{2},\;z'=\nu \}$ such that
$\xi ^{1},\xi ^{2},\nu $ are invariants of a pseudosymmetry $Y$ of
$\mathcal{Y}$.

First, by searching for pseudosymmetries
$\bar{Y}:=\left .Y\right |_{\mathcal{Y}^{(\infty )}}$ of
$\mathcal{Y}$ determined by the pseudoprolongations $Y$ of vector fields
of the form
\begin{equation*}
a_{1}(x,t,z,\rho )\partial _{x}+a_{2}(x,t,z,\rho )\partial _{t}+b^{1}(x,t,z,
\rho )\partial _{z}+b^{2}(x,t,z,\rho )\partial _{\rho},
\end{equation*}
relatively to conservation laws $c\,\mu $, with
$c\in \mathbb{R}-\{0\}$, one finds that the most general pseudosymmetry
of this type with $b^{1}\neq 0$ is a constant multiple of
\begin{equation*}
\bar{Y}=-2\rho \partial _{x}+\left (\rho ^{2}-1\right )\partial _{z}-
\frac{1}{8\eta}\left (\rho ^{2}+1\right )^{2}\partial _{\rho}+\ldots
\,,
\end{equation*}
with $c=2$. Such a pseudosymmetry of $\mathcal{Y}$ can be seen as a nonlocal
pseudosymmetry of $\mathcal{E}$, with respect to the differentiable covering
defined by $\mathcal{Y}$. Thus, being
$\{x+8\eta /(\rho ^{2}+1),\,t,z-8\eta \rho /(\rho ^{2}+1)\}$ a basic system
of zeroth order invariants of $\bar{Y}$, one is naturally lead to consider
$\xi ^{1}=x+8\eta /(\rho ^{2}+1),\,\xi ^{2}=t$ and by taking
$\nu =-z+8\eta \rho /(\rho ^{2}+1)$ one obtains
$z'_{x't'}-\frac{1}{2}z'{}^{2}z'_{x'x'}-z'z'{}_{x'}^{2}-z'=0$, whenever
$\{z,\rho \}$ is a solution of $\mathcal{Y}$. Hence, one has the auto-B\"{a}cklund
transformation of $\mathcal{E}$
\begin{equation*}
x'=x+8\frac{\eta}{\rho ^{2}+1},\qquad t'=t,\qquad z'=-z+8\eta
\frac{\rho}{\rho ^{2}+1},
\end{equation*}
provided by the invariants of the pseudosymmetry $\bar{Y}$.
\end{example}

\begin{example}
\label{exmp31}
We consider here Camassa-Holm equation
$\mathcal{E}=\{z_{txx}=3\,z\,z_{x}-z\,z_{xxx}-2\,z_{x}\,z_{xx}+z_{t}
\}$ and show how the invariants of a nonlocal pseudosymmetry
$\bar{Y}$ can be used to determine an auto-B\"{a}cklund transformation
of this equation. To this end we consider the following ZCR
\begin{eqnarray*}
\alpha &=&{\displaystyle \frac{1}{2}}\left (
\begin{array}{l@{\quad}l}
0
\vspace{5pt}
& -{\displaystyle \frac{1}{\eta}}
\\
-z+z_{xx}-\eta & \quad 0
\end{array}
\right )\,dx\\
&&{}+{\displaystyle \frac{1}{2}}\left (
\begin{array}{l@{\quad}l}
z_{x}
\vspace{5pt}
& {\displaystyle \frac{1}{\eta}}(z-2\,\eta )
\\
z^{2}-z\,z_{xx}-\eta \,z-2\eta ^{2} & \quad -z_{x}
\end{array}
\right )dt,\quad  {\eta \in \mathbb{R}-\{0\}.}%
\end{eqnarray*}
In this case, the procedure described in Section~\ref{subsec:Riccati-type-differentiable-cove}, with
$\rho =v^{2}/v^{1}$, provides the $1$-dimensional differentiable covering
$\mathcal{Y}^{(\infty )}\rightarrow \mathcal{E}^{(\infty )}$ defined by
%
\begin{equation}
\mathcal{Y}:\;\left \{ {\displaystyle
\begin{array}{l}
\rho _{x}={\displaystyle \frac{1}{2\eta}\rho ^{2}+\frac{z_{xx}}{2}-
\frac{\eta+z}{2}},
\vspace{10pt}
\\
\rho _{t}={\displaystyle -\frac{\left (-2\,\eta +z\right )}{2\,\eta}
\rho ^{2}-z_{x}\,\rho -\frac{z\,z_{xx}}{2}-
\frac{2\,\eta ^{2}+\eta \, z-z^{2}}{2}},
\vspace{10pt}
\\
z_{txx}=3\,z\,z_{x}-z\,z_{xxx}-2\,z_{x}\,z_{xx}+z_{t},
\end{array}
}\right .
\label{eq:Riccati-CH}
\end{equation}
together with the nonlocal conservation law
%
\begin{equation}
\mu =-\frac{\rho}{2\eta}\,dx+\left (\frac{z_{x}}{2}-\left (1-
\frac{z}{2\eta}\right )\rho \right )\,dt.
\label{eq:CL_CH}
\end{equation}
We are searching for a regular $\mathcal{C}$-morphism from
$\mathcal{Y}$ to $\mathcal{E}'$, with $\mathcal{E}'$ being a copy of
$\mathcal{E}$, defined by a system
$\{x'=\xi ^{1},\;t'=\xi ^{2},\;z'=\nu \}$ such that
$\xi ^{1},\xi ^{2},\nu $ are invariants of a pseudosymmetry $Y$ of
$\mathcal{Y}$.

In this case, if one searches for pseudosymmetries
$\bar{Y}:=\left .Y\right |_{\mathcal{Y}^{(\infty )}}$ of
$\mathcal{Y}$ determined by the pseudoprolongations $Y$ of (relative) vector
fields of the form
\begin{equation*}
a_{1}(x,t,z,\rho )\partial _{x}+a_{2}(x,t,z,\rho )\partial _{t}+b^{1}(x,t,z,z_{x},
\rho )\partial _{z}+b^{2}(x,t,z,z_{x},\rho )\partial _{\rho},
\end{equation*}
relatively to conservation laws $c\,\mu $, with
$c\in \mathbb{R}-\{0\}$, one finds that the most general pseudosymmetry
of this type with $b^{1}\neq 0$ is a constant multiple of
\begin{equation*}
\bar{Y}=-\partial _{x}+(2\rho -z_{x})\,\partial _{z}+
\frac{\left (\eta ^{2}-\rho ^{2}\right )}{2\eta}\partial _{\rho}+
\ldots \,,
\end{equation*}
with $c=2$. Such a pseudosymmetry of $\mathcal{Y}$ can be seen as a nonlocal
pseudosymmetry of $\mathcal{E}$, with respect to the differentiable covering
defined by $\mathcal{Y}$. Thus, being
\begin{eqnarray*}[ll]
\xi ^{1}=x-\ln \left (\frac{\rho -\eta}{\rho +\eta}\right ),\quad
\xi ^{2}=t,\quad \nu ^{1}=\left (
\frac{\rho ^{2}+\eta ^{2}}{\rho ^{2}-\eta ^{2}}\right )z+
\frac{2\,\eta \,\rho}{\rho ^{2}-\eta ^{2}}\,z_{x},\\ \nu ^{2}=
\frac{2\,\eta \,\rho}{\rho ^{2}-\eta ^{2}}z+\left (
\frac{\rho ^{2}+\eta ^{2}}{\rho ^{2}-\eta ^{2}}\right )\,z_{x}-2\,\rho,
\end{eqnarray*}
a basic system of first-order order invariants of $\bar{Y}$, one finds
that in this case the system $\mathcal{Q}$ (the factorisation of
$\mathcal{E}$ by $\bar{Y}$) reads
\begin{equation*}
\mathcal{Q}:\quad \left \{
\begin{array}{l}
\nu _{\xi ^{1}}^{1}=\nu ^{2},
\vspace{10pt}
\\
\nu _{\xi ^{1}\xi ^{2}}^{2}=\left (-6\,\eta +3\,\nu ^{1}-2\,\nu _{
\xi ^{1}}^{2}\right )\nu ^{2}+\left (2\,\eta -\nu ^{1}\right )\nu _{
\xi ^{1}\xi ^{1}}^{2}+\nu _{\xi ^{2}}^{1}.
\vspace{10pt}
\end{array}
\right .
\end{equation*}
Therefore, by taking
%
\begin{equation}
x'=x-\ln \left (\frac{\rho -\eta}{\rho +\eta}\right ),\qquad t'=t,
\qquad z'=\nu ^{1}-2\eta =\left (
\frac{\rho ^{2}+\eta ^{2}}{\rho ^{2}-\eta ^{2}}\right )z+
\frac{2\,\eta \,\rho}{\rho ^{2}-\eta ^{2}}\,z_{x}{-2
\eta},
\label{eq:ABT-CH}
\end{equation}
one gets that
$z'_{t'x'x'}-3\,z'\,z'_{x'}+z'\,z'_{x'x'x'}+2\,z'_{x'}\,z'_{x'x'}-z'_{t'}=0$,
whenever $\{z,\rho \}$ is a solution of $\mathcal{Y}$. Hence, by means
of t{(\ref{eq:ABT-CH})}, the invariants of $\bar{Y}$ determine an auto-B\"{a}cklund
transformation of $\mathcal{E}$. This auto-B\"{a}cklund transformation
coincides with that found in \cite{Rasin_Schiff}, by means of a different
approach.
\end{example}

\begin{example}
\label{exmp32}
We show here that, by following an approach similar to that used in t{Example~\ref{exa:DT_KdV}}, one can also find a Darboux transformation for the Camassa-Holm
$\mathcal{E}=\{z_{txx}=3\,z\,z_{x}-z\,z_{xxx}-2\,z_{x}\,z_{xx}+z_{t}
\}$. To this end one can consider the $2$-dimensional differentiable covering
of $\mathcal{E}$ defined now by
\begin{equation*}
\mathcal{Y}:\;\left \{ {\displaystyle
\begin{array}{l}
{\displaystyle \hat{\rho}{}_{x}=\frac{1}{2\hat{\eta}}\hat{\rho}^{2}+
\frac{z_{xx}}{2}-
\frac{{\hat{\eta}}+z}{2}},
\vspace{10pt}
\\
{\displaystyle \hat{\rho}{}_{t}=-
\frac{\left (-2\,\hat{\eta}+z\right )}{2\,\hat{\eta}}\hat{\rho}^{2}-z{}_{x}
\,\hat{\rho}-\frac{z\,z{}_{xx}}{2}-
\frac{2\,\hat{\eta}^{2}+\hat{\eta}\,z-z{}^{2}}{2}},
\vspace{10pt}
\\
{\displaystyle \rho _{x}=\frac{1}{2\eta}\rho ^{2}+\frac{z_{xx}}{2}-
\frac{{\eta}+z}{2}},
\vspace{10pt}
\\
{\displaystyle \rho _{t}=-\frac{\left (-2\,\eta +z\right )}{2\,\eta}
\rho ^{2}-z_{x}\,\rho -\frac{z\,z_{xx}}{2}-
\frac{2\,\eta ^{2}+\eta\,z-z^{2}}{2}},
\vspace{10pt}
\\
z_{txx}=3\,z\,z_{x}-z\,z_{xxx}-2\,z_{x}\,z_{xx}+z_{t},
\end{array}
}\right .
\end{equation*}
which is obtained by doubling the Riccati-type system t{(\ref{eq:Riccati-CH})}.
Hence, being t{(\ref{eq:CL_CH})} still a conservation law for the new system
$\mathcal{Y}$, one can search for pseudosymmetries
$\bar{Y}:=\left .Y\right |_{\mathcal{Y}^{(\infty )}}$ of
$\mathcal{Y}$ determined by the pseudoprolongations $Y$ of (relative) vector
fields of the form
\begin{eqnarray*}[ll]
a_{1}(x,t,z,\rho ,\hat{\rho})\partial _{x}+a_{2}(x,t,z,\rho ,
\hat{\rho})\partial _{t}+b^{1}(x,t,z,z_{x},\rho ,\hat{\rho})\partial _{z}+b^{2}(x,t,z,z_{x},
\rho ,\hat{\rho})\partial _{\rho},\\
\quad{}+b^{3}(x,t,z,z_{x},\rho ,\hat{\rho})
\partial _{\hat{\rho}}\;,
\end{eqnarray*}
relatively to $2\,\mu $. In this case, when $\hat{\eta}\neq \eta $ and
$\hat{\rho}\neq \rho $, one finds that the most general pseudosymmetry
of this type with $b^{1}\neq 0$ is a constant multiple of
\begin{equation*}
\bar{Y}={\displaystyle -\partial _{x}+(2\rho -z_{x})\,\partial _{z}+
\frac{\left (\eta ^{2}-\rho ^{2}\right )}{2\eta}\partial _{\rho}+
\frac{\eta ^{2}\hat{\eta}-\eta \,\hat{\eta}^{2}-2\eta \,\rho \,\hat{\rho}+\hat{\eta}\,\rho ^{2}+\eta \,\hat{\rho}^{2}}{2\,\eta \,\left (\eta -\hat{\eta}\right )}
\partial _{\hat{\rho}}+\ldots \,.}
\end{equation*}
Such a pseudosymmetry of $\mathcal{Y}$ can be seen as a nonlocal pseudosymmetry
of $\mathcal{E}$, with respect to the differentiable covering defined by
$\mathcal{Y}$. Thus, being
\begin{equation*}
\begin{array}{l}
{\displaystyle \xi ^{1}=x-\ln \left (\frac{\rho -\eta}{\rho +\eta}
\right ),\quad \xi ^{2}=t,\quad \nu ^{1}=\left (
\frac{\rho ^{2}+\eta ^{2}}{\rho ^{2}-\eta ^{2}}\right )z+
\frac{2\,\eta \,\rho}{\rho ^{2}-\eta ^{2}}\,z_{x},}
\vspace{10pt}
\\
{\displaystyle \nu ^{2}=\frac{2\,\eta \,\rho}{\rho ^{2}-\eta ^{2}}z+
\left (\frac{\rho ^{2}+\eta ^{2}}{\rho ^{2}-\eta ^{2}}\right )\,z_{x}-2\,\rho,
\quad \nu ^{3}=
\frac{\hat{\eta}\,\rho ^{2}-\hat{\eta}\,\rho \hat{\rho}-\eta \,\hat{\eta}\,\left (\eta -\hat{\eta}\right )}{\eta \,\hat{\rho}-\hat{\eta}\,\rho},}
\end{array}
\end{equation*}
a basic system of first-order invariants of $\bar{Y}$, one finds
that in this case
%
\begin{eqnarray}[ll]
\nonumber
x'=x-\ln \left (\frac{\rho -\eta}{\rho +\eta}\right ),\qquad t'=t,
\qquad z'=\left (\frac{\rho ^{2}+\eta ^{2}}{\rho ^{2}-\eta ^{2}}
\right )z+\frac{2\,\eta \,\rho}{\rho ^{2}-\eta ^{2}}\,z_{x}{
-2\eta},\\ \rho '=
\frac{\hat{\eta}\,\rho ^{2}-\hat{\eta}\,\rho \hat{\rho}-\eta \,\hat{\eta}\,\left (\eta -\hat{\eta}\right )}{\eta \,\hat{\rho}-\hat{\eta}\,\rho},
\label{eq:ABT-CH-1}
\end{eqnarray}
defines a regular $\mathcal{C}$-morphism from $\mathcal{Y}$ to the system
\begin{equation*}
\mathcal{Y}':\;\left \{
\begin{array}{l}
{\displaystyle \rho '_{x'}=\frac{1}{2\hat{\eta}}\rho '{}^{2}+
\frac{z'_{x'x'}}{2}-\frac{{\hat{\eta}}+z'}{2},}
\vspace{10pt}
\\
{\displaystyle \rho '_{t'}=-
\frac{\left (-2\,\hat{\eta} +z'\right )}{2\,\hat{\eta}}\rho '{}^{2}-z'_{x'}\,
\rho '-\frac{z'\,z'_{x'x'}}{2}-
\frac{2\,\hat{\eta} ^{2}+\hat{\eta} z'-z'{}^{2}}{2}},
\vspace{10pt}
\\
z'_{t'x'x'}=3\,z'\,z'_{x'}-z'\,z'_{x'x'x'}-2\,z'_{x'}\,z'_{x'x'}+z'_{t'}.
\end{array}
\right .
\end{equation*}
 \end{example}

\begin{example}
\label{exmp33}
We consider here the Harry-Dym equation
$\mathcal{E}=\{z_{t}=z^{3}z_{xxx}\}$ and show how the invariants of a nonlocal
pseudosymmetry $\bar{Y}$ can be used to determine an auto-B\"{a}cklund
transformation of this equation. To this end we consider the following
ZCR
\begin{equation*}
\alpha =\left ({\displaystyle
\begin{array}{l@{\quad}l}
0\qquad & 4\eta
\\
-\frac{\eta}{z^{2}} & 0
\end{array}
}\right )dx+\left ({\displaystyle
\begin{array}{l@{\quad}l}
8\eta ^{2}z_{x}\qquad & -64\eta ^{3}z
\\
2\eta z_{xx}+\frac{16\eta ^{3}}{z}\qquad & -8\eta ^{2}z_{x}
\end{array}
}\right )dt,\qquad \eta \in \mathbb{R}-\{0\}.
\end{equation*}
In this case, the procedure described in Section~\ref{subsec:Riccati-type-differentiable-cove}, with
$\rho =v^{2}/v^{1}$, provides the $1$-dimensional differentiable covering
$\mathcal{Y}^{(\infty )}\rightarrow \mathcal{E}^{(\infty )}$ defined by
\begin{equation*}
\mathcal{Y}:\;\left \{
\begin{array}{l}
{\displaystyle \rho _{x}=-4\eta \rho ^{2}-\frac{\eta}{z^{2}}},
\vspace{5pt}
\\
{\displaystyle \rho _{t}=64\eta ^{3}\,z\,\rho ^{2}-16\eta ^{2}z_{x}
\rho +2\eta z_{xx}+\frac{16\,\eta ^{3}}{z}},
\vspace{5pt}
\\
z_{t}=z^{3}z_{xxx},
\end{array}
\right .
\end{equation*}
together with the nonlocal conservation law
\begin{equation*}
\mu =4\,\eta \,\rho \,dx+\left (-64\eta ^{3}\,\rho \,z+8\eta ^{2}\,z_{x}
\right )\,dt.
\end{equation*}
We are searching for a regular $\mathcal{C}$-morphism from
$\mathcal{Y}$ to $\mathcal{E}'$, with $\mathcal{E}'$ being a copy of
$\mathcal{E}$, defined by a system
$\{x'=\xi ^{1},\;t'=\xi ^{2},\;z'=\nu \}$ such that
$\xi ^{1},\xi ^{2},\nu $ are invariants of a pseudosymmetry $Y$ of
$\mathcal{Y}$.

First, by searching for pseudosymmetries
$\bar{Y}:=\left .Y\right |_{\mathcal{Y}^{(\infty )}}$ of
$\mathcal{Y}$ determined by the pseudoprolongations $Y$ of vector fields
of the form
\begin{equation*}
a_{1}(x,t,z,\rho )\partial _{x}+a_{2}(x,t,z,\rho )\partial _{t}+b^{1}(x,t,z,
\rho )\partial _{z}+b^{2}(x,t,z,\rho )\partial _{\rho},
\end{equation*}
relatively to conservation laws $c\,\mu $, with
$c\in \mathbb{R}-\{0\}$, one finds that the most general pseudosymmetry
of this type with $b^{1}\neq 0$ is a constant multiple of
\begin{equation*}
\bar{Y}=\frac{1}{8\eta}\partial _{x}+z\,\rho \,\partial _{z}-
\frac{\rho ^{2}}{2}\,\partial _{\rho}+\ldots \,,
\end{equation*}
with $c=2$. Such a pseudosymmetry of $\mathcal{Y}$ can be seen as a nonlocal
pseudosymmetry of $\mathcal{E}$, with respect to the differentiable covering
defined by $\mathcal{Y}$. Thus, being
$\{x-1/(4\eta \rho ),\,t,\,z\,\rho ^{2}\}$ a basic system of zeroth order
invariants of $\bar{Y}$, one is naturally lead to consider
$\xi ^{1}=x-1/(4\eta \rho ),\,\xi ^{2}=t$ and taking
$\nu =-1/(4z\rho ^{2})$ one obtains $z'_{t'}-z'{}^{3}z'_{x'x'x'}=0$, whenever
$\{z,\rho \}$ is a solution of $\mathcal{Y}$. Hence, one has the auto-B\"{a}cklund
transformation of $\mathcal{E}$
\begin{equation*}
x'=x-\frac{1}{4\eta \rho},\qquad t'=t,\qquad z'=-
\frac{1}{4z\rho ^{2}},
\end{equation*}
provided by the invariants of the pseudosymmetry $\bar{Y}$.
\end{example}

\begin{example}
\label{exmp34}
We consider here the following modification of Harry-Dym (mHD) equation
%
\begin{equation}
\mathcal{E}=\left \{ z_{t}=z^{3}z_{xxx}+\left (-4\,k^{2}z^{3}-
\frac{12a^{2}}{z}+b\right )z_{x}\right \}
\label{eq:mHD}
\end{equation}
where $a,b,k\in \mathbb{R}$. This is a new integrable equation that includes,
as a particular instance, the equation recently studied in
\cite{GengLiXue,TanWu}. Here we show how the invariants of a nonlocal pseudosymmetry
$\bar{Y}$ can be used to determine an auto-B\"{a}cklund transformation
of t{(\ref{eq:mHD})}. To this end we consider the following ZCR of t{(\ref{eq:mHD})}
\begin{equation*}
\alpha =\left (
\begin{array}{l@{\quad}l}
{\displaystyle \frac{P}{z^{2}}}\qquad & 4\eta
\vspace{5pt}
\\
-{\displaystyle \frac{\eta}{z^{2}}} & {\displaystyle -\frac{P}{z^{2}}}
\end{array}
\right )dx+\left (
\begin{array}{l@{\quad}l}
-2az_{xx}+8\eta ^{2}z_{x}+{\displaystyle \frac{PQ}{z^{3}}}\qquad & {
\displaystyle \frac{4\eta \left (4azz_{x}+Q\right )}{z}}
\vspace{5pt}
\\
2\eta z_{xx}-4\eta kz_{x}-{\displaystyle \frac{\eta Q}{z^{3}}} & 2az_{xx}-8
\eta ^{2}z_{x}-{\displaystyle \frac{PQ}{z^{3}}}
\end{array}
\right )dt,
\end{equation*}
where $\eta \in \mathbb{R}-\{0\}$ and
\begin{equation*}
P:=k\,z^{2}+a,\qquad Q:=8ak\,z^{2}-16\eta ^{2}z^{2}-8a^{2}+bz.
\end{equation*}
In this case, the procedure described in Section~\ref{subsec:Riccati-type-differentiable-cove}, with
$\rho =v^{2}/v^{1}$, provides the $1$-dimensional differentiable covering
$\mathcal{Y}^{(\infty )}\rightarrow \mathcal{E}^{(\infty )}$ defined by
\begin{equation*}
\mathcal{Y}:\;\left \{
\begin{array}{l}
{\displaystyle \rho _{x}=-4\eta \rho ^{2}+\left (-2k-\frac{2a}{z^{2}}
\right )\rho -\frac{\eta}{z^{2}}},
\vspace{5pt}
\\
{\displaystyle \rho _{t}=\left (\left (-32\eta ak+64\eta ^{3}\right )z-16
\eta a\,z_{x}-4\eta b+\frac{32\eta \,a^{2}}{z}\right )\rho ^{2}}
\vspace{5pt}
\\
\qquad +\left (\left (-16a\,k^{2}+32\eta ^{2}k\right )z-16\eta ^{2}z_{x}+4az_{xx}-2bk+{
\displaystyle \frac{32a\,\eta ^{2}}{z}-\frac{2ab}{z^{2}}+
\frac{16a^{3}}{z^{3}}}\right )\rho
\vspace{5pt}
\\
\qquad -4\eta kz_{x}+2\eta z_{xx}+{\displaystyle
\frac{-8a\eta k+16\eta ^{3}}{z}-\frac{b\eta}{z^{2}}+
\frac{8\eta \,a^{2}}{z^{3}}}
\vspace{5pt}
\\
z_{t}=z^{3}z_{xxx}+\left (-4\,k^{2}z^{3}-{\displaystyle
\frac{12a^{2}}{z}}+b\right )z_{x},
\end{array}
\right .
\end{equation*}
together with the nonlocal conservation law
\begin{eqnarray*}
\mu &= & \left (4\eta \rho +k+\frac{a}{z^{2}}\right )\,dx+\left (
\left (-32\left (-ak+2\eta ^{2}\right )\eta z+16\eta az_{x}+4\eta b-{
\displaystyle \frac{32\eta \,a^{2}}{z}}\right )\rho\right .
\vspace{5pt}
\\
&&\left . {}-8\left (-ak{+}2
\eta ^{2}\right )kz {+}8\eta ^{2}z_{x}-2az_{xx}{+}bk{+}{\displaystyle
\frac{-8a^{2}k-8\left (-ak{+}2\eta ^{2}\right )a}{z}{+}\frac{ab}{z^{2}}-
\frac{8a^{3}}{z^{3}}}\right )\,dt.
\end{eqnarray*}
We are searching for a regular $\mathcal{C}$-morphism from
$\mathcal{Y}$ to $\mathcal{E}'$, with $\mathcal{E}'$ being a copy of
$\mathcal{E}$, defined by a system
$\{x'=\xi ^{1},\;t'=\xi ^{2},\;z'=\nu \}$ such that
$\xi ^{1},\xi ^{2},\nu $ are invariants of a pseudosymmetry $Y$ of
$\mathcal{Y}$.

First, by searching for pseudosymmetries
$\bar{Y}:=\left .Y\right |_{\mathcal{Y}^{(\infty )}}$ of
$\mathcal{Y}$ determined by the pseudoprolongations $Y$ of vector fields
of the form
\begin{equation*}
a_{1}(x,t,z,\rho )\partial _{x}+a_{2}(x,t,z,\rho )\partial _{t}+b^{1}(x,t,z,
\rho )\partial _{z}+b^{2}(x,t,z,\rho )\partial _{\rho},
\end{equation*}
relatively to conservation laws $c\,\mu $, with
$c\in \mathbb{R}-\{0\}$, one finds that the most general pseudosymmetry
of this type with $b^{1}\neq 0$ is a constant multiple of
\begin{equation*}
\bar{Y}=\left (\frac{a\rho}{4\eta ^{2}}+\frac{1}{8\eta}\right )
\partial _{x}+\left (\frac{a\rho ^{2}}{\eta}+\rho +\frac{k}{4\eta}
\right )z\partial _{z}+
\frac{\left (-2a\rho -\eta \right )\left (2\eta \rho +k\right )\rho}{4\eta ^{2}}
\partial _{\rho}+\ldots \,,
\end{equation*}
with $c=2$. Such a pseudosymmetry of $\mathcal{Y}$ can be seen as a nonlocal
pseudosymmetry of $\mathcal{E}$, with respect to the differentiable covering
defined by $\mathcal{Y}$.

From now on, it is convenient to distinguish the case where $k$ is zero
from that where $k$ is non-zero. In fact, up to rescaling $z$ and
$x$, one can always reduce oneself to the cases: \textbf{(i)} $k=0$;
\textbf{(ii)} $k=1$. In particular, case (i) could be further subdivided
into subcases $b=0$ and $b=1$, although we will not do so here.

\textbf{Case (i).} In this case, being
$\{x-1/(4\eta \rho ),\,t,\,\eta z\,\rho ^{2}/(2a\rho +\eta )\}$ a basic
system of zeroth order invariants of $\bar{Y}$, one is naturally lead to
consider $\xi ^{1}=x-1/(4\eta \rho ),\,\xi ^{2}=t$ and taking
$\nu =-(2a\rho +\eta )/(4\eta z\,\rho ^{2})$ one obtains
$z'_{t'}-z'{}^{3}z'_{x'x'x'}+\left (12a^{2}/z'-b\right )z'_{x'}=0$, whenever
$\{z,\rho \}$ is a solution of $\mathcal{Y}$. Hence, one has the auto-B\"{a}cklund
transformation of $\mathcal{E}$
\begin{equation*}
x'=x-\frac{1}{4\eta \rho},\qquad t'=t,\qquad z'=-
\frac{2a\rho +\eta}{4\eta z\,\rho ^{2}},
\end{equation*}
provided by the invariants of the pseudosymmetry $\bar{Y}$, when
$k=0$.

\textbf{Case (ii).} In this case, being
$\{x-\ln ((2\eta \rho +1)/\rho )/2,\,t,\,z(2\eta \rho ^{2}+\rho )/(2a
\rho +\eta )\}$ a basic system of zeroth order invariants of
$\bar{Y}$, one is naturally lead to consider
$\xi ^{1}=x-\ln ((2\eta \rho +1)/\rho )/2,\,\xi ^{2}=t$ and taking
$\nu =-(2a\rho +\eta )/(2z(2\eta \rho ^{2}+\rho ))$ one obtains
$z'_{t'}-z'{}^{3}z'_{x'x'x'}+\left (4z'{}^{3}+12a^{2}/z'-b\right )z'_{x'}=0$,
whenever $\{z,\rho \}$ is a solution of $\mathcal{Y}$. Hence, one has the
auto-B\"{a}cklund transformation of $\mathcal{E}$
\begin{equation*}
x'=x-\frac{1}{2}\ln \left (\frac{2\eta \rho +1}{\rho}\right ),\qquad t'=t,
\qquad z'=-\frac{2a\rho +\eta}{z(4\eta \rho ^{2}+2\rho )},
\end{equation*}
provided by the invariants of the pseudosymmetry $\bar{Y}$, when
$k=1$.
\end{example}

\begin{example}
\label{exmp35}
We consider here the coupled Schr\"{o}dinger system
$\mathcal{E}=\{z_{t}^{1}=2\left (z^{1}\right )^{2}z^{2}+z_{xx}^{1},\,z_{t}^{2}=-2z^{1}
\left (z^{2}\right )^{2}-z_{xx}^{2}\}$ and show how the invariants of a
nonlocal pseudosymmetry $\bar{Y}$ can be used to determine an auto-B\"{a}cklund
transformation of $\mathcal{E}$. To this end we consider the following
ZCR
\begin{equation*}
\alpha =\left (
\begin{array}{c@{\quad}c}
\eta \qquad & -z^{2}
\vspace{5pt}
\\
z^{1} & -\eta
\end{array}
\right )dx+\left (
\begin{array}{c@{\quad}c}
-2\eta ^{2}-z^{1}\,z^{2}\qquad & 2\eta z^{2}+z_{x}^{2}
\vspace{5pt}
\\
-2\eta z^{1}+z_{x}^{1} & 2\eta ^{2}+z^{1}z^{2}
\end{array}
\right )dt,\qquad \eta \in \mathbb{R}.
\end{equation*}
In this case, the procedure described in Section~\ref{subsec:Riccati-type-differentiable-cove}, with
$\rho =v^{2}/v^{1}$, provides the $1$-dimensional differentiable covering
defined by the system
\begin{equation*}
\left \{
\begin{array}{l}
\rho _{x}=z^{2}\rho ^{2}-2\eta \rho +z^{1},
\vspace{5pt}
\\
\rho _{t}=\left (-2\eta \,z^{2}-z_{x}^{2}\right )\rho ^{2}+\left (2z^{1}z^{2}+4
\eta ^{2}\right )\rho -2\eta z^{1}+z_{x}^{1},
\vspace{5pt}
\\
z_{t}^{1}=2\left (z^{1}\right )^{2}z^{2}+z_{xx}^{1},
\vspace{5pt}
\\
\,z_{t}^{2}=-2z^{1}\left (z^{2}\right )^{2}-z_{xx}^{2},
\end{array}
\right .
\end{equation*}
together with the nonlocal conservation law
\begin{equation*}
\mu =\left (-z^{2}\rho +\eta \right )\,dx+\left (2\eta z^{2}\rho -z^{1}z^{2}+z_{x}^{2}
\rho -2\eta ^{2}\right )\,dt.
\end{equation*}
Here we double above covering, by considering the covering
$\mathcal{Y}^{(\infty )}\rightarrow \mathcal{E}^{(\infty )}$ defined by
the system
\begin{equation*}
\mathcal{Y}:\;\left \{
\begin{array}{l}
\hat{\rho}_{x}=z^{2}\hat{\rho}^{2}-2\hat{\eta}\hat{\rho}+z^{1},
\vspace{5pt}
\\
\hat{\rho}_{t}=\left (-2\hat{\eta}\,z^{2}-z_{x}^{2}\right )\hat{\rho}^{2}+
\left (2z^{1}z^{2}+4\hat{\eta}^{2}\right )\hat{\rho}-2\hat{\eta}z^{1}+z_{x}^{1},
\vspace{5pt}
\\
\rho _{x}=z^{2}\rho ^{2}-2\eta \rho +z^{1},
\vspace{5pt}
\\
\rho _{t}=\left (-2\eta \,z^{2}-z_{x}^{2}\right )\rho ^{2}+\left (2z^{1}z^{2}+4
\eta ^{2}\right )\rho -2\eta z^{1}+z_{x}^{1},
\vspace{5pt}
\\
z_{t}^{1}=2\left (z^{1}\right )^{2}z^{2}+z_{xx}^{1},
\vspace{5pt}
\\
\,z_{t}^{2}=-2z^{1}\left (z^{2}\right )^{2}-z_{xx}^{2}.
\end{array}
\right .
\end{equation*}
Hence, being $\mu $ still a conservation law for $\mathcal{Y}$, one can
search for pseudosymmetries
$\bar{Y}:=\left .Y\right |_{\mathcal{Y}^{(\infty )}}$ of
$\mathcal{Y}$ determined by the pseudoprolongations $Y$ of vector fields
of the form
\begin{eqnarray*}[ll]
a_{1}(x,t,z^{1},z^{2},\rho ,\hat{\rho})\partial _{x}+a_{2}(x,t,z^{1},z^{2},
\rho ,\hat{\rho})\partial _{t}+b^{11}(x,t,z^{1},z^{2},\rho ,
\hat{\rho})\partial _{z^{1}}\\
\qquad +b^{12}(x,t,z^{1},z^{2},\rho ,\hat{\rho})
\partial _{z^{2}}
+b^{21}(x,t,z^{1},z^{2},\rho ,\hat{\rho})\partial _{\rho},+b^{22}(x,t,z^{1},z^{2},
\rho ,\hat{\rho})\partial _{\hat{\rho}}\;,
\end{eqnarray*}
relatively to $c\,\mu $, with $c\in \mathbb{R}-\{0\}$. In this case, when
$\hat{\eta}\neq \eta $ and $\hat{\rho}\neq \rho $, one finds that the most
general pseudosymmetry of this type with $b^{11}b^{12}\neq 0$ is a constant
multiple of
\begin{equation*}
\bar{Y}=\rho ^{2}\,\partial _{z^{1}}-\partial _{z^{2}}-
\frac{\left (\rho -\hat{\rho}\right )^{2}}{2(\eta -\hat{\eta})}
\partial _{\hat{\rho}}+\ldots \,,
\end{equation*}
with $c=2$. Such a pseudosymmetry of $\mathcal{Y}$ can be seen as a nonlocal
pseudosymmetry of $\mathcal{E}$, with respect to the differentiable covering
defined by $\mathcal{Y}$. Thus, being
\begin{equation*}
\xi ^{1}=x,\quad \xi ^{2}=t,\quad \nu ^{1}=
\frac{((2\eta -2\hat{\eta})\rho \hat{\rho}+z^{1}\left (\rho -\hat{\rho}\right )}{\rho -\hat{\rho}},
\quad \nu ^{2}=
\frac{(\rho -\hat{\rho})z^{2}-2\eta +2\hat{\eta}}{\rho -\hat{\rho}},
\quad \nu ^{3}=\rho ,
\end{equation*}
a basic system of zeroth-order invariants of $\bar{Y}$, one finds that
in this case
%
\begin{equation}
x'=x,\qquad t'=t,\qquad z^{1}{}'=
\frac{((2\eta -2\hat{\eta})\rho \,\hat{\rho}+z^{1}\left (\rho -\hat{\rho}\right )}{\rho -\hat{\rho}},
\qquad z^{2}{}'=
\frac{\left (\rho -\hat{\rho}\right )z^{2}-2\eta +2\hat{\eta}}{\rho -\hat{\rho}},
\label{eq:ABT-CH-1-1}
\end{equation}
defines a regular $\mathcal{C}$-morphism from $\mathcal{Y}$ to
$\mathcal{E }$.
\end{example}

\begin{example}
\label{exa:Tzitzeica2}%
We consider here Tzitzeica equation
$\mathcal{E}=\{z_{xt}=-{{\mathrm{e}}^{-2\,z}}+{{\mathrm{e}}^{z}}\}$. As already shown
in t{Example~\ref{exa:Tzitzeica1}}, $\mathcal{E}$ admits the nonlocal conservation
law
$\mu =\lambda{\rho ^{2}}\,dx+\frac{1}{\lambda}e^{z}{\rho ^{1}}\,dt$ in the $1$-dimensional differentiable covering
$\mathcal{Y}^{(\infty )}\rightarrow \mathcal{E}^{(\infty )}$ defined by
\begin{equation*}
\mathcal{Y}:\;\left \{
\begin{array}{l}
\rho _{x}^{1}=-\lambda \,\rho ^{1}\rho ^{2}-z_{x}\,\rho ^{1}+\lambda ,
\vspace{5pt}
\\
{\displaystyle \rho _{t}^{1}=-\frac{1}{\lambda}{{\mathrm{e}}^{z}}{\left (
\rho ^{1}\right )}^{2}+\frac{1}{\lambda}e^{-2z}\rho ^{2}},
\vspace{5pt}
\\
\rho _{x}^{2}=-\lambda{\left (\rho ^{2}\right )}^{2}+z_{x}{\rho ^{2}}+
\lambda \rho ^{1},
\vspace{5pt}
\\
{\displaystyle \rho _{t}^{2}=-\frac{1}{\lambda}{{\mathrm{e}}^{z}}{\rho ^{1}}{
\rho ^{2}}+\frac{1}{\lambda}e^{z}},
\vspace{5pt}
\\
{\displaystyle z_{xt}=-{{\mathrm{e}}^{-2\,z}}+{{\mathrm{e}}^{z}}}.
\end{array}
\right .
\end{equation*}
Contrary to previous examples, in this case, we did not find any nontrivial
pseudosymmetry, which would be a pseudoprolongation of some vector field
relative to a multiple of $\mu $. However, we found that
$\mathcal{Y}$ admits a $2$-pseudosymmetry given by
\begin{equation*}
\bar{\mathbb{Y}}=\left .\mathbb{Y}\right |_{\mathcal{Y}^{(\infty )}}=
\left [\partial _{z}-\frac{2\rho ^{1}\rho ^{2}-1}{2\rho ^{1}}
\partial _{\rho ^{2}}+\ldots \;,\quad \partial _{\rho ^{1}}-
\frac{\rho ^{2}}{\rho ^{1}}\partial _{\rho ^{2}}+\ldots \right ]^{T},
\end{equation*}
induced on $\mathcal{Y}^{(\infty )}$ by a pseudoprolongation
$\mathbb{Y}$ relative to the horizontal $1$-form
\begin{equation*}
\begin{array}{l}
\gamma {=}\left ({\displaystyle
\begin{array}{c@{\quad}c}
{-}{\displaystyle
\frac{4\lambda \,(\rho ^{1}\rho ^{2})^{2}{+}2\lambda \,(\rho ^{1})^{3}{-}4\lambda \,\rho ^{1}\rho ^{2}{+}\lambda \,}{2\rho ^{2}(\rho ^{1})^{2}{-}\rho ^{1}}}
\vspace{10pt}
& {\displaystyle
\frac{12\lambda \,(\rho ^{1}\rho ^{2})^{2}{+}4\lambda \,(\rho ^{1})^{3}{-}12\lambda \,\rho ^{1}\rho ^{2}{+}3\lambda}{4\rho ^{1}\rho ^{2}{-}2}}
\\
{-}{\displaystyle
\frac{4\lambda \,\rho ^{1}(\rho ^{2})^{2}{+}4\lambda \,(\rho ^{1})^{2}{-}2\lambda \,\rho ^{2}}{2\rho ^{2}(\rho ^{1})^{2}{-}\rho ^{1}}}
\qquad & {\displaystyle
\frac{4\lambda (\rho ^{1})^{2}{+}4\lambda \rho ^{1}(\rho ^{2})^{2}{-}2z_{x}\rho ^{1}\rho ^{2}{-}2\lambda \rho ^{2}{+}z_{x}}{2\rho ^{1}\rho ^{2}{-}1}}
\end{array}
}\right )dx
\vspace{15pt}
\\
\qquad {+}\!\left (\!{\displaystyle
\begin{array}{c@{\quad}c}
{\displaystyle
\frac{4{\mathrm{e}}^{z}(\rho ^{1})^{3}\rho ^{2}{-}2{\mathrm{e}}^{z}(\rho ^{1})^{2}{+}6\rho ^{1}\,(\rho ^{2})^{2}{\mathrm{e}}^{{-}2\,z}{-}2\rho ^{2}{\mathrm{e}}^{{-}2\,z}}{\lambda \,\left (2\rho ^{2}(\rho ^{1})^{2}{-}\rho ^{1}\right )}}
\vspace{10pt}
& {-}{\displaystyle \frac{{\mathrm{e}}^{z}(\rho ^{1})^{2}}{\lambda}{-}3
\frac{\rho ^{2}{\mathrm{e}}^{{-}2z}}{\lambda}{+}
\frac{{\mathrm{e}}^{{-}2z}}{2\lambda \rho ^{1}}}
\\
{\displaystyle
\frac{4{\mathrm{e}}^{z}(\rho ^{1})^{2}\rho ^{2}{-}2\,{\mathrm{e}}^{z}\rho ^{1}{+}4(\rho ^{2})^{2}{\mathrm{e}}^{{-}2z}}{\lambda \,\left (2\rho ^{2}(\rho ^{1})^{2}{-}\rho ^{1}\right )}}
\qquad & {-}{\displaystyle 2\frac{{\mathrm{e}}^{z}\rho ^{1}}{\lambda}{-}
\frac{\rho ^{2}\,{\mathrm{e}}^{{-}2z}}{\lambda \rho ^{1}}}
\end{array}
}\! \right )dt,
\end{array}
\end{equation*}
satisfying
$d_{H}\gamma +\gamma \wedge \gamma =0\mod{\mathcal{Y}^{(\infty )}}$. Such
a $2$-pseudosymmetry of $\mathcal{Y}$ can be seen as a nonlocal $2$-pseudosymmetry
of $\mathcal{E}$, with respect to the differentiable covering defined by
$\mathcal{Y}$.

We are searching for a regular $\mathcal{C}$-morphism from
$\mathcal{Y}$ to $\mathcal{E}'$, with $\mathcal{E}'$ being a copy of
$\mathcal{E}$, defined by a system
$\{x'=\xi ^{1},\;t'=\xi ^{2},\;z'=\nu \}$ such that
$\xi ^{1},\xi ^{2},\nu $ are invariants of the $2$-pseudosymmetry given
by $\bar{\mathbb{Y}}$. Thus, being
$\{x,\,t,\,z+\ln (2\rho ^{1}\rho ^{2}-1)\}$ a basic system of zeroth order
invariants of $\bar{\mathbb{Y}}$, one is naturally lead to consider
$\xi ^{1}=x,\,\xi ^{2}=t$ and taking
$\nu =z+\ln (2\rho ^{1}\rho ^{2}-1)$ one obtains
$z'_{x't'}+{{\mathrm{e}}^{-2\,z'}}-{{\mathrm{e}}^{z'}}=0$, whenever
$\{z,\rho ^{1},\rho ^{2}\}$ is a solution of
$\mathcal{Y }$. Hence, one has the auto-B\"{a}cklund transformation
of $\mathcal{E}$
\begin{equation*}
x'=x,\qquad t'=t,\qquad z'=z+\ln (2\rho ^{1}\rho ^{2}-1),
\end{equation*}
provided by the invariants of the $2$-pseudosymmetry given by
$\bar{\mathbb{Y}}$. This auto-B\"{a}cklund transformation coincides with
that found in \cite{Brez}, by means of a different approach.
\end{example}










%

\section{Acknowledgments}
The authors acknowledge the Coordination for the Improvement of Higher Education Personnel - Brazil (CAPES) for financial support of the Article Processing Charge (APC) through the CAPES-Elsevier transformative agreement.

\medskip{}
\medskip{}

\medskip{}
\medskip{}
\medskip{}


\begin{thebibliography}{10}
\bibitem{Ablowitz-Segur}M. J. Ablowitz and H. Segur, Solitons and
the Inverse Scattering Transform, SIAM (1981)

\bibitem{AndFel2015}I. Anderson and M. Fels, Bäcklund transformations
for Darboux integrable differential systems, Sel. Math., New Ser.,
\textbf{21} (2015), 379-448

\bibitem{AndFel2016}I. Anderson and M. Fels, Bäcklund transformations
for Darboux integrable differential systems: examples and applications,
J. Geom. Phys., \textbf{102} (2016), 1-31

\bibitem{And-Ibr}R. L. Anderson and N. H. Ibragimov, Lie-Bäcklund
Transformations in Applications, SIAM (1979)

\bibitem{Backlund1}A. V. Bäcklund, Einiges über Curven-und Flächen-Transformationen.
Lunds Universitets Års-skrift X För Ar (1873). II, Afdelingen for
Mathematik och Naturvetenskap (1873--74), pp. 1-12

\bibitem{Backlund2}A. V. Bäcklund, Ueber Flächentransformationen,
Mathematische Annalen, IX (1876), S. 297--320

\bibitem{BG}R. L. Bishop and S. I. Goldberg, Tensor Analysis on Manifolds,
Dover (1980)

\bibitem{B_KrV}A.V. Bocharov, V.N. Chetverikov, S.V. Duzhin, N.G.
Khor'kova, A.V. Samokhin, Yu.N. Torkhov, A.M. Verbovetsky, in: I.S.
Krasil'shchik, A.M. Vinogradov (Eds.), Symmetries and Conservation
Laws for Differential Equations of Mathematical Physics, AMS, Providence,
RI, 1999

\bibitem{Brez}Y. V. Brezhnev, Darboux transformation and some multi-phase
solutions of the Dodd-Bullogh-Tzitzeica equation: $U_{xt}=e^{U}-e^{-2U}$,
Physics Letters A, \textbf{211} (1996), 94-100

\bibitem{Diego-lambda}D. Catalano Ferraioli, Nonlocal aspects of
$\lambda$-symmetries and ODEs reduction, J. Phys. A: Math. Theor.,
\textbf{40} (2007), 5479-5489

\bibitem{Diego-Paola}D. Catalano Ferraioli, P. Morando, Local and
nonlocal solvable structures in the reduction of ODEs, J. Phys. A:
Math. Theor., \textbf{42} (2009), 035210

\bibitem{DiegoLuiz}D. Catalano Ferraioli, L. A. de Oliveira Silva,
Nontrivial $1$-parameter families of zero-curvature representations
obtained via symmetry actions, J. Geom. Phys.,\textbf{ 94 }(2015),
185-198

\bibitem{Diego-Giuseppe}D. Catalano Ferraioli, G. Gaeta, On the geometry
of twisted symmetries: Gauging and coverings, J. Geom. Phys., \textbf{151}
(2020), 103620

\bibitem{CHZ}Chaohao Gu, Hesheng Hu and Zixiang Zhou, Darboux Transformations
in Integrable Systems, Springer (2005)

\bibitem{CT} S. S. Chern and K. Tenenblat, Pseudospherical surfaces
and evolution equations, Stud. Appl. Math., \textbf{74} (1986), 55-83

\bibitem{Chet}V. N. Chetverikov, Coverings and Integrable Pseudosymmetries
of Differential Equations, Differential Equations, \textbf{53} (2017),
1461-1472 

\bibitem{Chet-2}V. N. Chetverikov, Coverings and multivector pseudosymmetries
of differential equations, Differ. Geom. Appl., \textbf{74} (2021),
101705

\bibitem{Clelland2002}J. N. Clelland, Homogeneous Bäcklund transformations
of hyperbolic Monge--Ampère systems, Asian J. Math., \textbf{6} (2002),
433--480

\bibitem{Clelland2009} J. N. Clelland and T. A. Ivey, Bäcklund transformations
and Darboux integrability for nonlinear wave equations, Asian J. Math.,
\textbf{13} (2009), 15--64

\bibitem{Clelland2018}J. N. Clelland and T. A. Ivey, Geometric characterization
and classification of Bäcklund transformations of sine-Gordon type,
J. Integrable Syst., \textbf{3} (2018), 1--44

\bibitem{Dun}M. Dunajski, Solitons, Instantons, and Twistors, Oxford
University Press (2024)

\bibitem{GengLiXue}X. G. Geng, R. Li, B. Xue, A new integrable equation
with peakons and cuspons and its bi-Hamiltonian structure, Appl. Math.
Lett., \textbf{46} (2015), 64--69


\bibitem{Ibragimov}N. H. Ibragimov, Lie group analysis. Classical
heritage, ALGA Publications (2004)

\bibitem{JN}Jet Nestruev, Smooth Manifolds and Observables, Springer
(2020)

\bibitem{KLV}I.S. Krasil'shchik, V.V. Lychagin, A.M. Vinogradov,
Geometry of Jet Spaces and Nonlinear Partial Differential Equations,
Gordon and Breach (1986)

\bibitem{Kras-Vin}I. S. Krasil'shchik and A. M. Vinogradov, Nonlocal
Trends in the Geometry of Differential Equations: Symmetries, Conservation
Laws, and Bäcklund Transformations, Acta Appl. Math., \textbf{15}
(1989), 161-209

\bibitem{Lab74}G. Lamb, Bäcklund transformations for certain nonlinear
evolution equations, J. Nonlin. Math. Phys., \textbf{15} (1974), 2157-2165

\bibitem{Lamb80}G. Lamb, Elements of Soliton Theory, Wiley (1980)

\bibitem{Lax}P. D. Lax, Integrals of nonlinear equations of evolution
and solitary waves. Comm. Pure and Appl. Math., \textbf{21} (1968),
467-490 

\bibitem{Lie}S. Lie, Begründung einer Invariantentheorie der Berührungstransformationen,
Math. Annalen, \textbf{8} (1874), 215-288

\bibitem{Marvan1}M. Marvan, On zero-curvature representations of
partial differential equations, in: Differential Geometry and its
Applications, Proc. Conf. Opava (Czechoslovakia, 24--28 August 1992),
Silesian University, Opava (1993), 103--122

\bibitem{Marvan}M. Marvan, Some local properties of Bäcklund transformations,
Acta Appl. Math., \textbf{54} (1998), 1-25

\bibitem{MatvSal}V.B. Matveev and M.A. Salle, Darboux transformations
and solitons, Springer (1991)

\bibitem{Miura-Gardner-Kruskal}R. Miura, C. S. Gardner and M. D.
Kruskal, Korteweg-de Vries Equation and Generalizations. II. Existence
of Conservation Laws and Constants of Motion, J. Math. Phys., \textbf{9}
(1968), 1204-1210

\bibitem{Muriel-Romero}C. Muriel and J.L. Romero, “New methods of
reduction for ordinary differential equations”, IMA J. Appl. Math.,
\textbf{66} (2001), 111-125

\bibitem{Nimmo} J. J. C. Nimmo and D. G. Crighton, Bäcklund transformations
for nonlinear parabolic equations: the general results, Proc. Roy.
Soc. London Ser. A, \textbf{384} (1982), 381--401

\bibitem{Tesi_Luiz}L. A. de Oliveira Silva, Some contributions to
the study of evolution equations describing pseudospherical surfaces,
and the theory of zero-curvature representations, Universidade Federal
da Bahia, Ph.D. Thesis (2015) 

\bibitem{olver}P. Olver, Applications of Lie groups to differential
equations, Springer (1983)

\bibitem{Pirani1}F. A. E. Pirani, D. C. Robinson and W. F. Shadwick,
Local Jet Bundle Formulation of Bäcklund Transformations, D. Reidel
Publishing Company (1979)

\bibitem{Rasin_Schiff}A. G. Rasin and J. Schiff, Bäcklund Transformations
for Camassa-Holm Equation, J. Nonlinear Sci., \textbf{27} (2017),
45-69

\bibitem{Rogers-Schief}C. Rogers and W. K. Schief, Bäcklund and Darboux
Transformations, Geometry and Modern Applications in Soliton Theory,
Cambridge (2002)

\bibitem{Sokolov}V. V. Sokolov, Pseudosymmetries and differential
substitutions, Funktsional. Anal. i Prilozhen., \textbf{22} (1988),
47--56 (in Russian)

\bibitem{Svino-Sok}V. V. Sokolov and S. I. Svinolupov, Factorization
of evolution equations, Russ. Math. Surv., \textbf{47} (1992), 47-127

\bibitem{Stormark}O. Stormark, Lie’s Structural Approach to PDE Systems,
Encyclopedia Math. Appl., Vol. 80, Cambridge Univ. Press, Cambridge,
UK (2000)

\bibitem{Taktajan}L. A. Takhtadzhyan and L. D. Faddeev, Hamiltonian
Methods in the Theory of Solitons, Springer (1987)

\bibitem{TanWu}F. Tan, L. Wu, On the Bäcklund transformation of a
generalizes Harry Dym type equation, Wave Motion, \textbf{120} (2023),
103162

\bibitem{Yuhao1}Yuhao Hu, Geometry of Bäcklund Transformations I:
Generality, Trans. Amer. Math. Soc., \textbf{373} (2019), 1-30

\bibitem{Yuhao2}Yuhao Hu, Geometry of Bäcklund transformations II:
Monge-Ampère invariants, J. Integrable Syst., \textbf{4} (2019), 1-46

\bibitem{Zakharov}V. E. Zakharov and A. B. Shabat, Integration of
nonlinear equations of mathematical physics by the method of inverse
scattering. II, Funktsional. Anal. i Prilozhen, \textbf{13}, 1979,
13--22 (in Russian); Funct. Anal. Appl., \textbf{13} (1980), 166--174
(English translation)

\bibitem{Zvyagin}M. Y. Zvyagin, Second order equations reducible
to $z_{xy}=0$ by a Bäcklund transformation, Soviet Math. Dokl., \textbf{43}
(1991), 30--34

\end{thebibliography}
\end{document}